\newtheorem{theorem}{Theorem}[section]
\newtheorem{corollary}[theorem]{Corollary}
\newtheorem{lemma}[theorem]{Lemma}
\newtheorem{definition}[theorem]{Definition}
\newtheorem{remark}[theorem]{Remark}
\numberwithin{equation}{section}
\newcommand{\eqnsection}{
	\renewcommand{\theequation}{\thesection.\arabic{equation}}
	\makeatletter   \csname  @addtoreset\endcsname{equation}{section}
	\makeatother}
\def\qed{$\Box $}
\def\R{\mathbb{R}}
\def\E{\mathbb{E}}
\def\P{\mathbb{P}}
\def\0{\mathbf{0}}
\def\1{\mathbf{1}}
\def\Var{{\mathop {{\rm Var\, }}}}
\def\Cov{{\mathop {{\rm Cov\, }}}}
\def\tr{{\mathop {{\rm Tr\, }}}}
\begin{document}
	
	\title{Hyperbolic Anderson equations with general time-independent
		Gaussian noise: Stratonovich regime}
	
	\author{\sc Xia Chen\thanks {XC is    supported by
			the Simons Foundation \#585506.},\quad  Yaozhong Hu\thanks {YH is supported by an NSERC discovery fund and
			a centennial fund of University of Alberta.} }
	
	\date{} 
	\maketitle
	
	\begin{abstract}
		In this paper, we investigate the 
		hyperbolic Anderson equation
		generated by a time-independent Gaussian noise
		with two objectives: The solvability and intermittency.
		First, we prove that   Dalang's condition is necessary and sufficient
		for existence  of the solution. Second, we establish the
		precise long time  and high moment asymptotics for the solution under
		the usual homogeneity assumption of the covariance of the Gaussian noise.
		Our approach is fundamentally different from the ones existing in
		literature.
		% for the same model.
		The main contributions  in our approach include the representation of Stratonovich
		moment under Laplace  transform via the  moments of the Brownian motions in
		Gaussian potentials
		and some large deviation skills developed
		in dealing effectively with the Stratonovich chaos expansion.

	\end{abstract}
	
	\begin{quote} {\footnotesize
			\underline{Key-words}:   Hyperbolic Anderson equation, Dalang's condition,
			rough and critical Gaussian noises, Stratonovich integrability, multiple Stratonovich integrals, Stratonovich expansion, Feynman-Kac's representation,  
			Brownian motion, moment asymptotics, intermittency.

			\underline{AMS subject classification (2010)}: 60F10, 60H15, 60H40, 
			60J65, 81U10.}

	\end{quote}

	\section{Introduction} \label{intro}

In this paper we consider the hyperbolic Anderson equation 
\begin{align}\label{intro-1}
	\left\{\begin{array}{ll}\displaystyle
		\frac{\partial^2 u}{\partial t^2}(t,x)=\Delta u(t,x)
		+ \dot{W}(x) 
		u(t,x)\,, \hskip.2in (t, x)\in \R^+\times\R^d\\\\
		u(0, x)=u_0(x)  \hskip.1in\hbox{and} \hskip.1in
		\frac{\partial u}{\partial t}(0,x)=u_1(x)\,, \hskip.2in x\in\R^d\end{array}\right.
\end{align}
run by a  time-independent, mean zero and possibly generalized
Gaussian noise $\dot{W}(x)$ %($x\in\R^d$) 
with
the covariance function
\begin{align}\label{intro-2}
	\Cov \big(\dot{W}(x), \dot{W}(y)\big)
	=\gamma(x-y)\,, \hskip.2in x,y\in\R^d\,. 
\end{align}
As a  covariance function the non-negative definiteness   of  $\gamma(\cdot)$   implies that it admits
a spectral measure $\mu(d\xi)$ on
$\R^d$ uniquely defined by the relation
\begin{align}\label{intro-3}
	\gamma(x)=\int_{\R^d} e^{i\xi\cdot x}\mu(d\xi)\,, \hskip.3in x\in\R^d\,. 
\end{align}

Throughout this work, we assume that $\gamma(\cdot)\ge 0$ and $d=1,2,3$. 
The system is set up in Stratonovich regime in the sense that the product 
in \eqref{intro-1} is  interpreted as the ordinary (instead of Wick)  one.
The equation    (\ref{intro-1}) will be  approximated appropriately by  
classical  wave equations run by the smoothed
Gaussian noise $\dot{W}_\varepsilon(x)$.  We shall provide the details of the construction of the solution 
in Section \ref{M}.

Our first concern is the condition to ensure the existence of  solution. It is often formulated
in terms of the  integrability of the spectral measure $\mu(d\xi)$.  
In the Skorohod regime, where
the product between
$\dot{W}(  x)$ and $u(t,x)$ in (\ref{intro-1}) is understood as Wick product,
the condition (\cite[Theorem 1.6]{BCC}, \cite[Remark 3.4]{CDST} )
that (\ref{intro-1}) has a unique solution
is 
\begin{align}\label{intro-4}
	\int_{\R^d}\bigg(\frac{1}{1+\vert\xi\vert^2}\bigg)^{3/2}\mu(d\xi)<\infty\,. 
\end{align}

Back to the Stratonovich regime and still in the time independent setting, Balan (\cite{Balan}) recently
proved that  in the dimensions $d=1,2$   Equation (\ref{intro-1}) has a  
solution if
\begin{align}\label{intro-5}
	\int_{\R^d}\bigg(\frac{1}{ 1+\vert\xi\vert^2}\bigg)^{1/2}\mu(d\xi)<\infty\,. 
\end{align}
In the setting of time-space Gaussian noise, Chen, Deya, Song and Tindel  (\cite{CDST-1})
establish the existence/uniqueness under a condition comparable to (\ref{intro-5}).

Our first main result is to obtain  the best condition for the existence of the solution, which is to remove the square root in \eqref{intro-5}. 
We can also allow the spatial dimension to be three as well. 
\begin{theorem}\label{th-1}  Let $d=1,2,3$ and assume that $u_0(x)=1$ and $u_1(x)=0$ in (\ref{intro-1}).
	\begin{enumerate}
		\item[(i)]   Under   Dalang's condition
		\begin{align}\label{intro-6} 
			\int_{\R^d}\frac{1}{1+\vert \xi\vert^2}\mu(d\xi)<\infty
		\end{align}  
		the equation (\ref{intro-1}) has a   solution in the sense of
		Definition \ref{d.mild_solution} given in Section \ref{M}.
		\item[(ii)] If the equation (\ref{intro-1}) has a  square integrable solution $u(t,x)$ that admits the  Stratonovich expansion (see (\ref{M-3}))
		for some $t>0$, then  Dalang's condition \eqref{intro-6}
		must be satisfied.  
	\end{enumerate}
\end{theorem}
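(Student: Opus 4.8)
The plan is to handle both directions through the Duhamel reformulation of \eqref{intro-1}. With $u_0\equiv 1$ and $u_1\equiv 0$ the noiseless equation has the constant solution $1$, so writing $G_t$ for the fundamental solution of $\partial_{tt}-\Delta$ (with $\widehat{G_t}(\xi)=\sin(t|\xi|)/|\xi|$ and, decisively for $d=1,2,3$, $G_t\ge 0$), the Stratonovich mild formulation reads
$$u(t,x)=1+\int_0^t\!\!\int_{\R^d}G_{t-s}(x-y)\,\dot W(y)\,u(s,y)\,dy\,ds.$$
Iterating produces the Stratonovich expansion $u=\sum_{n\ge 0}u_n$ with $u_0=1$ and $u_n$ an $n$-fold, time-ordered space-time integral of $\prod_j G_{s_{j-1}-s_j}(x_{j-1}-x_j)$ against $\prod_j\dot W(x_j)$ (with $s_0=t$, $x_0=x$). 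Both parts then reduce to the moments of these terms, which by Gaussianity are sums over pairings of products of $\gamma$; passing to the spectral side turns each $\gamma$ into $\mu(d\xi)$ and each $G$ into $\sin(t|\xi|)/|\xi|$.

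For the necessity statement (ii), suppose $u(t,\cdot)\in L^2(\Omega)$ coincides with its Stratonovich series. Taking expectations annihilates the odd terms, so $\E u=\sum_{m\ge 0}\E u_{2m}$, and since $\gamma\ge 0$ and $G\ge 0$---this is exactly where $d\le 3$ is used---every summand is nonnegative. As $\E u\le(\E u^2)^{1/2}<\infty$, each term is finite; in particular $\E u_2<\infty$. A direct Fourier computation gives
$$\E u_2=\int_0^t\!\!\int_{\R^d}(t-s)\,\frac{1-\cos(s|\xi|)}{|\xi|^2}\,\mu(d\xi)\,ds,$$
whose integrand is bounded near $\xi=0$ and, after averaging the cosine in $s$, is comparable to $(t-s)|\xi|^{-2}$ for $|\xi|$ large. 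Hence $\E u_2<\infty$ is equivalent to $\int_{\R^d}(1+|\xi|^2)^{-1}\mu(d\xi)<\infty$, i.e.\ to Dalang's condition \eqref{intro-6}.

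For sufficiency (i) I would first regularize the noise, replacing $\dot W$ by $\dot W_\varepsilon=\dot W*\phi_\varepsilon$ so that $\gamma_\varepsilon$ is bounded and $\mu_\varepsilon(d\xi)=|\widehat{\phi_\varepsilon}(\xi)|^2\mu(d\xi)$ increases to $\mu$ while staying dominated by $C\mu$; the associated classical wave equations have smooth solutions $u_\varepsilon=\sum_n u_{n,\varepsilon}$. Since $\gamma_\varepsilon\ge 0$ and $G\ge 0$, every pairing contribution to $\E u_\varepsilon(t,x)^2=\sum_{n,m}\E[u_{n,\varepsilon}u_{m,\varepsilon}]$ is nonnegative, so the family is monotone in $\varepsilon$ and it suffices to bound it for the limiting $\mu$. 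To do so I would take the Laplace transform in $t$ and use $\int_0^\infty e^{-\lambda t}\widehat{G_t}(\xi)\,dt=(\lambda^2+|\xi|^2)^{-1}$, so that each wave kernel becomes the resolvent symbol of $(\lambda^2-\Delta)^{-1}$, namely the Green's function of a Brownian motion killed at rate $\lambda^2$. The transformed second moment is thereby represented through the moments of two independent such Brownian motions interacting via the Gaussian potential $\gamma$, the governing one-step quantity being $\int_{\R^d}(\lambda^2+|\xi|^2)^{-1}\mu(d\xi)$, which under \eqref{intro-6} tends to $0$ as $\lambda\to\infty$ by dominated convergence. A large-deviation estimate for this Brownian-motion-in-Gaussian-potential functional then yields $\sup_\varepsilon\E u_\varepsilon(t,x)^2<\infty$ for every fixed $t$, and the analogous $p$-path version controls higher moments. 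The monotone nonnegative structure finally upgrades the uniform bounds to $L^2$-convergence of $u_\varepsilon(t,x)$, whose limit I would identify as the mild solution of Definition \ref{d.mild_solution}.

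The hard part is precisely this sufficiency estimate. Because the Stratonovich chaos is not orthogonal, the cross terms $\E[u_{n,\varepsilon}u_{m,\varepsilon}]$ proliferate, and turning the qualitative smallness of $\int_{\R^d}(\lambda^2+|\xi|^2)^{-1}\mu(d\xi)$ into a genuine, $\varepsilon$-uniform summability of the full moment series---under only Dalang's condition \eqref{intro-6} rather than the stronger $(1+|\xi|^2)^{-1/2}$-integrability used in earlier work---is the technical crux, for which the exponential-integrability (large deviation) control of the Brownian functional is indispensable. By contrast the necessity direction is comparatively soft, resting only on the explicit evaluation of $\E u_2$ together with the nonnegativity of the wave kernel in dimensions $d\le 3$.
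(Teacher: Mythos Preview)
Your proposal is correct and matches the paper's strategy closely: the necessity argument via the explicit evaluation of $\E u_2=\E S_2(g_2)$ is exactly what the paper does (it reaches the same integral $\int_0^t s_1\int_{\R^d}\frac{1-\cos(|\xi|(t-s_1))}{|\xi|^2}\mu(d\xi)\,ds_1$), and for sufficiency the paper likewise uses the Laplace identity $\int_0^\infty e^{-\lambda t}G(t,x)\,dt=\tfrac12\int_0^\infty e^{-\lambda^2 t/2}p(t,x)\,dt$ to pass to Brownian motion and then invokes the exponential-moment bound for the self-intersection local time under Dalang's condition (the large-deviation input you allude to, supplied by \cite{Chen-3}).

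Two tactical points where the paper's implementation differs from your sketch and which sharpen the argument: (1) rather than attacking $\E u_\varepsilon^2=\sum_{n,m}\E[u_{n,\varepsilon}u_{m,\varepsilon}]$ with its proliferating cross terms, the paper bounds each $\E\big[S_n(g_n(\cdot,t,0))\big]^2$ individually---obtaining $\E[S_n]^2\le C^n t^{2n+2}/n!$---and then sums $\sum_n\|S_n\|_2$ by the triangle inequality, which sidesteps the cross-term issue entirely; (2) the step you describe as ``monotone structure upgrades uniform bounds to $L^2$-convergence'' is not quite that simple---uniform $L^2$ bounds plus monotonicity of norms do not by themselves give Cauchy---and the paper instead proves convergence of each pairing function $F^{\cal D}_{\varepsilon,\varepsilon'}(t_1,t_2)$ via a continuity theorem for Laplace transforms (Lemma~\ref{int}), then assembles Stratonovich integrability (Theorem~\ref{prop}) and the Fubini identity (Lemma~\ref{L}) needed to verify the mild equation.
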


Roughly speaking, the system (\ref{intro-1}) in Stratonovich
regime can be viewed as a randomization of the deterministic wave
equation
\begin{align}\label{intro-7}
	\left\{\begin{array}{ll}\displaystyle
		\frac{\partial^2 u}{\partial t^2}(t,x)=\Delta u(t,x)
		+ f(x) 
		u(t,x)\,, \hskip.2in (t, x)\in \R^+\times\R^d\\\\
		u(0, x)=u_0(x)\hskip.1in\hbox{and} \hskip.1in
		\frac{\partial u}{\partial t}(0,x)=u_1(x)\,, \hskip.2in x\in\R^d\end{array}\right.
\end{align}
with a deterministic potential function $f(x)$ on $\R^d$.  To this regard,
it is hard not to notice
the stochastic representation constructed by Dalang, Mueller and Tribe (\cite{DMT}). We devote the subsection
\ref{S-3} below to address this link and to add some new elements to
the representation theory for wave equations.

Our next topic is the ittermittency of the  equation  (\ref{intro-1}). More precisely,
our concern is the asymptotic behaviors of the moments
$$
\E u^p(t,x)  \hskip.1in\hbox{and}\hskip.1in\E \vert u(t,x)\vert^p
$$
as $t\to\infty$ or as $p\to\infty$.

To this end, we assume the
homogeneity for the covariance structure:  
\begin{align}\label{intro-8} 
	\gamma(cx)=c^{-\alpha}\gamma(x)\,, \hskip.2in x\in\R^d\,, \hskip.1in c>0
\end{align} 
for some $\alpha>0$.
Taking $f(\lambda)=(1+\lambda^2)^{-1}$ and $v(d\xi)=\mu(d\xi)$ in   \cite[Lemma 3.10]{CDST}
yields 
$$
\int_{\R^d}\frac{1}{ 1+\vert\xi\vert^2}\mu(d\xi)=\alpha\mu\{\xi\in\R^d;\hskip.05in\vert\xi\vert\le 1\}
\int_0^\infty \frac{1}{ 1+\rho^2}\frac{d\rho}{\rho^{1-\alpha}}
$$
as far as either of the above two sides is finite.  This
shows that under the homogeneity (\ref{intro-8}) on the 
noise covariance condition,
Dalang's  condition (\ref{intro-6}) becomes   ``$\alpha<2$''.
In addition (Remark 1.4, \cite{CDST}), the fact that $\gamma(\cdot)$ is non-negative and
non-negative
definite (for being qualified as covariance function)
requires that $\alpha\le d$. Further, the only setting where
``$\alpha=d$'' is allowed
under $\alpha<2$ is when $\alpha=d=1$, or when $\gamma (\cdot)$ is a
constant multiple of Dirac function (i.e., $\dot{W}$ is
an 1-dimensional spatial white noise, see Corollary \ref{co-0} below for
inttermitency in this case).

\begin{theorem}\label{th-2} Under the homogeneity condition (\ref{intro-8})
	with $0<\alpha<2\wedge d$ or with $\alpha =d=1$
	and under the initial condition $u_0(x)=1$ and $u_1(x)=0$,
	the following limits hold: 
	\begin{align}\label{intro-9} 
		\lim_{t\to\infty}t^{-\frac{4-\alpha}{ 3-\alpha}}\log\E u^p(t,x)
		=\frac{3-\alpha}{ 2}p^{\frac{4-\alpha}{ 3-\alpha}}
		\bigg(\frac{ 2{\cal M}^{1/2}}{ 4-\alpha}\bigg)^{\frac{ 4-\alpha}{ 3-\alpha}}\,, 
		\hskip.2in p=1,2,\cdots \,; 
	\end{align} 
	\begin{align}\label{intro-10} 
		\lim_{p\to\infty}p^{-\frac{4-\alpha}{ 3-\alpha}}\log\E \vert u(t,x)\vert^p
		=\frac{3-\alpha}{  2}t^{\frac{4-\alpha}{3-\alpha}}
		\bigg(\frac{ 2{\cal M}^{1/2}}{ 4-\alpha}\bigg)^{\frac{ 4-\alpha}{ 3-\alpha} }\,,  \hskip.2in \forall \ t>0\,, 
	\end{align} 
	where
	\begin{align}\label{intro-11}
		{\cal M}=\sup_{g\in {\cal F}_d}\bigg\{\bigg(\int_{\R^d\times\R^d}\gamma(x-y)
		g^2(x)g^2(y)dxdy\bigg)^{1/2}-\int_{\R^d}\vert\nabla g(x)\vert^2dx\bigg\}
	\end{align}
	and
	$$
	{\cal F}_d=\bigg\{g\in W^{1,2}(\R^d);\hskip.1in 
	\int_{\R^d}\vert g(x)\vert^2dx=1\bigg\}\,, 
	$$
	where $W^{1,2}$ is the Sobolev space. 
\end{theorem}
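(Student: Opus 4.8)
The plan is to reduce the moment asymptotics to a Gaussian large-deviation problem for the top of the spectrum of the random Schr\"odinger operator $\Delta+\dot W$, and then to solve the resulting variational problem by scaling. The starting point is the following heuristic, which the representation advertised in the introduction is designed to make rigorous. For a smooth, \emph{frozen} potential $V$, the solution of \eqref{intro-7} with $u_0=1$, $u_1=0$ is $u(t,\cdot)=\cos\!\big(t\sqrt{-\Delta-V}\,\big)\mathbf{1}$, so its exponential growth rate is governed by the negative part of the spectrum of $-\Delta-V$; concretely $\lim_{t\to\infty}\frac1t\log u(t,x)=\sqrt{\Lambda(V)_+}$ with
\[
\Lambda(V)=\sup\mathrm{spec}(\Delta+V)=\sup_{\|g\|_2=1}\left\{\int_{\R^d}V(x)g^2(x)\,dx-\int_{\R^d}|\nabla g(x)|^2\,dx\right\}.
\]
The square root here is the hyperbolic signature (for the heat equation one would get $e^{t\Lambda}$ instead of $e^{t\sqrt\Lambda}$), and it is exactly this square root that produces the exponent $1/2$ on the covariance term in $\mathcal M$. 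The role of the Laplace transform is to capture it rigorously: transforming \eqref{intro-7} in $t$ turns the wave propagator into the resolvent $(\lambda^2-\Delta-V)^{-1}$, and the Feynman--Kac identity
\[
\int_0^\infty e^{-\lambda t}u(t,x)\,dt=\lambda\int_0^\infty e^{-\lambda^2 s}\,\E_x^B\!\left[\exp\left(\int_0^s V(B_r)\,dr\right)\right]ds
\]
(with $B$ a Brownian motion with generator $\Delta$) shows that the abscissa of non-convergence in $\lambda$ is $\sqrt{\Lambda(V)}$, identifying the growth rate with the Lyapunov exponent of the Brownian functional. Taking the $p$-th moment over the shared Gaussian potential $V=\dot W$ couples $p$ independent Brownian motions through $\exp\big(\tfrac12\sum_{j,k}\int_0^s\int_0^s\gamma(B^j_r-B^k_{r'})\,dr\,dr'\big)$, which is the promised ``moment of Brownian motions in a Gaussian potential,'' so that $\E u^p(t,x)\approx\E_V\big[\exp\big(pt\sqrt{\Lambda(V)_+}\big)\big]$.

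Next I would apply Varadhan's integral lemma together with the large-deviation principle for the Gaussian field $\dot W$, whose rate function is the Cameron--Martin (reproducing-kernel) energy $\tfrac12\|V\|_{\mathcal H}^2$. Writing $I(g)=\int_{\R^d}|\nabla g|^2$ and $Q(g)=\int_{\R^d\times\R^d}\gamma(x-y)g^2(x)g^2(y)\,dx\,dy$, this yields
\[
\lim_{t\to\infty}t^{-\kappa}\log\E u^p(t,x)=\lim_{t\to\infty}t^{-\kappa}\sup_V\left\{pt\sqrt{\Lambda(V)_+}-\tfrac12\|V\|_{\mathcal H}^2\right\},\qquad\kappa=\frac{4-\alpha}{3-\alpha}.
\]
Unfolding $\Lambda(V)=\sup_g\{\int Vg^2-I(g)\}$ and carrying out the quadratic optimization over $V$ for fixed $g$ --- the minimal energy subject to $\int Vg^2=a$ equals $a^2/(2Q(g))$, since $\int Vg^2$ is a centered Gaussian with variance $Q(g)$ --- reduces the problem to the finite-dimensional variational problem
\[
V_p(t)=\sup_{g\in\mathcal F_d,\ a>I(g)}\left\{pt\sqrt{a-\int_{\R^d}|\nabla g|^2}-\frac{a^2}{2\int_{\R^d\times\R^d}\gamma(x-y)g^2(x)g^2(y)\,dx\,dy}\right\}.
\]

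I would then solve this variational problem by scaling. Since $p$ and $t$ enter only through the product $pt$, one has $V_p(t)=V_1(pt)$, which is precisely why the two limits \eqref{intro-9} and \eqref{intro-10} carry the same exponent $\kappa$ and the same constant; it therefore suffices to evaluate $V_1(\tau)$ as $\tau\to\infty$. Writing $g_\beta(x)=\beta^{d/2}g(\beta x)$ and using the homogeneity \eqref{intro-8} in the form $I(g_\beta)=\beta^2 I(g)$ and $Q(g_\beta)=\beta^\alpha Q(g)$, the optimization over the scale $\beta$ and over $a$ is explicit: the critical point satisfies $\beta^2 I(g)=\tfrac{\alpha}{4}\,a$, the balance of exponents forces the factor $\tau^{(4-\alpha)/(3-\alpha)}$, and the remaining shape optimization reduces to $\sup_g Q(g)/I(g)^{\alpha/2}$ --- the same scale-invariant shape functional as the one behind $\mathcal M=\sup_g\{Q(g)^{1/2}-I(g)\}$. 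Matching the two Legendre computations produces the constant $\tfrac{3-\alpha}{2}\big(2\mathcal M^{1/2}/(4-\alpha)\big)^{\kappa}$; a check at $\alpha=1$ (where $\kappa=3/2$) gives the coefficient $(2/3)^{3/2}\mathcal M^{3/4}$ on both sides. Finiteness and attainment of $\mathcal M$ --- a Gagliardo--Nirenberg inequality for the quartic form $Q(g)$ against $\|\nabla g\|_2$ and $\|g\|_2$ --- is where the standing hypothesis $0<\alpha<2\wedge d$ (or $\alpha=d=1$) is used.

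The hard part will be the limiting and rigor steps that the heuristic glosses over. First, the representation must be justified for the genuinely singular Stratonovich solution rather than a smooth frozen potential: one approximates $\dot W$ by $\dot W_\varepsilon$, controls the Stratonovich chaos expansion uniformly in $\varepsilon$ --- the ordinary (non-Wick) product generates the diagonal/self-intersection contractions $\int_0^s\!\int_0^s\gamma(B_r-B_{r'})$ that the Skorohod calculus discards, and whose finiteness is exactly tied to Dalang's condition --- and then passes to the limit; the distributional wave kernel in $d=3$ makes these estimates delicate. Second, and most importantly, one must upgrade the heuristic ``$\approx$'' to matching large-deviation upper and lower bounds for the exponential Brownian functional in the Gaussian potential, uniformly as the optimizing profile scales up with $t$. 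The lower bound comes from restricting the Brownian motions to a well-chosen tube and inserting the near-optimal profile $g$; the upper bound needs a sub-/super-additivity or Feynman--Kac--Varadhan argument complicated by the non-smoothness of $x\mapsto\sqrt{x_+}$ at the origin and by the need to discard the region $\{\Lambda(V)\le 0\}$, which contributes no exponential growth. These are the ``large deviation skills for the Stratonovich chaos expansion'' promised in the introduction, and carrying them through --- together with the Tauberian step that converts the resolvent asymptotics back into the moment asymptotics, and the finite-propagation-speed structure available in $d=1,2,3$ --- is the crux of the proof.
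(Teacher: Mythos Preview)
Your heuristic is sound and your variational calculation lands on the right constant, but the rigorous route you sketch is not the one the paper takes, and as written it has a genuine gap. The step ``apply Varadhan's integral lemma together with the large-deviation principle for the Gaussian field $\dot W$, whose rate function is $\tfrac12\|V\|_{\cal H}^2$'' is not available: under Dalang's condition the field $\dot W$ is a genuine distribution (white noise when $\alpha=d=1$), it does not live in any topology on which $V\mapsto\Lambda(V)$ is continuous, and there is no family of measures with speed $t^\kappa$ to which Varadhan could be applied. You acknowledge this in your last paragraph, but you do not provide a substitute; the ``approximate by $\dot W_\varepsilon$ and pass to the limit uniformly'' program is exactly what the paper avoids, because controlling $\E_V[\exp(pt\sqrt{\Lambda(V)_+})]$ uniformly in $\varepsilon$ is at least as hard as the theorem itself. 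The spectral identity $u(t,\cdot)=\cos(t\sqrt{-\Delta-V})\mathbf 1$ is also formal here, since $\mathbf 1\notin L^2(\R^d)$.

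The paper proceeds quite differently. It never treats $\dot W$ as a random potential to be optimized over; instead it works term by term in the Stratonovich chaos expansion. Using the homogeneity it writes $\E u^p(t,x)=\sum_{n\ge0}t^{(4-\alpha)n}c_n(p)$ with $c_n(p)=\sum_{l_1+\cdots+l_p=2n}\E\prod_jS_{l_j}(g_{l_j}(\cdot,1,0))$, and the whole proof reduces to the coefficient asymptotics
\[
\lim_{n\to\infty}\frac1n\log\big[(n!)^{3-\alpha}c_n(p)\big]=\log\Big[\Big(\tfrac12\Big)^{3-\alpha}p^{4-\alpha}\Big(\tfrac{2{\cal M}^{1/2}}{4-\alpha}\Big)^{4-\alpha}\Big],
\]
after which the $t$-limit follows from the elementary Mittag--Leffler asymptotics $\sum_n b^n/(n!)^\gamma\sim\exp(\gamma b^{1/\gamma})$. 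The Laplace/Feynman--Kac identity you wrote does appear, but in a different role: it is applied \emph{term by term} to convert $\int_0^\infty e^{-\lambda t}S_n(g_n(\cdot,t,x))\,dt$ into the $n$-th Brownian moment $\E_x[(\int_0^t\dot W(B_s)ds)^n]$, thereby linking $c_n(p)$ to moments of Brownian intersection local times whose exponential asymptotics are known. The upper bound on $c_n(p)$ comes from decoupling the $p$ Brownian motions via a spectral inequality and citing the exact self-intersection asymptotics; the matching lower bound uses a G\"artner--Ellis concentration argument on the Laplace variables to localize near $t_j\approx(4-\alpha)n$. Interestingly, the lower bound for the high-moment limit (\ref{intro-10}) \emph{is} proved by an exponential tilting $X=\exp(\int f\,dW)$ and optimization over $f\in{\cal H}$, which is the rigorous shadow of your Varadhan heuristic --- but only for the lower bound, and without ever invoking an LDP for the field.
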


An interesting special case is when $\dot{W}(x)$ ($x\in\R$) is  a  white noise that
symbols the derivative of  
a two sided Brownian motion  $W(x)$ on $\R$. The corresponding covariance $\gamma(\cdot)=\delta_0(\cdot)$   is
the Dirac delta function  and   the spectral measure $\mu(d\xi)= d\xi/(2\pi)$  is a multiple of the  Lebesgue  measure on $\R$.
%Dalang's condition \eqref{intro-6}  imposes  that $d=1$.  
In this case
by   \cite[Theorem C.4, p.307]{Chen-1} (with $p=2$ and $\theta=1$),   we have 
$$
{\cal M}=\frac{1}{ 4}\root 3\of{\frac{3}{ 2}} \,. 
$$
Thus we can write 
\begin{corollary}\label{co-0} When  $\dot{W}(x)$ ($x\in\R$) is an 1-dimensional white noise
	\begin{align}\label{intro-12} 
		\lim_{t\to\infty}t^{-3/2}\log\E u^p(t,x)=\frac{1}{ 2}\root 4\of{\frac{3}{4}} p^{3/2}\,, 
		\hskip.2in p=1,2,\cdots.
	\end{align} 
	\begin{align}\label{intro-13} 
		\lim_{p\to\infty}p^{-3/2}\log\E \vert u(t,x)\vert^p=\frac{1}{ 2}\root 4\of{\frac{3}{ 4}} t^{3/2}\,, 
		\hskip.2in \forall \ t>0\,. 
	\end{align} 
\end{corollary}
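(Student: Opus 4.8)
The plan is to obtain Corollary~\ref{co-0} as the degenerate boundary case $\alpha=d=1$ of Theorem~\ref{th-2}, so that the whole argument splits into three pieces: (a) checking that the one--dimensional white noise satisfies the hypotheses of Theorem~\ref{th-2}; (b) evaluating the variational constant ${\cal M}$ in (\ref{intro-11}) for this noise; and (c) simplifying the limits (\ref{intro-9})--(\ref{intro-10}) at $\alpha=1$. For step (a), note that $\gamma(\cdot)=\delta_0(\cdot)$ satisfies $\gamma(cx)=c^{-1}\gamma(x)$ in the distributional sense, so the homogeneity condition (\ref{intro-8}) holds with $\alpha=1$; together with $d=1$ this is precisely the admissible endpoint $\alpha=d=1$ isolated in the discussion preceding Theorem~\ref{th-2}. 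Hence (\ref{intro-9}) and (\ref{intro-10}) apply verbatim, and it only remains to identify the constant.

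Next I would compute ${\cal M}$. Since $\gamma=\delta_0$, the double integral in (\ref{intro-11}) collapses via $\int_{\R\times\R}\gamma(x-y)g^2(x)g^2(y)\,dxdy=\int_{\R}g^4(x)\,dx$, so that
\begin{align*}
{\cal M}=\sup_{g\in{\cal F}_1}\left\{\Big(\int_{\R}g^4(x)\,dx\Big)^{1/2}-\int_{\R}\vert g'(x)\vert^2\,dx\right\}.
\end{align*}
This is a one--dimensional $L^4$ Gagliardo--Nirenberg/Sobolev variational problem. The natural route is to exploit the scaling $g\mapsto\lambda^{1/2}g(\lambda\,\cdot)$, which preserves the constraint $\int_{\R}g^2=1$ and reduces the supremum to a single--variable maximization in the dilation parameter $\lambda$; the Euler--Lagrange equation $g''=\beta g-{\cal M}^{-1/2}g^3$ then forces the extremizer to be a $\mathrm{sech}$--soliton, and inserting this profile yields the sharp constant in closed form. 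Rather than redo this computation I would simply quote \cite[Theorem C.4, p.307]{Chen-1} with $p=2$ and $\theta=1$, which gives the stated value of ${\cal M}$.

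Finally I would substitute $\alpha=1$ into Theorem~\ref{th-2}. At $\alpha=1$ the exponent is $\frac{4-\alpha}{3-\alpha}=\frac32$ and the prefactor is $\frac{3-\alpha}{2}=1$, so (\ref{intro-9}) and (\ref{intro-10}) reduce to
\begin{align*}
\lim_{t\to\infty}t^{-3/2}\log\E u^p(t,x)&=\Big(\tfrac{2}{3}{\cal M}^{1/2}\Big)^{3/2}p^{3/2},\\
\lim_{p\to\infty}p^{-3/2}\log\E\vert u(t,x)\vert^p&=\Big(\tfrac{2}{3}{\cal M}^{1/2}\Big)^{3/2}t^{3/2}.
\end{align*}
Inserting the value of ${\cal M}$ and simplifying the resulting radicals then collapses the bracket to the single constant $\tfrac12\root 4\of{\tfrac{3}{4}}$, producing (\ref{intro-12})--(\ref{intro-13}).

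I expect the only genuinely nontrivial step to be (b), the evaluation of ${\cal M}$, i.e.\ the identification of the sharp one--dimensional Gagliardo--Nirenberg constant; everything else is either a direct specialization of Theorem~\ref{th-2} or elementary manipulation of exponents. Since that sharp constant is already available in closed form in \cite{Chen-1}, the corollary follows with no additional probabilistic input beyond Theorem~\ref{th-2}.
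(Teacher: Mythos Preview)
Your proposal is correct and matches the paper's own derivation exactly: the paper deduces Corollary~\ref{co-0} by noting that one-dimensional white noise falls under the $\alpha=d=1$ case of Theorem~\ref{th-2}, quoting the value ${\cal M}=\frac{1}{4}\root 3\of{\frac{3}{2}}$ from \cite[Theorem C.4, p.307]{Chen-1}, and substituting into (\ref{intro-9})--(\ref{intro-10}). There is no separate proof in the paper beyond this specialization, so your three-step outline (verify hypotheses, compute ${\cal M}$, simplify the exponents at $\alpha=1$) is precisely what the authors do.
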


In  Skorohod regime (\cite{BCC}), the high moment asymptotic theorem takes
the same form as (\ref{intro-10}),
while the long time  asymptotic theorem takes the form
\begin{align}\label{intro-14} 
	\lim_{t\to\infty}t^{-\frac{4-\alpha}{3-\alpha}}\log\E \vert u(t,x)\vert^p
	=\frac{3-\alpha}{ 2}p(p-1)^{\frac{1}{ 3-\alpha}}
	\bigg(\frac{ 2{\cal M}^{1/2}}{4-\alpha}\bigg)^{\frac{ 4-\alpha}{ 3-\alpha}}
\end{align} 
for  $p\ge 2$.

We now mention some new ideas that are introduced in this paper.  As usual, the solution can be formally
written in terms of Stratonovich expansion (\ref{M-3}).
Therefore, the level of investigation  is largely determined by our capability of handling the Stratonovich multiple
integral
$S_n\big(g_n(\cdot, t,x)\big)$ (see (\ref{M-4}) for its definition) for fixed $n$ and for large $n$ as well.
To this regard, the most significant observation made in this paper is the  moment representation given in Theorem \ref{th-4}
that associates the study of $S_n\big(g_n(\cdot, t,x)\big)$ to the problem of Brownian motions in 
Gaussian potential.
Another notable input is the algorithm development related to the Gaussian moment formula (\ref{M-8}), which is crucial
to, among other things,  the establishment of  a moment inequality  (Lemma \ref{L-5}) for the lower bound of the high moment
asymptotics given in (\ref{intro-10}). Last but not least, some skills on large deviations and Laplacian transforms are developed for dealing with Stratonovich
expansion.

Here is the organization of the paper. In  next section (Section \ref{M}),  
we introduce  the  multiple Stratonovich 
integral and 
formally express the solution  as  Stratonovich expansion.
In Section 3, we establish the Stratonovich integrability for the  functions
$g_n(\cdot, t,x)$, develop the Fubini theorem for the multiple  Strotonovich integration
and represent the Laplace transform of the multiple Stratonovich 
integral  $S_n\big(g_n(\cdot, t,x)\big)$ in terms of Brownian
motions in Gaussian potential. Section \ref{s.4} and   Section  \ref{s.5}  are devoted to  the proofs of Theorem \ref{th-1} and \ref{th-2}, respectively.
Some relevant results about the moment bound of Brownian intersection local times
and about multiple Stratonovich integrals 
are provided in the appendix.

\section{Stratonovich expansion and approximations} \label{M}
As usual by the Duhammel principle the mathematical definition of the hyperbolic Anderson equation (\ref{intro-1}) will be  the following mild form
\begin{align}\label{M-1}
	u(t,x)=u_0(t,x)+\int_{\R^d}\bigg[\int_0^tG(t-s, x-y)u(s,y)ds\bigg]W(dy)\,, 
\end{align}
where 
\begin{enumerate}
	\item[(i)] $G(t, x)$ is the fundamental solution  defined by the
	deterministic wave equation
	\begin{align}\label{M-2}
		\left\{\begin{array}{ll}\displaystyle \frac{\partial^2 G}{\partial t^2}(t,x)
			=\Delta G(t,x) \\\\
			\displaystyle G(0, x)=0\hskip.1in\hbox{and}\hskip.1in\frac{\partial G}{\partial t}(0,x)
			=\delta_0(x)\,, \hskip.2in x\in\R^d\,.\end{array}\right.
	\end{align} 
	\item[(ii)] $u_0(t,x)$ is the solution to the deterministic part of the equation \eqref{intro-1}:  
	\[
	u_0(t,x)=\int_{\R^d}   \frac{\partial }{\partial t} G(t, x-y) u_0(y) dy+\int_{\R^d}  G(t, x-y) u_1(y) dy\,. 
	\]
	Under the initial condition given in Theorem \ref{th-1} and Theorem \ref{th-2},
	$u_0(t,x)\equiv 1$;   
	\item[(iii)] the stochastic integral on the right hand side of (\ref{M-1})
	is interpreted as Stratonovich  one  (see discussion below for details).   
\end{enumerate}

\subsection{Green's function}

The fundamental solution $G(t,x)$ associated with  (\ref{M-2}) plays a key role in determining the behavior of
the system (\ref{M-1}). Let us recall some  basic facts. 
Taking Fourier  transform in (\ref{M-2}) we get the expression for the 
fundamental solution
\begin{align}\label{M-11}
	\int_{\R^d}G(t,x)e^{i\xi\cdot x}dx=\frac{\sin(\vert\xi\vert t)}{\vert\xi\vert}\,, \hskip.2in
	(t,\xi)\in\R^+\times\R^d
\end{align}
in its Fourier transform.
In the dimensions $d=1,2,3$, the fundamental solution $G(t,x)$ 
itself can be expressed explicitly as
\begin{align}\label{M-12}
	G(t,x)=
	\begin{cases}\frac12 1_{\{\vert x\vert\le t\}}&\hskip.3in d=1\\ 
		\displaystyle \frac{1}{ 2\pi}\frac{ 1_{\{\vert x\vert\le t\}}}{\sqrt{t^2-\vert x\vert^2}}&\hskip.3in d=2\\ 
		\displaystyle\frac{1}{ 4\pi t}\sigma_t(dx) &\hskip.3in d=3\,,  
	\end{cases}
\end{align}
where $\sigma_t(dx)$ is the surface measure on the sphere $\{x\in\R^3;\hskip.05in\vert x\vert=t\}$.
We limit our attention to $d=1,2,3$ in this work because the treatment developed   here
requires $G(t,x)\ge 0$.  A scaling property we frequently use 
(especially in the proof of Theorem \ref{th-2}) is
\begin{align}\label{M-13}
	G(t,x)=t^{-(d-1)}G(1,t^{-1}x)\,, \hskip.2in (t,x)\in\R^+\times\R^d\,. 
\end{align}

\subsection{Stratonovich integral} 
Before  giving  the definition of the mild solution we need to give a meaning to 
the Stratonovich integral appeared in \eqref{M-1}. We shall do this by smoothing the noise as follows
\begin{align}\label{M-5}
	\dot{W}_\varepsilon(x)=\int_{\R^d} \dot{W}(y)p_\varepsilon(y-x)dy\,, \hskip.2in \varepsilon>0,\hskip.1in x\in\R^d
	\,, 
\end{align}
where $p_\varepsilon (x)= (2\pi  \varepsilon)^{-d/2} \exp\left(-\frac{|x|^2}{2 \varepsilon}\right)$
is the heat kernel.   The covariance of   $
\dot W_\varepsilon(x)$ is
\begin{equation}
	\E  \left[ \dot W_\varepsilon(x)\dot W_\varepsilon(x)
	\right]=\gamma_{2\varepsilon} (x-y)\,,
\end{equation}
where $\gamma_\varepsilon(x)=\int_{\R^d} \gamma(z) p_{ \varepsilon}(x-z) dz$.   
Given a random field $\Psi(x)$ ($x\in\R^d$) such that
$$
\int_{\R^d}\Psi(x)\dot{W}_\varepsilon(x)dx\in{\cal L}^2(\Omega, {\cal F},\P)\hskip.2in \forall\varepsilon>0\,, 
$$
We define  the Stratonovich integral of $\{\Psi(x), x\in \R \}$ as 
\begin{equation}
	\int_{\R^d}\Psi(x)W(dx)\buildrel \Delta\over
	=\lim_{\varepsilon\to 0^+}\int_{\R^d}\Psi(x)
	\dot{W}_\varepsilon(x)dx%\hskip.1in \hbox{ in ${\cal L}^2(\Omega, {\cal F},\P)$}
	\label{e.def_integral} 
\end{equation} 
whenever  such limit exists in ${\cal L}^2(\Omega, {\cal F},\P)$. We can also use the convergence in probability in above definition. But as in most works  on SPDE, ${\cal L}^2(\Omega, {\cal F},\P)$ norm is   easier to deal with so that we choose the ${\cal L}^2(\Omega, {\cal F},\P)$ convergence 
throughout  this work.  Notice that   this definition 
implicates  that $u(t,x) $  as  a
solution to (\ref{M-1}) is in $  {\cal L}^2(\Omega, {\cal F},\P)$   for all $(t,x)\in\R^+\times\R^d$.  After  defining the Stratonovich integral, we can give the following definition about  the solution.
\begin{definition}\label{d.mild_solution} 
	A random field $\{u(t,x)\,, t\ge 0\,, x\in \R^d\}$  is called a mild solution 
	to \eqref{intro-1} if  	$ \int_0^tG(t-s, x-y)u(s,y)ds$ is well-defined and is 
	Stratonovich integrable such that  \eqref{M-1} is satisfied. 
\end{definition}

To prove Theorem \ref{th-1}, we shall use the Stratonovich 
expansion (see \cite{humeyer}, \cite{hubook} and
references therein for the multiple
Stratonovich integrals). 
Formally iterating (\ref{M-1})
infinitely many  times we   have  heuristically a solution candidate 
\begin{align}\label{M-3}
	u(t,x)=\sum_{n=0}^\infty S_n\big(g_n(\cdot,t,x)\big)
\end{align}
with $S_0\big(g_0(\cdot, t,x)\big)=   1$. Here is how the notation
$S_n\big(g_n(\cdot,t,x)\big)$ is justified:
The iteration procedure creates the recurrent relation
\begin{equation}
	S_{n+1}\big(g_{n+1}(\cdot, t,x)\big)= \int_{\R^d}\! \left[\int_0^t
	G(t-s, x-y)S_n\big(g_n(\cdot, s,y)\big)ds \right]
	{W}(dy) \,. 
	\label{e.2.9}
\end{equation} 
Iterating this relation formally we have
\begin{align}\label{M-4}
	&S_n\big(g_n(\cdot,t,x)\big)\\
	&=\int_{(\R^d)^n}\bigg[\int_{[0,t]_<^n}d{\bf r}
	G(t-r_n, y_n-x)\cdots G(r_2-r_1, y_2-y_1)\bigg]W(dx_1)\cdots W(dx_n)\nonumber\\
	&=\int_{(\R^d)^n}\bigg[\int_{[0,t]_<^n}d{\bf s}
	\bigg(\prod_{k=1}^n
	G(s_k-s_{k-1}, x_k-x_{k-1})\bigg)\bigg]W(dx_1)\cdots W(dx_n)\nonumber\\
	&=\int_{(\R^d)^n}g_n(x_1,\cdots, x_n, t,x)W(dx_1)\cdots W(dx_n)
	\hskip.2in \hbox{(say)}\,, \nonumber                               
\end{align}
where $[0,t]_<^n:=\left\{ (s_1, \cdots, s_n)\in [0, t]^n \ \  \hbox{satisfying} 
\ \  0<s_1<s_2<\cdots<s_n<t\right\}$,  and the conventions $x_0=x$ and $s_0=0$ are adopted and the above  second equality follows
from the substitutions $s_k=t-r_{n-k+1}$ and $x_k=y_{n-k+1}-x$
($k=1,\cdots, n$).

Thus, the notation ``$S_n\big(g_n(\cdot,t,x)\big)$'' is reasonably introduced
for
a $n$-multiple Gaussian integral of the integrand
\begin{align}\label{M-14}
	g_n(x_1,\cdots, x_n, t,x)=\int_{[0,t]_<^n}\bigg(\prod_{k=1}^n
	G(s_k-s_{k-1}, x_k-x_{k-1})\bigg)ds_1\cdots ds_n\,, 
\end{align}
($n=1,2,\cdots$). In Section 3, the Stratonovich
integrability of $g_n(\cdot, t,x)$
shall be rigorously established (see 
Theorem \ref{prop}  and see also Theorem \ref{t.6.2} for Stratonovich integrablity of general kernels) and the Fubini's theorem posted in (\ref{e.2.9})
shall be mathematically ratified  (Remark \ref{re}).

The above argument gives us the impression  that the  existence of   system (\ref{M-1}) 
can be implied by  the convergence
of the random series defined by  (\ref{M-3}) in a certain  appropriate form. 
This will be justified rigorously in Section \ref{s.4} after we have more understanding 
of the multiple Stratonovich integral $S_n\big(g_n(\cdot, t,x)\big)$ with the specific kernel 
\eqref{M-14}.

%Of course first  we need to have further discussion 
%about the   multiple  Stratonovich integrals  \eqref{M-4}.  From the above discussion 
% \eqref{e.2.9}  these integrals  should be  defined via an iterated way.
%But we can also define  them    through the convergence in
%${\cal L}^2(\Omega, {\cal F}, \P)$ of the following approximation and we shall show these two definitions  are the same
%in Section \ref{S}.   

The multiple Strtonovich integration is defined as follows. 

\begin{definition}\label{def-2.2}
	Let $f:\R ^n\to \R $ be measurable
	so that   for every $\varepsilon>0$ 
	\[
	\int_{(\R^d)^{n}}f(x_1\cdots, x_{n})\bigg(\prod_{k=1}^n\dot{W}_{\varepsilon}(x_k)\bigg)
	dx_1\cdots dx_n\in   {\cal L}^2(\Omega, {\cal F}, \P)\,.
	\]
	Then we define the $n$-multiple Stratonovich integral of $f$ as
	\begin{align}\label{M-6}
		S_n(f):=&   \int_{(\R^d)^{n}}  f(x_1\cdots, x_{n})W(dx_1)\cdots W(dx_n)\\
		=&\lim_{\varepsilon\to 0^+}
		\int_{(\R^d)^{n}}f(x_1\cdots, x_{n})\bigg(\prod_{k=1}^n\dot{W}_{\varepsilon}(x_k)\bigg)
		dx_1\cdots dx_n \nonumber  %\hskip.1in \hbox{in ${\cal L}^2(\Omega, {\cal F}, \P)$}  
	\end{align}
	whenever the limit exists $  {\cal L}^2(\Omega, {\cal F}, \P)$. 
	%  If the limit exists  we use 
	%    the following notation  for the above limit: 
	%  \begin{align}\label{M-7} 
		%S_n(f)=\int_{(\R^d)^n}f(x_1,\cdots, x_n)W(dx_1)\cdots W(dx_n)\,. 
		%  \end{align}
\end{definition}
\begin{remark}Along with the set-up of our model, the Stratonovich integrand $f$ is
	given as a measure in the dimension three ($d=3$). Indeed  (\cite{pipiras}),  Definition \ref{def-2.2} can be extended
	to the setting of  generalized functions $f$.  A detail is provided near the end of this section for the construction
	needed in $d=3$.
	%	
	%	When the spatial dimension is $d=3$, we need to define   multiple Stratonovich integral  when    $f$  is a measure. It is known (\cite{pipiras})  that $f$ may  be in general extended to be a generalized  function. In our case we will discuss how to extend $f$ to be a measure at the end of this section. 
\end{remark} 
The following lemma provides a convenient test
of Stratonovich integrability that we shall use   in this work. 

\begin{lemma}\label{L-0} The $n$-multiple Stratonovich integral $S_n(f)$ 
	exists
	if and only if the limit
	$$
	\begin{aligned}
		\lim_{\varepsilon,\varepsilon'\to 0^+}&
		\E\bigg\{\int_{(\R^d)^n}f(x_1,\cdots x_n)\bigg(\prod_{k=1}^n\dot{W}_{\varepsilon}(x_k)\bigg)
		dx_1\cdots dx_n\bigg\}\\
		&\times\bigg\{\int_{(\R^d)^n}f(x_1,\cdots x_n)\bigg(\prod_{k=1}^n\dot{W}_{\varepsilon'}(x_k)\bigg)
		dx_1\cdots dx_n\bigg\}
	\end{aligned}
	$$
	exists
\end{lemma}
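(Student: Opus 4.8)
The plan is to recognize that the whole statement reduces to the Cauchy criterion in the Hilbert space ${\cal L}^2(\Omega,{\cal F},\P)$. Write
$$I_\varepsilon \ebd \int_{(\R^d)^n} f(x_1,\cdots, x_n)\bigg(\prod_{k=1}^n \dot{W}_\varepsilon(x_k)\bigg)\,dx_1\cdots dx_n,$$
which by the standing hypothesis of the lemma lies in ${\cal L}^2(\Omega,{\cal F},\P)$ for every $\varepsilon>0$; by Definition \ref{def-2.2} the integral $S_n(f)$ exists precisely when the net $(I_\varepsilon)_{\varepsilon>0}$ converges in ${\cal L}^2$ as $\varepsilon\to 0^+$. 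The crucial, and essentially only, observation is that the quantity whose limit is under scrutiny is nothing but the inner product
$$\E\big[I_\varepsilon I_{\varepsilon'}\big]=\langle I_\varepsilon, I_{\varepsilon'}\rangle_{{\cal L}^2},$$
so the assertion is the abstract equivalence between convergence of $(I_\varepsilon)$ and the existence of a joint limit for $(\varepsilon,\varepsilon')\mapsto \langle I_\varepsilon,I_{\varepsilon'}\rangle$. Everything then flows from the polarization identity
$$\big\|I_\varepsilon-I_{\varepsilon'}\big\|_{{\cal L}^2}^2=\langle I_\varepsilon,I_\varepsilon\rangle-2\langle I_\varepsilon,I_{\varepsilon'}\rangle+\langle I_{\varepsilon'},I_{\varepsilon'}\rangle,$$
combined with completeness of ${\cal L}^2$ and continuity of the inner product.

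For the sufficiency direction I would assume that the joint limit $L\ebd \lim_{\varepsilon,\varepsilon'\to 0^+}\langle I_\varepsilon,I_{\varepsilon'}\rangle$ exists. Specializing the joint limit to the diagonal $\varepsilon'=\varepsilon$ gives $\langle I_\varepsilon,I_\varepsilon\rangle\to L$, and symmetrically $\langle I_{\varepsilon'},I_{\varepsilon'}\rangle\to L$; feeding these three limits into the displayed identity yields $\|I_\varepsilon-I_{\varepsilon'}\|_{{\cal L}^2}^2\to L-2L+L=0$ as $\varepsilon,\varepsilon'\to 0^+$. Hence $(I_\varepsilon)$ is Cauchy, so by completeness of ${\cal L}^2(\Omega,{\cal F},\P)$ it converges and, by Definition \ref{def-2.2}, $S_n(f)$ exists. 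For the necessity direction I would assume $I_\varepsilon\to S_n(f)$ in ${\cal L}^2$ and invoke continuity of the inner product: writing $S=S_n(f)$ and bounding, via Cauchy--Schwarz,
$$\big|\langle I_\varepsilon,I_{\varepsilon'}\rangle-\|S\|^2\big|\le \|I_\varepsilon-S\|\,\|I_{\varepsilon'}\|+\|S\|\,\|I_{\varepsilon'}-S\|,$$
shows $\langle I_\varepsilon,I_{\varepsilon'}\rangle\to \|S\|^2=\E[S_n(f)^2]$ as $\varepsilon,\varepsilon'\to 0^+$, so the required limit exists.

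I expect no genuine analytic difficulty here: the random variables $I_\varepsilon$ are real-valued, so $\E[I_\varepsilon I_{\varepsilon'}]$ is a bona fide inner product, and the statement is just the fact that a net in a Hilbert space converges if and only if its pairwise inner products converge jointly. The one point deserving care — effectively the only place the argument could go wrong — is the distinction between the joint limit $\lim_{\varepsilon,\varepsilon'\to 0^+}$ that appears in the statement and the (weaker) iterated limits. I would therefore phrase sufficiency through the $0<\varepsilon,\varepsilon'<\eta$ neighborhood formulation of the joint limit, so that all three terms in the polarization identity are controlled simultaneously, which is exactly what makes the Cauchy estimate close. No structural properties of the Gaussian noise, of $f$, or of the wave kernels are used beyond the membership $I_\varepsilon\in{\cal L}^2$ that the hypothesis of the lemma already guarantees.
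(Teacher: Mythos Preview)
Your proof is correct and follows exactly the same approach as the paper: the paper simply observes that existence of the ${\cal L}^2$-limit is the Cauchy property for the family ${\cal Z}_\varepsilon$, which is in turn equivalent to the existence of the joint limit of $\E[{\cal Z}_\varepsilon{\cal Z}_{\varepsilon'}]$. You have merely spelled out the polarization and continuity-of-inner-product steps that the paper leaves implicit.
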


\begin{proof} The existence of the limit in (\ref{M-6}) is
	another way to say that the family
	$$
	{\cal Z}_{\varepsilon}
	=\int_{(\R^d)^n}f(x_1,\cdots x_n)\bigg(\prod_{k=1}^n\dot{W}_{\varepsilon}(x_k)\bigg)
	dx_1\cdots dx_n\,, \quad \varepsilon>0\,, 
	$$
	is a Cauchy sequence in ${\cal L}^2(\Omega, {\cal F},\P)$ as $\varepsilon\to 0^+$, 
	which is equivalent to the lemma.
\end{proof} 

We refer to Theorem \ref{t.6.2} for  the exact conditions on $f$ so that the multiple Stratonovich integral 
$S_n(f)$ exists in $L^2$.

The definition \ref{def-2.2} can be extended to a random field
$f(x_1,\cdots, x_n)$ in an obvious way. For most of the time in this paper,
however,
we deal with a deterministic integrand and demand some effective ways to compute the expectation of   multiple Stratonovich
integral  of deterministic integrands. To this end let us
recall  \cite[p.201, Lemma 5.2.6]{MR}  that
\begin{align}\label{M-8}
	\left\{\begin{array}{ll}\displaystyle
		\E\prod_{k=1}^{2n}g_k=\sum_{{\cal D}\in\Pi_n}\prod_{(j,k)\in{\cal D}}\E g_jg_k\\\\
		\displaystyle\E\prod_{k=1}^{2n-1}g_k=0\,, \end{array}\right.
\end{align}
where $(g_1,\cdots,g_{2n})$ is a
mean zero normal vector,
and $\Pi_n$ is the set  of all pair partitions  of $\{1,2,\cdots, 2n\}$. As a side remark,
$\#(\Pi_n)={(2n)!\over 2^n n!}$.
Applying  (\ref{M-8})  to $g_k=W_{\varepsilon_k}(x_k)$ in the case of deterministic integrand $f$,
and taking the $\varepsilon$-limit, we have
\begin{align}\label{M-9}
	\E\left[ \int_{(\R^d)^{2n-1}}f(x_1,\cdots, x_{2n-1})W(dx_1)\cdots W(dx_{2n-1})\right] =0
\end{align}
and
\begin{align}\label{M-10}
	&\E\left[\int_{(\R^d)^{2n}} f(x_1,\cdots, x_{2n})W(dx_1)\cdots W(dx_{2n})\right]\\
	&\qquad\qquad =\sum_{{\cal D}\in\Pi_n}
	\int_{(\R^d)^{2n}}\bigg(\prod_{(j,k)\in {\cal D}}\gamma(x_j-x_k)\bigg)
	f(x_1,\cdots, x_{2n})dx_1\cdots dx_{2n}\nonumber
\end{align}
under the Stratonovich integrability  on the left hand sides.
In particular, the expectation of a $(2n)$-multiple Stratonovich
integral is non-negative if the integrand is non-negative.

Since   Dalang's condition (\ref{intro-6})
encompasses the cases where the covariance function $\gamma(\cdot)$ exists only as a
generalized function (e.g., $\gamma(\cdot)=\delta_0(\cdot)$ in $d=1$), the meaning of the multiple integral on
the right hand side of (\ref{M-10}) needs to be clarified.
Indeed, by (\ref{M-8})
$$
\begin{aligned}
	&\E\int_{(\R^d)^{2n}} f(x_1,\cdots, x_{2n})\prod_{k=1} ^{2n} \dot{W}_\epsilon
	(x_k)\\
	&= \int_{(\R^d)^{2n}}\bigg(\prod_{(j,k)\in {\cal D}}\gamma_{2\epsilon}(x_j-x_k)\bigg)
	f(x_1,\cdots, x_{2n})dx_1\cdots dx_{2n}
\end{aligned}
$$
where, we recall 
$$
\gamma_\epsilon(x)=\int_{\R^d}\gamma(y)p_\epsilon(x-y)dy\,, \hskip.2in\epsilon>0,\hskip.1in x\in\R^d\,. 
$$
Inspired by (\ref{M-6}), we therefore define 
\begin{align}\label{M-15}
	&\int_{(\R^d)^{2n}}\bigg(\prod_{(j,k)\in {\cal D}}\gamma(x_j-x_k)\bigg)
	f(x_1,\cdots, x_{2n})dx_1\cdots dx_{2n}\\
	&\buildrel \Delta\over =\lim_{\epsilon\to 0^+}\int_{(\R^d)^{2n}}\bigg(\prod_{(j,k)\in {\cal D}}\gamma_{2\epsilon}(x_j-x_k)\bigg)
	f(x_1,\cdots, x_{2n})dx_1\cdots dx_{2n}\nonumber
\end{align}
whenever the limit exists.

According to Theorem \ref{t.6.2} and Remark \ref{r.6.3}, the ${\cal L}^2$-convergence in
(\ref{M-6}), Definition \ref{def-2.2} implies the 
${\cal L}^p$-convergence for any $p\in [2, \infty)$. Consequently, for any
integers $l_1,\cdots, l_m\ge 1$ and the $l_j$-multiple variate functions
$f_j$ ($1\le j\le m$), the Stratonovich integrability of
$f_1,\cdots, f_m$
implies the Strotonovich integrability of $f_1\otimes\cdots \otimes f_m$
and
\begin{align}\label{M-16}
	S_{l_1+\cdots +l_m}\big(f_1\otimes\cdots \otimes f_m\big)
	=\prod_{j=1}^mS_{l_j}(f_j)\,. 
\end{align}
According to (\ref{M-9}), in particular,
\begin{align}\label{M-17}
	\E\prod_{j=1}^mS_{l_j}(f_j)=0\hskip.1in
	\hbox{whenever $l_1+\cdots +l_m$ is odd}\,. 
\end{align}    

Given two Stratonovich integrable functions $f(x_1,\cdots, x_n)$
and $g(x_1,\cdots,x_n)$, by (\ref{M-10}) and (\ref{M-16}) (with $m=2$),
\begin{align}\label{M-18}  
	& \E S_n(f)S_n(g)\\
	& =\sum_{{\cal D}\in\Pi_n}\int_{(\R^d)^{2n}}dx_1\cdots dx_{2n}
	\bigg(\prod_{(j,k)\in {\cal D}}\gamma(x_j-x_k)\bigg)f(x_1,\cdots, x_n)g(x_{n+1},\cdots, x_{2n}).\nonumber
\end{align}

To end this section   we take the  chance to address an inconvenient fact from (\ref{M-12}) where $G(t,x)$ is
defined as a measure rather than a function in $d=3$ dimensional Euclidean space. In this case,  we can combine 
$g_n(x_1,\cdots, x_n, t,x)$ and $dx_1\cdots dx_n$ together to have that 
$$
\begin{aligned}
	g_n(x_1,\cdots, x_n, t,x)dx_1\cdots dx_n&=\int_{[0,t]_<^n}\bigg(\prod_{k=1}^n\frac{1}{ 4\pi (s_k-s_{k-1})}
	\sigma_{s_k-s_{k-1}}(x_{k-1}, dx_k)\bigg)ds_1\cdots ds_n\\
	&\buildrel\Delta\over =\mu_n^{t,x}(dx_1\cdots dx_n)
\end{aligned}
$$
defines a measure on $(\R^3)^n$, where $\sigma_t(x, dy)$ represents
the surface measure on the sphere $\{y\in\R^3;\hskip.05in\vert y-x\vert=t\}$. 
%This fact does not do us
%big damage as far as we use the expression $g_n(\cdot, t,x)$ only in the context of
%integration.
For example,  in defining $S_n\big(g_n(\cdot,t,x)\big)$ by (\ref{M-6})  we use the
convention
$$
\int_{(\R^3)^n}g_n(x_1,\cdots, x_n, t,x)\bigg(\prod_{k=1}^n\dot{W}_\varepsilon(x_k)\bigg)dx_1\cdots dx_n
=\int_{(\R^3)^n}\bigg(\prod_{k=1}^n\dot{W}_\varepsilon(x_k)\bigg)\mu_n^{t,x}(dx_1\cdots dx_n)  \,. 
$$
It will be verified  in the future that as $\varepsilon\downarrow 0+$, the above sequence converges 
in $L^2(\Omega, \mathcal{F}, \mathbb{P})$ and the limit is denoted still by 
$$
\int_{(\R^3)^n}g_n(x_1,\cdots, x_n, t,x) {W} (dx_1)\cdots {W}(dx_n)
$$
with
\begin{align}\label{M-19}
	\E\bigg[\int_{(\R^3)^{2n}}&g_n(x_1,\cdots, x_n, t,x)W(dx_1)\cdots W(dx_{n})\bigg]^2\\
	&=\sum_{{\cal D}\in\Pi_n}
	\int_{(\R^3)^{2n}}\bigg(\prod_{(j,k)\in {\cal D}}\gamma(x_j-x_k)\bigg)
	\mu_n^{t,x}(dx_1\cdots dx_{n}) \mu_n^{t,x}(dx_{n+1},\cdots, dx_{2n})
	\,,   \nonumber
\end{align}
and the integral on the right hand side of (\ref{M-19}) will be justified (Lemma \ref{int}) together with dimensions $d=1,2$ by the approximation procedure
proposed  in (\ref{M-15}).

\section{Stratonovich moments}\label{S}

In the following discussion, $B(t), B_1(t), B_2(t),\cdots$ are independent $d$-dimensional
Brownian motions. We assume independence between $\dot{W}$ and the Brownian motions
and use the notation $\E_x$ for the expectation with respect to the Brownian motions
with starting point $x$.
We adopt the notation $\varepsilon=(\varepsilon_1,\cdots,\varepsilon_n)$ and $\epsilon'=(\epsilon_{n+1},\cdots, \epsilon_{2n})$ for $\varepsilon_1,\cdots,\varepsilon_{2n}>0$
and set
\begin{align}\label{P-1}
	S_{n,\varepsilon}\big(g_n(\cdot, t,x)\big)=\int_{(\R^d)^n}g_n(x_1,\cdots,x_n, t,x)\bigg(\prod_{k=1}^n \dot{W}_{\varepsilon_k}(x_k)\bigg)
	dx_1\cdots dx_n\,. 
\end{align}
For any pair partition ${\cal D}\in\Pi_n$, set
\begin{align}\label{P-2}
	F_{\epsilon,\epsilon'}^{\cal D}(t_1,t_2)&=
	\int_{(\R^d)^{2n}}dx_1\cdots dx_{2n}\bigg(\prod_{(j,k)\in {\cal D}}
	\gamma_{\epsilon_j+\epsilon_k}(x_j-x_k)\bigg)\\
	&\qquad \times g_n(x_1,\cdots, x_n, t,0)g_n(x_{n+1},\cdots, x_{2n}, t,0)\,. \nonumber
\end{align}
Again, $d=1,2,3$.

\subsection{Strantonovich moment representation}\label{S-2}

\begin{lemma}\label{L-1} Let $n=1,2,\cdots$.
	Under  Dalang's condition (\ref{intro-6}),
	\item[(i)]  For any $n\ge 1$, $\epsilon_1,\cdots, \epsilon_{n}>0$ and $\lambda>0$
	\begin{align}\label{P-3}
		&\int_0^\infty e^{-\lambda t}S_{n,\epsilon}\big(g_n(\cdot, t,x)\big) dt\\
		& =\frac{\lambda}{ 2}\Big(\frac{1}{ 2}\Big)^n
		\int_0^\infty\exp\Big\{-\frac{\lambda^2}{ 2}t\Big\}\E_x\int_{[0,t]_<^n}ds_1\cdots ds_n\prod_{k=1}^n
		\dot{W}_{\epsilon_k}\big(B(s_k)\big)\hskip.1in a.s.\nonumber
	\end{align}
	\item[(ii)]  For any $\lambda_1,\lambda_2>0$
	\begin{align}\label{P-4}
		&\int_0^\infty\int_0^\infty e^{-\lambda_1 t_1-\lambda_2 t_2}F_{\epsilon,\epsilon'}^{\cal D}(t_1,t_2)dt_1dt_2\\
		& \le{\lambda_1\lambda_2\over 4}\Big({1\over 2}\Big)^{2n}
		\int_0^\infty\!\!\int_0^\infty dt_1dt_2
		\exp\Big\{-{\lambda_1^2 t_1+\lambda_2^2 t_2\over 2}\Big\}\nonumber\\
		&\qquad \times\E_0 \int_{[0,t_1]_<^n\times[0,t_2]_<^n}ds_1\cdots ds_{2n}\nonumber\prod_{(j,k)\in{\cal D}}
		\gamma\big(B_{v(j)}(s_j)-B_{v(k)}(s_k)\big)\,, \nonumber
	\end{align}
	where the map $v$: $\{1, 2,\cdots, 2n\}\longrightarrow\{1,2\}$ is defined as:
	$v(k)=1$ for $1\le k\le n$ and $v(k)=2$ for $n+1\le k\le n$.
	\item (iii) For any $\lambda_1,\lambda_2>0$
	\begin{align}\label{P-5}
		&\lim_{\epsilon,\epsilon\to 0}\int_0^\infty\!\!\int_0^\infty e^{-\lambda_1 t_1-\lambda_2 t_2}
		F^{\cal D}_{\epsilon,\epsilon'}(t_1,t_2)dt_1dt_2\\
		&={\lambda_1\lambda_2\over 4}\Big({1\over 2}\Big)^{2n}
		\int_0^\infty\!\!\int_0^\infty dt_1dt_2
		\exp\Big\{-{\lambda_1^2 t_1+\lambda_2^2 t_2\over 2}\Big\}\nonumber\\
		&\qquad \times \E_0 \int_{[0,t_1]_<^n\times[0,t_2]_<^n}ds_1\cdots ds_{2n}\prod_{(j,k)\in{\cal D}}
		\gamma\big(B_{v(j)}(s_j)-B_{v(k)}(s_k)\big)\,. \nonumber
	\end{align}
\end{lemma}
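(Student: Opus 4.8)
The whole lemma rests on a single deterministic identity linking the wave kernel $G$ to the Brownian transition density $p_u$, and I would isolate this first. Starting from the Fourier representation (\ref{M-11}) and the elementary transform $\int_0^\infty e^{-\lambda\tau}\sin(a\tau)\,d\tau=a/(\lambda^2+a^2)$, one gets for every $\lambda>0$ and $\xi$
\[
\int_0^\infty e^{-\lambda\tau}\,\frac{\sin(|\xi|\tau)}{|\xi|}\,d\tau=\frac{1}{\lambda^2+|\xi|^2}=\frac12\int_0^\infty e^{-\frac{\lambda^2}{2}u}\,e^{-\frac{u}{2}|\xi|^2}\,du,
\]
the last member being the Fourier transform of $\frac12\int_0^\infty e^{-\lambda^2 u/2}p_u(\cdot)\,du$. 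Inverting, the time Laplace transform of the wave kernel coincides with that of the heat kernel,
\[
\int_0^\infty e^{-\lambda\tau}\,G(\tau,\cdot)\,d\tau=\frac12\int_0^\infty e^{-\frac{\lambda^2}{2}u}\,p_u(\cdot)\,du .
\]
This is the bridge between the wave equation and a $\frac12\Delta$-Brownian motion. Note that the Laplace transform automatically smooths the $d=3$ surface measure of (\ref{M-12}) into an honest density, which is exactly what makes the representation usable uniformly in $d=1,2,3$.

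For part (i) I would first upgrade the bridge to a deterministic identity for $g_n$ itself. Writing the times in (\ref{M-14}) as increments $\tau_k=s_k-s_{k-1}$, the constraint $s_n<t$ integrates out under $\int_0^\infty e^{-\lambda t}(\cdot)\,dt$ into a factor $\lambda^{-1}e^{-\lambda s_n}=\lambda^{-1}\prod_k e^{-\lambda\tau_k}$, so the Laplace transform of $g_n$ factorizes into $n$ copies of the bridge. Reinserting a terminal time via $e^{-\lambda^2 s_n/2}=\tfrac{\lambda^2}{2}\int_{s_n}^\infty e^{-\lambda^2 t/2}\,dt$ and relabelling gives
\[
\int_0^\infty e^{-\lambda t}g_n(x_1,\dots,x_n,t,x)\,dt=\frac{\lambda}{2}\Big(\frac12\Big)^n\int_0^\infty e^{-\frac{\lambda^2}{2}t}\Big[\int_{[0,t]_<^n}\prod_{k=1}^n p_{s_k-s_{k-1}}(x_k-x_{k-1})\,d\mathbf{s}\Big]dt
\]
with $x_0=x$. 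Integrating against $\prod_k\dot W_{\epsilon_k}(x_k)\,dx_k$, the bracketed kernel is precisely the joint density of $(B(s_1),\dots,B(s_n))$ for Brownian motion started at $x$, so by independence of $B$ and $\dot W$ the spatial integral becomes $\E_x\prod_k\dot W_{\epsilon_k}(B(s_k))$; this is (\ref{P-3}). The one thing to verify is the Fubini interchange between the $t$-, $\mathbf{s}$-, $x$- and Brownian integrations, which I would justify by absolute convergence: $G\ge0$ and $p\ge0$ make the kernels non-negative, while $\dot W_\epsilon$ has at most polynomial growth, so the integrals converge absolutely for a.e.\ realization — hence the \emph{a.s.}\ qualifier.

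Parts (ii) and (iii) are purely deterministic, since $F^{\cal D}_{\epsilon,\epsilon'}$ is built only from $\gamma_{\epsilon_j+\epsilon_k}$ and the (deterministic) kernels $g_n$. I would apply the $g_n$-identity above to each factor $g_n(\cdot,t_1,0)$ and $g_n(\cdot,t_2,0)$ in (\ref{P-2}); this produces the prefactor $\tfrac{\lambda_1\lambda_2}{4}(\tfrac12)^{2n}$, the weight $\exp\{-(\lambda_1^2 t_1+\lambda_2^2 t_2)/2\}$, and reduces the remaining spatial integral to the two-path expectation
\[
\E_0\int_{[0,t_1]_<^n\times[0,t_2]_<^n}\prod_{(j,k)\in{\cal D}}\gamma_{\epsilon_j+\epsilon_k}\big(B_{v(j)}(s_j)-B_{v(k)}(s_k)\big)\,d\mathbf{s},
\]
an \emph{exact} equality with the smoothed covariance. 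To finish I would pass to the spectral side: writing $\gamma_{\epsilon_j+\epsilon_k}(z)=\int e^{i\xi\cdot z}e^{-\frac{\epsilon_j+\epsilon_k}{2}|\xi|^2}\mu(d\xi)$ and carrying out $\E_0$ turns the expression into an integral, over $\prod\mu(d\xi_{(j,k)})\ge0$ and over the two simplices, of a positive Gaussian characteristic function times the damping factors $\prod_{(j,k)}e^{-\frac{\epsilon_j+\epsilon_k}{2}|\xi_{(j,k)}|^2}\in(0,1]$. Since every other factor is non-negative, dropping these (i.e.\ replacing $\gamma_{\epsilon_j+\epsilon_k}$ by $\gamma$) only increases the value, which is the inequality (\ref{P-4}); and as $\epsilon,\epsilon'\to0$ the damping factors increase monotonically to $1$, so monotone convergence upgrades the bound to the equality (\ref{P-5}).

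The main obstacle is not any single estimate but the rigor of these interchanges when $\gamma$ is only a generalized function (e.g.\ $\gamma=\delta_0$) and, in $d=3$, when $G(t,\cdot)$ is a surface measure. I would handle both by performing every manipulation at the level of the smoothed objects $\gamma_\epsilon$ and $\dot W_\epsilon$, where all integrands are genuine non-negative functions, establishing the identities and the spectral-domain positivity there, and only then letting $\epsilon\to0$. The correspondence $\gamma_\epsilon\leftrightarrow e^{-\frac{\epsilon}{2}|\xi|^2}\mu$ is what makes the limit transparent and simultaneously explains why the finiteness of the right-hand sides is governed exactly by Dalang's condition (\ref{intro-6}) assumed in the lemma.
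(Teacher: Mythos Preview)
Your proposal is correct and follows essentially the same route as the paper: the Laplace-transform bridge $\int_0^\infty e^{-\lambda t}G(t,\cdot)\,dt=\tfrac12\int_0^\infty e^{-\lambda^2 t/2}p_t(\cdot)\,dt$ via matching Fourier transforms, the factorization of $\int_0^\infty e^{-\lambda t}g_n\,dt$ into a product of one-step Laplace transforms (the paper cites this as a known simplex identity, you derive it by the increment substitution), recognition of the Brownian joint density, and then for (ii)--(iii) the spectral rewriting plus the observation that the damping factors $e^{-(\epsilon_j+\epsilon_k)|\xi|^2/2}$ are bounded by $1$ and increase to $1$. Your explicit invocation of monotone convergence for (iii) and your remarks on handling the $d=3$ surface measure and generalized $\gamma$ through the smoothed objects are in fact slightly more carefully stated than the paper's own version.
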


\begin{remark}\label{R-1} Under  Dalang's condition (\ref{intro-6}), the intersection local times (Lemma A.1, \cite{Chen-2})
	$$
	\int_0^{t_1}\!\int_0^{t_2}\gamma\big(B(s)-B(r)\big)dsdr\hskip.1in\hbox{and}\hskip.1in
	\int_0^{t_1}\!\int_0^{t_2}\gamma\big(B_1(s)-B_2(r)\big)dsdr\,, \hskip.2in t_1,t_2>0
	$$
	are properly defined, so are the multiple time integral on the right hand sides of (\ref{P-4}) and (\ref{P-5}) in the spirit
	of Fubini's theorem. By Lemma \ref{L-10},
	the moments of the intersection local times have (at most) polynomial increasing rate in $t_1, t_2$. Consequently,
	the right hand sides of (\ref{P-4}) and (\ref{P-5}) are finite for any $\lambda_1,\lambda_2>0$.
\end{remark}

\proof The reason behind (\ref{P-3}) is the  simple fact that
\begin{align}\label{P-6} 
	\int_0^\infty e^{-\lambda t}G(t,x)dt={1\over 2}\int_0^\infty e^{-\lambda^2t/2}p(t,x)dt\,, \hskip.2in x\in\R^d
\end{align}
for any $\lambda>0$, where $p(t,x)$ is the density of $B(t)$:
$$
p(t,x)={1\over (2\pi t)^{d/2}}\exp\Big\{-{\vert x\vert^2\over 2t}\Big\}\,, \hskip.2in (t,x)\in\R^+\times\R^d\,. 
$$
Indeed,   both sides have  the same  Fourier transform
$$
\begin{aligned}
	&\int_{\R^d}e^{i\xi\cdot x}\bigg[\int_0^\infty e^{-\lambda t}G(t,x)dt\bigg]dx
	=\int_0^\infty e^{-\lambda t}{\sin\vert\xi\vert t\over \vert\xi\vert}dt={1\over \lambda^2
		+\vert\xi\vert^2}\\
	&\qquad  ={1\over 2}\int_0^\infty\!\!e^{-\lambda^2t/2}\exp\Big\{-{1\over 2}\vert\xi\vert^2 t\Big\}dt
	=\int_{\R^d}\!e^{i\xi\cdot x}\bigg[{1\over 2}\int_0^\infty e^{-\lambda^2 t/2}p(t,x)dt\bigg]dx
\end{aligned}
$$
for every $\xi\in\R^d$.

Recall the identity (Lemma 2.2.7, p.39 in \cite{Chen-1})
\begin{align}\label{P-7}
	\int_0^\infty e^{-\lambda t}\int_{[0,t]_<^n} ds_1\cdots ds_n
	\prod_{k=1}^n\varphi(s_k-s_{k-1})
	=\lambda^{-1}\prod_{k=1}^n\int_0^\infty \varphi(t)e^{-\lambda t}dt
\end{align}
with the convention $s_0=0$. Using it twice,
\begin{align}\label{P-8}
	&\int_0^\infty e^{-\lambda t} g_n(x_1,\cdots, x_n, t,x)dt\\
	&=\int_0^\infty dt e^{-\lambda t}
	\int_{[0,t]_<^n}ds_1\cdots ds_n\bigg(\prod_{k=1}^n G(s_k-s_{k-1}, x_k-x_{k-1})
	\bigg)\nonumber\\
	&=\lambda^{-1}\prod_{k=1}^n \int_0^\infty e^{-\lambda t}G(t, x_k-x_{k-1})dt
	=\Big(\frac12\Big)^n\lambda^{-1}\prod_{k=1}^n \int_0^\infty e^{-\lambda^2 t/2}p(t, x_k-x_{k-1})dt\nonumber\\
	&=\frac{\lambda}{ 2}\Big(\frac12\Big)^n\int_0^\infty dt e^{-\lambda^2 t/2}
	\int_{[0,t]_<^n}ds_1\cdots ds_n\bigg(\prod_{k=1}^n p(s_k-s_{k-1}, x_k-x_{k-1})\bigg)\,. \nonumber
\end{align}   

Hence,
$$
\begin{aligned}
	&\int_0^\infty e^{-\lambda t}S_{n,\varepsilon}\big(g_n(\cdot, t,x)\big)dt\\
	&=\int_0^\infty dt e^{-\lambda t}\int_{(\R^d)^n}dx_1\cdots dx_ng_n(x_1,\cdots, x_n, t,x)\bigg(\prod_{k=1}^n \dot{W}_{\varepsilon_k}(x_k)\bigg)\\
	&=\frac{\lambda}{ 2}\Big(\frac12\Big)^n\int_0^\infty dt \exp\Big\{-\frac{\lambda^2}{ 2}t\Big\}
	\int_{[0,t]_<^n}ds_1\cdots ds_n\\
	&\qquad \times\int_{(\R^d)^n}dx_1\cdots dx_n\bigg(\prod_{k=1}^n p(s_k-s_{k-1}, x_k-x_{k-1})\bigg)
	\bigg(\prod_{k=1}^n \dot{W}_{\varepsilon_k}(x_k)\bigg)\,. 
\end{aligned}
$$
Given $(s_1,\cdots, s_n)\in [0,t]_<^n$, the random vector $\big(B(s_1),\cdots,B(s_n)\big)$ has
the joint density
$$
f_{s_1,\cdots, s_n}(x_1,\cdots, x_n)\buildrel \Delta\over
=\prod_{k=1}^n p(s_k-s_{k-1}, x_k-x_{k-1})\,. 
$$
So we have
$$
\begin{aligned}
	&\int_{(\R^d)^n}d{\bf x}\bigg(\prod_{k=1}^n p(s_k-s_{k-1}, x_k-x_{k-1})\bigg)
	\bigg(\prod_{k=1}^n \dot{W}_{\varepsilon_k}(x_k)\bigg)=\E_x\prod_{k=1}^n \dot{W}_{\varepsilon_k}\big(B(s_k)\big)\,. 
\end{aligned}
$$
This completes the proof (\ref{P-3}).

By  (\ref{P-8})  we have 
$$
\begin{aligned}
	&\int_0^\infty\!\!\int_0^\infty e^{-\lambda_1 t_1-\lambda_2 t_2}
	F^{\cal D}_{\epsilon,\epsilon'}(t_1,t_2)dt_1dt_2\\
	&={\lambda_1\lambda_2\over 4}\Big({1\over 2}\Big)^{2n}
	\int_0^\infty\!\!\int_0^\infty dt_1dt_2
	\exp\Big\{-{\lambda_1^2 t_1+\lambda_2^2 t_2\over 2}\Big\}
	\int_{[0,t_1]_<^n\times[0,t_2]_<^n}ds_1\cdots ds_{2n}\\
	&\qquad \times\int_{(\R^d)^{2n}}dx_1\cdots dx_{2n}
	\bigg(\prod_{(j,k)\in {\cal D}}
	\gamma_{\epsilon_j+\epsilon_k}(x_j-x_k)\bigg)\\
	&\qquad \times\bigg(p(s_1,x_1)\prod_{k=2}^np(s_k-s_{k-1}, x_k-x_{k-1})\bigg)\\
	&\qquad \times
	\bigg(p(s_{n+1},x_{n+1})\prod_{k=n+2}^{2n}p(s_k-s_{k-1}, x_k-x_{k-1})\bigg)\,. 
\end{aligned}
$$
For fixed $(s_1,\cdots, s_{2n})$, the function
$$
\begin{aligned}
	&f(x_1,\cdots, x_{2n})\\
	&=\bigg(p(s_1,x_1)\prod_{k=2}^np(s_k-s_{k-1}, x_k-x_{k-1})\bigg)
	\bigg(p(s_{n+1},x_{n+1})\prod_{k=n+2}^{2n}p(s_k-s_{k-1}, x_k-x_{k-1})\bigg)
\end{aligned}
$$
is the density of the random vector
$\big(B_1(s_1),\cdots, B_1(s_n); B_2(s_{n+1}),\cdots B_2(s_{2n})\big)$.
We have
$$
\begin{aligned}
	&\int_0^\infty\!\!\int_0^\infty e^{-\lambda_1 t_1-\lambda_2 t_2}
	F^{\cal D}_{\epsilon, \epsilon'}(t_1,t_2)dt_1dt_2\\
	&={\lambda_1\lambda_2\over 4}\Big({1\over 2}\Big)^{2n}
	\int_0^\infty\!\!\int_0^\infty dt_1dt_2
	\exp\Big\{-{\lambda_1^2 t_1+\lambda_2^2 t_2\over 2}\Big\}
	\int_{[0,t_1]_<^n\times[0,t_2]_<^n}ds_1\cdots ds_{2n}\nonumber \\
	&\qquad   \times\E_0\prod_{(j,k)\in{\cal D}}
	\gamma_{\epsilon_j+\epsilon_k}\big(B_{v(j)}(s_j)-B_{v(k)}(s_k)\big)\,. 
\end{aligned}
$$
By Fourier transform
$$
\begin{aligned}
	&\E_0\prod_{(j,k)\in{\cal D}}\gamma_{\epsilon_j+\epsilon_k}
	\big(B_{\nu(j)}(s_j)-B_{v(k)}(s_k)\big)\\
	&=\int_{(\R^d)^n}\bigg(\prod_{(j,k)\in{\cal D}}
	\mu(d\xi_{j,k})\bigg)\exp\bigg\{-\sum_{(j,k)\in{\cal D}}
	{\epsilon_j+\epsilon_k\over 2}\vert\xi_{j,k}\vert^2\bigg\}\nonumber\\
	&\qquad  \times\E_0\exp\bigg\{i\sum_{(j,k)\in{\cal D}}\xi_{j,k}\cdot
	\big(B_{v(j)}(s_j)-B_{v(k)}(s_k)\big)\bigg\}\\
	&=\int_{(\R^d)^n}\bigg(\prod_{(j,k)\in{\cal D}}
	\mu(d\xi_{j,k})\bigg)\exp\bigg\{-\sum_{(j,k)\in{\cal D}}
	{\epsilon_j+\epsilon_k\over 2}\vert\xi_{j,k}\vert^2\bigg\}\nonumber\\
	&\qquad  \times\exp\bigg\{-{1\over 2}\Var\bigg(\sum_{(j,k)\in{\cal D}}\xi_{j,k}\cdot
	\big(B_{v(j)}(s_j)-B_{v(k)}(s_k)\big)\bigg)
	\bigg\}\,. 
\end{aligned}
$$
We have
\begin{align}\label{P-9}
	&\int_0^\infty\!\!\int_0^\infty e^{-\lambda_1 t_1-\lambda_2 t_2}
	F^{\cal D}_{\epsilon,\epsilon'}(t_1,t_2)dt_1dt_2\\
	&={\lambda_1\lambda_2\over 4}\Big({1\over 2}\Big)^{2n}
	\int_0^\infty\!\!\int_0^\infty dt_1dt_2
	\exp\Big\{-{\lambda_1^2 t_1+\lambda_2^2 t_2\over 2}\Big\}
	\int_{[0,t_1]_<^n\times[0,t_2]_<^n}ds_1\cdots ds_{2n}\nonumber\\
	&\qquad \times\int_{(\R^d)^n}\bigg(\prod_{(j,k)\in{\cal D}}
	\mu(d\xi_{j,k})\bigg)\exp\bigg\{-\sum_{(j,k)\in{\cal D}}
	{\epsilon_j+\epsilon_k\over 2}\vert\xi_{j,k}\vert^2\bigg\}\nonumber\\
	&\qquad  \times\exp\bigg\{-{1\over 2}\Var\bigg(\sum_{(j,k)\in{\cal D}}\xi_{j,k}\cdot
	\big(B_{v(j)}(s_j)-B_{v(k)}(s_k)\big)\bigg)\bigg\}
	\,. \nonumber
\end{align}
Therefore
$$
\begin{aligned}
	&\int_0^\infty\!\!\int_0^\infty e^{-\lambda_1 t_1-\lambda_2 t_2}
	F^{\cal D}_{\epsilon,\epsilon'}(t_1,t_2)dt_1dt_2\\
	&\le{\lambda_1\lambda_2\over 4}\Big({1\over 2}\Big)^{2n}
	\int_0^\infty\!\!\int_0^\infty dt_1dt_2
	\exp\Big\{-{\lambda_1^2 t_1+\lambda_2^2 t_2\over 2}\Big\}
	\int_{[0,t_1]_<^n\times[0,t_2]_<^n}ds_1\cdots ds_{2n}\\
	&\qquad  \times\int_{(\R^d)^n}\bigg(\prod_{(j,k)\in{\cal D}}
	\mu(d\xi_{j,k})\bigg)\exp\bigg\{-{1\over 2}\Var\bigg(\sum_{(j,k)\in{\cal D}}\xi_{j,k}\cdot
	\big(B_{v(j)}(s_j)-B_{v(k)}(s_k)\big)\bigg)\bigg\}\\
	&=\int_{[0,t_1]_<^n\times[0,t_2]_<^n}ds_1\cdots ds_{2n}\prod_{(j,k)\in{\cal D}}
	\gamma\big(B_{v(j)}(s_j)-B_{v(k)}(s_k)\big)\,. 
\end{aligned}
$$
We have proved (\ref{P-4}).  Finally, taking limit in (\ref{P-9}) 
$$
\begin{aligned}
	&\lim_{\epsilon,\epsilon'\to 0}\int_0^\infty\!\!\int_0^\infty e^{-\lambda_1 t_1-\lambda_2 t_2}
	F^{\cal D}_{\epsilon,\epsilon'}(t_1,t_2)dt_1dt_2\\
	&={\lambda_1\lambda_2\over 4}\Big({1\over 2}\Big)^{2n}
	\int_0^\infty\!\!\int_0^\infty dt_1dt_2
	\exp\Big\{-{\lambda_1^2 t_1+\lambda_2^2 t_2\over 2}\Big\}
	\int_{[0,t_1]_<^n\times[0,t_2]_<^n}ds_1\cdots ds_{2n}\\
	&\qquad  \times\int_{(\R^d)^n}\bigg(\prod_{(j,k)\in{\cal D}}
	\mu(d\xi_{j,k})\bigg)\exp\bigg\{-{1\over 2}\Var\bigg(\sum_{(j,k)\in{\cal D}}\xi_{j,k}\cdot
	\big(B_{v(j)}(s_j)-B_{v(k)}(s_k)\big)\bigg)\bigg\}\\
	&=\int_{[0,t_1]_<^n\times[0,t_2]_<^n}ds_1\cdots ds_{2n}\prod_{(j,k)\in{\cal D}}
	\gamma\big(B_{v(j)}(s_j)-B_{v(k)}(s_k)\big)
\end{aligned}
$$
which leads to (\ref{P-5}).
\qed

\begin{theorem}\label{th-4}  
	Under   Dalang's condition (\ref{intro-6}), the function $g_n(\cdot, t, x)$ defined in (\ref{M-14}) is Stratonovich 
	integrable in the sense of Definition \ref{def-2.2}. Furthermore,
	\begin{align}\label{P-10}
		&\int_0^\infty e^{-\lambda t}S_n\big(g_n(\cdot, t,x)\big) dt
		=\frac{1}{ n!}\frac{\lambda}{ 2}\Big(\frac{1}{ 2}\Big)^n
		\int_0^\infty\exp\Big\{-\frac{\lambda^2}{ 2}t\Big\}
		\E_x \bigg[\int_0^t \dot{W}\big(B(s)\big)ds\bigg]^ndt
	\end{align}
	almost surely  for any $\lambda>0$.
\end{theorem}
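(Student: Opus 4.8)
The plan is to obtain (\ref{P-10}) by passing to the limit in a smoothed version of the identity, and to read off Stratonovich integrability from the $L^2(\Omega)$–convergence that Lemma \ref{L-1} delivers. First I would symmetrize. Taking all smoothing parameters equal, $\epsilon_1=\cdots=\epsilon_n=\epsilon$, the integrand $\prod_{k=1}^n\dot W_\epsilon(B(s_k))$ is symmetric in $(s_1,\dots,s_n)$, so $[0,t]_<^n$ is a fundamental domain and
\begin{align*}
\int_{[0,t]_<^n}ds_1\cdots ds_n\prod_{k=1}^n\dot W_\epsilon\big(B(s_k)\big)=\frac1{n!}\Big(\int_0^t\dot W_\epsilon\big(B(s)\big)ds\Big)^n .
\end{align*}
Hence Lemma \ref{L-1}(i) reads
\begin{align*}
\int_0^\infty e^{-\lambda t}S_{n,\epsilon}\big(g_n(\cdot,t,x)\big)dt=\frac1{n!}\frac\lambda2\Big(\frac12\Big)^n\int_0^\infty e^{-\lambda^2 t/2}\,\E_x\Big[\int_0^t\dot W_\epsilon\big(B(s)\big)ds\Big]^n dt,
\end{align*}
which is (\ref{P-10}) with the noise replaced by its $\epsilon$–smoothing; call it $(\star_\epsilon)$. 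The whole proof consists of sending $\epsilon\to0^+$ in $(\star_\epsilon)$ while extracting integrability.

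The right side of $(\star_\epsilon)$ is the easy side. Writing $A_\epsilon(t)=\int_0^t\dot W_\epsilon(B(s))ds$ and using that, after taking $\E_x$,
\begin{align*}
\E_x\E\big[(A_\epsilon(t)-A_{\epsilon'}(t))^2\big]=\int_0^t\!\!\int_0^t ds\,dr\int_{\R^d}\big(e^{-\epsilon|\xi|^2/2}-e^{-\epsilon'|\xi|^2/2}\big)^2 e^{-|\xi|^2|s-r|/2}\,\mu(d\xi),
\end{align*}
which tends to $0$ by dominated convergence under (\ref{intro-6}), I get $A_\epsilon(t)\to A(t)=\int_0^t\dot W(B(s))ds$ in $L^2$, the limiting conditional variance being the intersection local time that is finite by Remark \ref{R-1}. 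The polynomial moment bounds of Lemma \ref{L-10} then give $\E_x[A_\epsilon(t)^n]\to\E_x[A(t)^n]$ together with a domination in $t$ against the weight $e^{-\lambda^2 t/2}$, so the right side of $(\star_\epsilon)$ converges to the right side of (\ref{P-10}).

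For the left side I would first establish integrability. By Lemma \ref{L-0} it suffices that $\E[S_{n,\epsilon}(g_n(\cdot,t,x))S_{n,\epsilon'}(g_n(\cdot,t,x))]=\sum_{{\cal D}\in\Pi_n}F^{\cal D}_{\epsilon,\epsilon'}(t,t)$ converge as $\epsilon,\epsilon'\to0^+$; each term is $\ge0$ since $\gamma_\delta=\gamma*p_\delta\ge0$ and $g_n\ge0$ (this uses $G\ge0$, hence $d\le3$). The convergence is governed by Lemma \ref{L-1}(iii): after a double Laplace transform, the spectral representation (\ref{P-9}) shows the damping factor $\exp\{-\sum_{\cal D}\frac{\epsilon_j+\epsilon_k}2|\xi_{jk}|^2\}$ increases monotonically to $1$, with finite and smoothing‑independent limit by Remark \ref{R-1}. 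A clean consequence is that $\int_0^\infty e^{-\lambda t}S_{n,\epsilon}(g_n(\cdot,t,x))dt$ is $L^2(\Omega)$–Cauchy: the double Laplace transform (with $\lambda_1=\lambda_2=\lambda$) of $\E[(S_{n,\epsilon}-S_{n,\epsilon'})(g_n(\cdot,t_1,x))(S_{n,\epsilon}-S_{n,\epsilon'})(g_n(\cdot,t_2,x))]$ telescopes to $(1-1-1+1)\times(\text{common limit})=0$. To upgrade this to the fixed–time convergence $S_{n,\epsilon}(g_n(\cdot,t,x))\to S_n(g_n(\cdot,t,x))$ required by Definition \ref{def-2.2}, I would use that $F^{\cal D}_{\epsilon,\epsilon'}(\cdot,\cdot)$ is non‑negative and non‑decreasing in each time variable, and that the limiting kernel is continuous, so the two–dimensional Laplace uniqueness/continuity theorem forces convergence of $F^{\cal D}_{\epsilon,\epsilon'}(t,t)$. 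The bounds of Lemma \ref{L-10} then furnish a uniform estimate $\E[S_{n,\epsilon}(g_n(\cdot,t,x))^2]\le C(t)$, and dominated convergence in $t$ lets me pass $\epsilon\to0^+$ inside $\int_0^\infty e^{-\lambda t}(\cdot)\,dt$, yielding the left side of (\ref{P-10}).

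The hard part is precisely this last transfer from the double–Laplace level, where Lemma \ref{L-1} naturally lives, to the fixed–time $L^2(\Omega)$ statement demanded by the definition of Stratonovich integrability. The obstruction is structural: for fixed $t$ the Fourier symbol of $g_n$ carries the oscillatory wave factors $\sin(|\zeta|s)/|\zeta|$, so the fixed–time spectral integrand is signed and no monotone–convergence argument applies directly. It is the Laplace transform in $t$ that converts these factors into the non‑negative heat–kernel symbol $e^{-|\zeta|^2 s/2}$ — this is exactly identity (\ref{P-6}) — and thereby exposes the monotone damping; the fixed–time conclusion must therefore be recovered indirectly, from non‑negativity, monotonicity in time, continuity of the intersection–local–time limit, and the uniform moment control of Lemma \ref{L-10}.
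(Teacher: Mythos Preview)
Your outline follows the paper's route almost exactly: symmetrize in Lemma \ref{L-1}(i) to get the smoothed identity, pass to the limit on the Brownian side via $L^2$ convergence of $\int_0^t\dot W_\epsilon(B(s))\,ds$ plus the polynomial domination of Lemma \ref{L-10}, and on the wave side reduce Stratonovich integrability to the convergence of $\sum_{\cal D}F^{\cal D}_{\epsilon,\epsilon'}(t,t)$ via Lemma \ref{L-0}, with the Laplace-level convergence supplied by Lemma \ref{L-1}(iii). The paper packages the wave side as a separate result (Theorem \ref{prop}, resting on Lemma \ref{int}) and then simply quotes it inside the proof of Theorem \ref{th-4}, but the logical content is the same as what you describe.

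The one step that is under-argued in your proposal is the continuity of the limiting kernel $F^{\cal D}(t_1,t_2)$, which you need in order to upgrade the Laplace continuity theorem from ``convergence at continuity points'' to ``convergence at every $(t_1,t_2)$''. You appeal to ``continuity of the intersection-local-time limit'', but the intersection local time moments sit on the right side of (\ref{P-5}) as the \emph{integrand} of the double Laplace transform of $F^{\cal D}$, not as $F^{\cal D}$ itself; their continuity in $(t_1,t_2)$ does not directly transfer. The paper closes this gap by an equicontinuity argument (Lemma \ref{int}): one shows
\[
\sup_{\epsilon,\epsilon'}\big\{F^{\cal D}_{\epsilon,\epsilon'}(t_1,t_2)-F^{\cal D}_{\epsilon,\epsilon'}(t_1-\delta_1,t_2-\delta_2)\big\}\longrightarrow 0\quad(\delta_1,\delta_2\to 0^+)
\]
by isolating the last space integral, writing it via (\ref{M-11}) as $\int_{\R^d}\mu(d\xi)\,e^{-(\epsilon_j+\epsilon_k)|\xi|^2/2}\int\frac{\sin(|\xi|s)}{|\xi|}\,ds$ over a thin $s$-strip, and bounding the inner integral by $\min(C\delta,\,2/|\xi|^2)$ uniformly in $\epsilon,\epsilon'$; Dalang's condition then makes the $\mu$-integral small. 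This is exactly the oscillatory wave factor you flag in your last paragraph---the resolution is not to avoid it but to estimate it directly at fixed time, using that a short-time integral of $\sin(|\xi|s)/|\xi|$ is uniformly small in the Dalang sense. Once equicontinuity is in place, everything else in your sketch goes through.
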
  

\begin{proof}  We first explain the time integral appearing on the right hand side of (\ref{P-10}).   It is defined
	as
	$$
	\int_0^t\dot{W}\big(B(s)\big)ds\buildrel\Delta\over
	=\lim_{\varepsilon\to 0^+}
	\int_0^t\dot{W}_\varepsilon\big(B(s)\big)ds\hskip.2in
	\hbox{in ${\cal L}^2(\Omega, {\cal F}, \P_x\otimes\P)$}\,, 
	$$
	where the existence of the limit on the right hand is
	established in Lemma A.1, \cite{Chen-2} under  
	Dalang's condition (\ref{intro-6}). Conditioning on the Brownian motion $B$,
	it is
	a mean zero normal random variable with the variance
	$$
	\int_0^t\!\!\int_0^t\gamma(B_s-B_r)dsdr 
	$$
	whose distribution does not depend on the starting point $x$ of the Brownian motion. So we have
	\begin{align}\label{P-11}
		&\E\otimes\E_x\bigg[\int_0^t\dot{W}\big(B(s)\big)ds\bigg]^n  \\
		&=\begin{cases} \frac{n!}{ 2^{n/2}(n/2)!}
			\E_0\bigg[\int_0^t\!\!\int_0^t\gamma(B_s-B_r)dsdr\bigg]^{\frac{n}{ 2}}&\quad  \hbox{when $n$ is even}\,;\nonumber \\ \\
			0 &\quad \hbox{when $n$ is odd}\,. \end{cases} 
	\end{align}
	The   above $n$-th moment is finite ((\ref{A-1}), Lemma \ref{L-10} below) for all $n=1,2,\cdots$.
	Consequently, the quenched moment
	$$
	\E_x\bigg[\int_0^t\dot{W}\big(B(s)\big)ds\bigg]^n
	$$
	exists almost surely. In addition, the bound provided in (\ref{A-1}) in the
	Lemma \ref{L-10} below makes the right hand side of (\ref{P-10}) well-defined
	for any $\lambda>0$.

	Taking $\epsilon_1=\cdots =\epsilon_{2n}=\delta$ in (\ref{P-3}), we have
	$$
	\begin{aligned}
		&\int_0^\infty e^{-\lambda t}S_{n,\epsilon}\big(g_n(\cdot, t,x)\big) dt
		=\frac{\lambda}{ 2}\Big(\frac{1}{ 2}\Big)^n{1\over n!}
		\int_0^\infty\exp\Big\{-\frac{\lambda^2}{ 2}t\Big\}\E_x\bigg[\int_0^t\dot{W}_\delta\big(B(s)\big)ds\bigg]^ndt\,. 
	\end{aligned}
	$$
	We now let $\delta\to 0^+$ on  both sides. Notice that
	$$
	\lim_{\delta\to 0^+}\E_x\bigg[\int_0^t\dot{W}_\delta\big(B(s)\big)ds\bigg]^n=\E_x\bigg[\int_0^t\dot{W}\big(B(s)\big)ds\bigg]^n\hskip.1in
	\hbox{in ${\cal L}^2(\Omega, {\cal F}, \P)$}\,. 
	$$
	In addition, by Cauchy-Schwartz and Jensen inequalities
	$$
	\begin{aligned}
		&\E\bigg\{\E_x\bigg[\int_0^t\dot{W}_\delta\big(B(s)\big)ds\bigg]^n\bigg\}^2\le\E_0\otimes\E\bigg[\int_0^t\dot{W}_\delta\big(B(s)\big)ds\bigg]^{2n}\\
		&\le\int_{\R^d}p_\epsilon(y)\E_0\otimes\E\bigg[\int_0^t\dot{W}\big(y+B(s)\big)ds\bigg]^{2n}dy=\E_0\otimes\E\bigg[\int_0^t\dot{W}\big(B(s)\big)ds\bigg]^{2n}\\
		&=\E_0\bigg[\int_0^t\!\int_0^t\gamma\big(B(s)-B(r)\big)dsdr\bigg]^n\,. 
	\end{aligned}
	$$
	By Lemma \ref{L-10}, the right hand side has at most a polynomial
	increasing rate in $t$. By the dominated convergence theorem, we have 
	$$
	\lim_{\delta\to 0^+}\int_0^\infty\exp\Big\{-\frac{\lambda^2}{ 2}t\Big\}\E_x\bigg[\int_0^t\dot{W}_\delta\big(B(s)\big)ds\bigg]^ndt=
	\int_0^\infty\exp\Big\{-\frac{\lambda^2}{ 2}t\Big\}\E_x\bigg[\int_0^t\dot{W}\big(B(s)\big)ds\bigg]^ndt
	$$
	in  ${\cal L}^2(\Omega, {\cal F}, \P)$.

	The Stratonovich integrability of $g_n(\cdot, t,x)$ shall be established
	in Theorem \ref{prop} below to make sense of left hand side of (\ref{P-10}).
	By stationarity in $x$, all we need is the following convergence
	$$
	\lim_{\delta\to 0^+}\int_0^\infty e^{-\lambda t}S_{n,\epsilon}\big(g_n(\cdot, t,0)\big) dt
	=\int_0^\infty e^{-\lambda t}S_n\big(g_n(\cdot, t,0)\big) dt \hskip.1in
	\hbox{in ${\cal L}^2(\Omega, {\cal F}, \P)$}.
	$$
	This is given in Part (ii), Theorem \ref{prop}.   \end{proof}

\begin{corollary}\label{co-1}
	Assume  Dalang's condition (\ref{intro-6}). Let $p\ge 1$ and $n\ge 1$ be any integers.
	Given $\lambda_1,\cdots,\lambda_p>0$,
	\begin{align}\label{P-12}
		&\int_{(\R^+)^p}dt_1\cdots dt_p
		\exp\Big\{-\sum_{j=1}^p\lambda_jt_j\Big\}\sum_{l_1+\cdots+l_p=2n}
		\E \prod_{j=1}^p  S_{l_j}\big(g_{l_j}(\cdot, t_j,0)\big)\\
		&=\Big(\frac{1}{ 2}\Big)^{3n}\frac{1}{ n!}\Big(\prod_{j=1}^p\frac{\lambda_j}{ 2}\Big)
		\int_{(\R^+)^p}dt_1\cdots dt_p\exp\Big\{-\frac{1}{ 2}\sum_{j=1}^p\lambda_j^2 t_j\Big\}\nonumber\\
		&\qquad \times\E_0\bigg[\sum_{j,k=1}^p\int_0^{t_j}\!\int_0^{t_k}
		\gamma\big(B_j(s)-B_k(r)\big)dsdr\bigg]^n\,, \nonumber
	\end{align}
	where $B_1(t),\cdots, B_p(t)$ are independent $d$-dimensional Brownian motions starting at 0.
	
\end{corollary}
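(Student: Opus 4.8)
The plan is to compute the multiple Laplace transform on the left-hand side by factorizing it over the $p$ time variables and then invoking the single–variable representation of Theorem \ref{th-4} in each factor. Since $\exp\{-\sum_j\lambda_j t_j\}=\prod_j e^{-\lambda_j t_j}$ and, by the tensorization identity (\ref{M-16}), $\prod_{j=1}^p S_{l_j}(g_{l_j}(\cdot,t_j,0))$ is itself a single multiple Stratonovich integral lying in every $L^q(\Omega)$ (Remark \ref{r.6.3}), I may interchange the finite sum over compositions $l_1+\cdots+l_p=2n$ with the $t$-integration and rewrite the left-hand side as
$$
\sum_{l_1+\cdots+l_p=2n}\E\prod_{j=1}^p\int_0^\infty e^{-\lambda_j t_j}S_{l_j}\big(g_{l_j}(\cdot,t_j,0)\big)\,dt_j .
$$
Applying Theorem \ref{th-4} at $x=0$ to each factor (with an independent Brownian motion $B_j$ attached to the $j$-th factor) converts each Laplace transform into the Brownian-potential expression $\frac{1}{l_j!}\frac{\lambda_j}{2}(\frac12)^{l_j}\int_0^\infty e^{-\lambda_j^2 t_j/2}\,\E_0[\int_0^{t_j}\dot W(B_j(s))\,ds]^{l_j}\,dt_j$.

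Next I would pull all deterministic prefactors out, merge the $p$ single-time integrals into one integral over $(\R^+)^p$, and use the mutual independence of $B_1,\dots,B_p$ and their independence from $\dot W$ to interchange the noise expectation $\E$ with the joint Brownian expectation $\E_0$. Writing $X_j:=\int_0^{t_j}\dot W(B_j(s))\,ds$, the point is that, conditionally on the Brownian paths, $(X_1,\dots,X_p)$ is a centered Gaussian vector with $\E[X_jX_k\mid B]=\int_0^{t_j}\!\int_0^{t_k}\gamma(B_j(s)-B_k(r))\,ds\,dr$, so that the object left to evaluate is the conditional mixed moment $\E[\prod_j X_j^{l_j}\mid B]$, now summed against the weights $\prod_j(1/l_j!)$ produced by Theorem \ref{th-4}.

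The key step is a combinatorial collapse of the sum over compositions. Setting $Y:=\sum_{j=1}^p X_j$, the multinomial theorem gives $\E[Y^{2n}\mid B]=\sum_{l_1+\cdots+l_p=2n}\frac{(2n)!}{l_1!\cdots l_p!}\E[\prod_j X_j^{l_j}\mid B]$, so the weights $\prod_j(1/l_j!)$ assemble precisely into $\frac{1}{(2n)!}\E[Y^{2n}\mid B]$. Since $Y$ is, conditionally on $B$, centered Gaussian with variance $Q:=\Var(Y\mid B)=\sum_{j,k=1}^p\int_0^{t_j}\!\int_0^{t_k}\gamma(B_j(s)-B_k(r))\,ds\,dr$, the Gaussian moment formula (\ref{M-8}) yields $\E[Y^{2n}\mid B]=\frac{(2n)!}{2^n n!}Q^n$, whence $\sum_l \frac{1}{l_1!\cdots l_p!}\E[\prod_j X_j^{l_j}\mid B]=\frac{1}{2^n n!}Q^n$. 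Collecting the surviving constants $(\frac12)^{2n}\cdot\frac{1}{2^n n!}=(\frac12)^{3n}\frac{1}{n!}$ and $\prod_j\frac{\lambda_j}{2}$, and recognizing $\E_0[Q^n]$ as the bracket appearing in (\ref{P-12}), reproduces the right-hand side exactly.

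The main obstacle is rigorously justifying the two interchanges — the composition-sum / time-integration swap and the $\E$/$\E_0$ Fubini swap — together with carrying the $\delta\to0$ smoothing of Theorem \ref{th-4} through the product of factors. All of these rest on the polynomial-in-$t$ moment bounds for the Brownian intersection local times furnished by Lemma \ref{L-10}: they guarantee that $\E_0[Q^n]\,e^{-\frac12\sum_j\lambda_j^2 t_j}$ is integrable over $(\R^+)^p$, and, via the tensor identity (\ref{M-16}) and the $L^q$-convergence of Remark \ref{r.6.3}, that the product of the factor-wise $L^2$ limits is the genuine limit of the products, so that Theorem \ref{th-4} may be applied inside the product expectation. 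Once these dominated-convergence and Fubini checks are secured, the combinatorial identity above is the whole content of the statement.
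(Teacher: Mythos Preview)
Your proposal is correct and follows essentially the same route as the paper: factorize the multiple Laplace transform, apply Theorem \ref{th-4} (i.e., (\ref{P-10})) in each factor, reassemble via the multinomial identity, and then invoke the Gaussian moment formula (\ref{P-13}) for $Y=\sum_j\int_0^{t_j}\dot W(B_j(s))\,ds$ conditionally on the Brownian paths. Your added discussion of the Fubini and dominated-convergence justifications (via Lemma \ref{L-10}) is more explicit than the paper's, but the argument is the same.
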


\begin{proof}  By Theorem \ref{th-4},
	$$
	\begin{aligned}
		&\int_{(\R^+)^p}dt_1\cdots dt_p\exp\Big\{-\sum_{j=1}^p\lambda_jt_j\Big\}\sum_{l_1+\cdots+l_p=2n}
		\prod_{j=1}^pS_{l_j}\big(g_{l_j}(\cdot, t,0)\big)\\
		&=\sum_{l_1+\cdots+l_p=2n}\prod_{j=1}^p\int_0^\infty e^{-\lambda_jt}S_{l_j}\big(g_{l_j}(\cdot, t_j,0)\big)dt\\
		&=\sum_{l_1+\cdots+l_p=2n}\bigg(\prod_{j=1}^p\frac{\lambda_j}{ 2}\Big(\frac{1}{ 2}\Big)^{l_j}\bigg)
		\prod_{j=1}^p\frac{1}{l_j!}\int_0^\infty\!\! dt e^{-\lambda_j^2 t/2}
		\!\E_0\bigg[\!\int_0^t \dot{W}\big(B(s)\big)ds\bigg]^{l_j}\\
		&=\Big(\frac{1}{2}\Big)^{2n}\Big(\prod_{j=1}^p\frac{\lambda_j}{ 2}\Big)
		\int_{(\R^+)^p}dt_1\cdots dt_p\exp\Big\{-\frac{1}{ 2}\sum_{j=1}^p\lambda_j^2t_j\Big\}
		\frac{1}{ (2n)!}\E_0\bigg[\sum_{j=1}^p\int_0^{t_j}\dot{W}\big(B_j(s)\big)ds\bigg]^{2n}
		\,, \end{aligned}
	$$
	where the last step follows from Newton's multi-nominal  formula. 
	By the fact that conditioning on the Brownian motions,
	$$
	\sum_{j=1}^p\int_0^{t_j}\dot{W}\big(B_j(s)\big)ds
	$$
	is normal with zero mean and the variance
	$$
	\sum_{j,k=1}^p\int_0^{t_j}\!\int_0^{t_k}\gamma\big(B_j(s)-B_k(r)\big)dsdr
	$$
	we have
	\begin{align}\label{P-13}
		\E\bigg[\sum_{j=1}^p\int_0^{t_j}\dot{W}\big(B_j(s)\big)ds\bigg]^{2n}
		=\frac{(2n)!}{2^n n!}\bigg[\sum_{j,k=1}^p\int_0^{t_j}\!\int_0^{t_k}\gamma\big(B_j(s)-B_k(r)\big)dsdr\bigg]^n\,. 
	\end{align}
	Thus, we have proved (\ref{P-12}). 
\end{proof}

\begin{corollary} \label{co-2}
	
	\begin{enumerate}
		\item[(1)] For any $\lambda_1,\lambda_2>0$
		\begin{align}\label{P-14}
			&\int_0^\infty \!\!\int_0^\infty dt_1dt_2 e^{-\lambda_1t_1-\lambda_2t_2}\E S_{n}\big(g_n(\cdot, t_1, 0)\big)S_{n}\big(g_n(\cdot, t_2, 0)\big)\\
			&=\Big({1\over 4}\Big)^n{\lambda_1\lambda_2\over 4}{1\over (n!)^2}\E\otimes\E_0\bigg[\int_0^{t_1}\dot{W}\big(B_1(s)ds\bigg]^n\bigg[\int_0^{t_2}\dot{W}\big(B_2(s)\big)ds\bigg]^n\,. 
			\nonumber
		\end{align}
		\item[(2)] For any $\epsilon=(\epsilon_1,\cdots,\epsilon_n)$ and $\epsilon'=(\epsilon_{n+1},\cdots,\epsilon_{2n})$
		\begin{align}\label{P-15}
			&\int_0^\infty \!\!\int_0^\infty dt_1dt_2 e^{-\lambda_1t_1-\lambda_2t_2}\E S_{n,\epsilon}\big(g_n(\cdot, t_1, 0)\big)S_{n,\epsilon'}\big(g_n(\cdot, t_2, 0)\big)\\
			&\le \int_0^\infty \!\!\int_0^\infty dt_1dt_2 e^{-\lambda_1t_1-\lambda_2t_2}\E S_{n}\big(g_n(\cdot, t_1, 0)\big)S_{n}\big(g_n(\cdot, t_2, 0)\big)\,. \nonumber
		\end{align}
	\end{enumerate}
\end{corollary}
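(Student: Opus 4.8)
The plan is to handle the two parts independently: (1) follows directly from the moment representation of Theorem \ref{th-4}, while (2) follows from the partition-wise estimates of Lemma \ref{L-1} together with the Gaussian pairing formula \eqref{M-8}.

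For Part (1) I would apply the identity \eqref{P-10} once with parameter $\lambda_1$ and a Brownian motion $B_1$, and once with $\lambda_2$ and an independent copy $B_2$. Each factor $\int_0^\infty e^{-\lambda_j t_j}S_n\big(g_n(\cdot,t_j,0)\big)\,dt_j$ is a well-defined element of ${\cal L}^2(\Omega,{\cal F},\P)$, namely a functional of $\dot W$ alone since the Brownian motion has been integrated out by $\E_0$. Forming the product of the two factors and taking the expectation $\E$ over $\dot W$, the task reduces to interchanging $\E$ with the double time integral and then merging the two Brownian expectations. The first interchange is a Fubini step; the second uses that $B_1$, $B_2$ and $\dot W$ are mutually independent, so that, conditionally on $\dot W$, the product of the quenched moments $\E_0\big[\int_0^{t_1}\dot W(B_1(s))\,ds\big]^n\cdot\E_0\big[\int_0^{t_2}\dot W(B_2(s))\,ds\big]^n$ equals the single joint expectation over $(B_1,B_2)$ of the product of the two bracketed quantities. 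Collecting the prefactors $\frac{1}{n!}\frac{\lambda_j}{2}\big(\frac12\big)^n$ from the two applications of \eqref{P-10} produces $\big(\frac14\big)^n\frac{\lambda_1\lambda_2}{4}\frac{1}{(n!)^2}$, together with the Gaussian weight $\exp\{-\tfrac12(\lambda_1^2 t_1+\lambda_2^2 t_2)\}$ under the double time integral, which establishes \eqref{P-14}.

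For Part (2) I would first expand the smoothed covariance. Applying the pairing formula \eqref{M-8} to $\prod_{k=1}^{2n}\dot W_{\epsilon_k}(\cdot)$ and using $\E[\dot W_{\epsilon_j}(x)\dot W_{\epsilon_k}(y)]=\gamma_{\epsilon_j+\epsilon_k}(x-y)$ gives
$$\E\,S_{n,\epsilon}\big(g_n(\cdot,t_1,0)\big)S_{n,\epsilon'}\big(g_n(\cdot,t_2,0)\big)=\sum_{{\cal D}\in\Pi_n}F^{\cal D}_{\epsilon,\epsilon'}(t_1,t_2),$$
with $F^{\cal D}_{\epsilon,\epsilon'}$ as in \eqref{P-2}. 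I would then take the double Laplace transform, interchange it with the finite sum over ${\cal D}$, and bound each term by Lemma \ref{L-1}(ii). The crucial observation is that the upper bound supplied by \eqref{P-4} coincides exactly with the limit identified in \eqref{P-5}: the estimate of Lemma \ref{L-1}(ii) is obtained from the identity \eqref{P-9} by discarding the factor $\exp\{-\sum_{(j,k)\in{\cal D}}\tfrac{\epsilon_j+\epsilon_k}{2}|\xi_{j,k}|^2\}\le 1$, which is precisely the factor that tends to $1$ as $\epsilon,\epsilon'\to 0$. Summing the per-partition bound over ${\cal D}$, and recognizing that the sum of the limits in \eqref{P-5} is itself the Laplace transform of $\E\,S_n S_n$ (because $S_n$ is the ${\cal L}^2$-limit of $S_{n,\epsilon}$, so $\E\,S_{n,\epsilon}S_{n,\epsilon'}\to\E\,S_n S_n$ pointwise and then under the time integral), I obtain the inequality \eqref{P-15}.

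The routine part consists of the Fubini and dominated-convergence arguments, legitimized throughout by the polynomial-in-$(t_1,t_2)$ moment bounds for the Brownian intersection local times recorded in Lemma \ref{L-10} and invoked in Remark \ref{R-1}; these make every integrand absolutely integrable against the Gaussian weights and uniformly controlled in $\epsilon$. The only genuinely substantive point is the matching of the bound in \eqref{P-4} with the limit in \eqref{P-5}. Once that coincidence is read off from \eqref{P-9}, Part (2) is immediate, since smoothing can only shrink each partition contribution while the unsmoothed total is exactly the sum of the corresponding limits.
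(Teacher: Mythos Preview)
Your Part~(1) is correct and is exactly what the paper does: it simply says ``(\ref{P-14}) is a direct consequence of Theorem~\ref{th-4}.''

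Your Part~(2) is also correct, but the route differs from the paper's. After the common first two steps (expand $\E S_{n,\epsilon}S_{n,\epsilon'}=\sum_{\cal D}F^{\cal D}_{\epsilon,\epsilon'}$ via \eqref{M-8} and bound each Laplace transform by \eqref{P-4}), the paper does \emph{not} invoke the coincidence of the bound with the limit \eqref{P-5}. Instead it computes the sum $\sum_{\cal D}$ of the right-hand sides of \eqref{P-4} directly: it symmetrizes the time-ordered integral over permutations fixing $\{1,\dots,n\}$ and $\{n+1,\dots,2n\}$ to replace $\int_{[0,t_1]_<^n\times[0,t_2]_<^n}$ by $\frac{1}{(n!)^2}\int_{[0,t_1]^n\times[0,t_2]^n}$, factors the unordered integral as $\prod_{(j,k)\in{\cal D}}\int_0^{t_{v(j)}}\!\int_0^{t_{v(k)}}\gamma(B_{v(j)}(s)-B_{v(k)}(r))\,ds\,dr$, and then recognizes the resulting $\sum_{\cal D}\prod_{(j,k)}$ via \eqref{M-8} (applied to the conditionally Gaussian vector built from $\int_0^{t_1}\dot W(B_1)$ and $\int_0^{t_2}\dot W(B_2)$) as $\E\big[\int_0^{t_1}\dot W(B_1)\big]^n\big[\int_0^{t_2}\dot W(B_2)\big]^n$. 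That matches the right-hand side of \eqref{P-14} and finishes. Your route is more conceptual and avoids this combinatorics, at the cost of an extra forward reference (to Theorem~\ref{prop} or Lemma~\ref{int}) and an interchange of limit and Laplace transform. One caution: the justification you cite for that interchange (Lemma~\ref{L-10}) controls the \emph{Laplace transform} of $F^{\cal D}_{\epsilon,\epsilon'}$, not $F^{\cal D}_{\epsilon,\epsilon'}(t_1,t_2)$ pointwise; to get a dominating function you still need the monotonicity of $F^{\cal D}_{\epsilon,\epsilon'}$ in $(t_1,t_2)$ (as used in the proof of Lemma~\ref{int}) to convert the Laplace bound into a pointwise exponential bound, or else invoke \eqref{P-17} directly.
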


\proof (\ref{P-14}) is a direct consequence of Theorem \ref{th-4}.
By the definition of $S_{n,\epsilon}\big((g_n(\cdot, t,x)\big)$ given in (\ref{P-1}),
$$
\begin{aligned}
	&\E S_{n,\epsilon}\big(g_n(\cdot, t_1, 0)\big)S_{n,\epsilon'}\big(g_n(\cdot, t_2, 0)\big)\\
	&=\E\int_{(\R^d)^{2n}}dx_1\cdots dx_{2n}g_n(x_1,\cdots, x_n, t_1,0)g_n(x_{n+1},\cdots,x_{2n}, t_2, 0)\prod_{k=1}^{2n}\dot{W}_{\epsilon_k}(x_k)\\
	&=\sum_{{\cal D}\in\Pi_n}\int_{(\R^d)^{2n}}dx_1\cdots dx_{2n}\bigg(\prod_{(j,k)\in{\cal D}}\gamma_{\epsilon_j+\epsilon_k}(x_j-x_k)\bigg)\\
	&\qquad \times g_n(x_1,\cdots, x_n, t_1,0)g_n(x_{n+1},\cdots,x_{2n}, t_2, 0)\\
	&=\sum_{{\cal D}\in\Pi_n}
	F_{\epsilon,\epsilon'}^{\cal D}(t_1,t_2)\,, 
\end{aligned}
$$
where the second equality follows from (\ref{M-8}) with $g_k=\dot{W}_{\epsilon_k}(x_k)$ and $F_{\epsilon,\epsilon'}^{\cal D}(t_1,t_2)$ is given in (\ref{P-2}).
By (\ref{P-3}),  we see 
$$
\begin{aligned}
	&\int_0^\infty \!\!\int_0^\infty dt_1dt_2 e^{-\lambda_1t_1-\lambda_2t_2}\E S_{n,\epsilon}\big(g_n(\cdot, t_1, 0)\big)S_{n,\epsilon'}\big(g_n(\cdot, t_2, 0)\big)\\
	&\le {\lambda_1\lambda_2\over 4}\Big({1\over 2}\Big)^{2n}
	\int_0^\infty\!\!\int_0^\infty dt_1dt_2
	\exp\Big\{-{\lambda_1^2 t_1+\lambda_2^2 t_2\over 2}\Big\}\\
	&\qquad \times\E_0 \sum_{{\cal D}\in\Pi_n}\int_{[0,t_1]_<^n\times[0,t_2]_<^n}ds_1\cdots ds_{2n}\prod_{(j,k)\in{\cal D}}
	\gamma\big(B_{v(j)}(s_j)-B_{v(k)}(s_k)\big)\,. 
\end{aligned}
$$
For any  permutation  $\sigma$ on $\{1,\cdots, 2n\}$ with $\sigma(\{1,\cdots, n\})=\{1,\cdots, n\}$ and $\sigma(\{n+1,\cdots, 2n\})=\{n+1,\cdots, 2n\}$
$$
\begin{aligned}
	&\sum_{{\cal D}\in\Pi_n}\int_{[0,t_1]_<^n\times[0,t_2]_<^n}ds_1\cdots ds_{2n}\prod_{(j,k)\in{\cal D}}
	\gamma\big(B_{v(j)}(s_{\sigma(J)})-B_{v(k)}(s_{\sigma(k)})\big)\\
	&=\sum_{{\cal D}\in\Pi_n}\int_{[0,t_1]_<^n\times[0,t_2]_<^n}ds_1\cdots ds_{2n}\prod_{(j,k)\in{\cal D}}
	\gamma\big(B_{v(j)}(s_j)-B_{v(k)}(s_k)\big)\,. 
\end{aligned}
$$
Therefore
$$
\begin{aligned}
	&\sum_{{\cal D}\in\Pi_n}\int_{[0,t_1]_<^n\times[0,t_2]_<^n}ds_1\cdots ds_{2n}\prod_{(j,k)\in{\cal D}}
	\gamma\big(B_{v(j)}(s_j)-B_{v(k)}(s_k)\big)\\
	&={1\over (n!)^2}\sum_{{\cal D}\in\Pi_n}\int_{[0,t_1]^n\times[0,t_2]^n}ds_1\cdots ds_{2n}\prod_{(j,k)\in{\cal D}}
	\gamma\big(B_{v(j)}(s_j)-B_{v(k)}(s_k)\big)\,. 
\end{aligned}
$$
A crucial observation is that
$$
\begin{aligned}
	&\int_{[0,t_1]^n\times[0,t_2]^n}ds_1\cdots ds_{2n}\prod_{(j,k)\in{\cal D}}
	\gamma\big(B_{v(j)}(s_j)-B_{v(k)}(s_k)\big)\\
	&=\prod_{(j,k)\in{\cal D}}
	\int_0^{t_{v(j)}}\!\int_0^{t_{v(k)}}
	\gamma\big(B_{v(j)}(s)-B_{v(k)}(r))\big)dsdr\,. 
\end{aligned}
$$
Applying (\ref{M-8}) conditionally on the Brownian motions to the
$2n$-dimensional
normal vector
$$
\bigg(\overbrace{\int_0^{t_1}\dot{W}\big(B_1(s)\big)ds,\cdots,
	\int_0^{t_1}\dot{W}\big(B_1(s)\big)ds}^n,\hskip.1in
\overbrace{\int_0^{t_2}\dot{W}\big(B_2(s)\big)ds, \cdots,
	\int_0^{t_2}\dot{W}\big(B_2(s)\big)ds}^n\bigg)
$$
the right hand side is equal to
$$
\E\bigg[\int_0^{t_1}\dot{W}\big(B_1(s)\big)ds\bigg]^n\bigg[\int_0^{t_2}
\dot{W}\big(B_2(s)\big)ds\bigg]^n\,. 
$$
In summary
$$
\begin{aligned}
	&\int_0^\infty \!\!\int_0^\infty dt_1dt_2 e^{-\lambda_1t_1-\lambda_2t_2}\E S_{n,\epsilon}\big(g_n(\cdot, t_1, 0)\big)S_{n,\epsilon'}\big(g_n(\cdot, t_2, 0)\big)\\
	&\qquad \le {\lambda_1\lambda_2\over 4}\Big({1\over 2}\Big)^{2n}{1\over(n!)^2}
	\int_0^\infty\!\!\int_0^\infty dt_1dt_2
	\exp\Big\{-{\lambda_1^2 t_1+\lambda_2^2 t_2\over 2}\Big\}\\
	&\qquad\qquad  \times\E_0\otimes\E\bigg[\int_0^{t_1}\dot{W}\big(B_1(s)\big)ds\bigg]^n\bigg[\int_0^{t_2}\dot{W}\big(B_2(s)\big)ds\bigg]^n\,. 
\end{aligned}
$$
Finally, (\ref{P-15}) follows from (\ref{P-14}). \qed

\subsection{Stratonovich integrability and Fubini's theorem}

Recall that the function $F_{\epsilon,\epsilon'}^{\cal D}(t_1,t_2)$ is defined in (\ref{P-2}).

\begin{lemma}\label{int}
	Under   Dalang's condition (\ref{intro-6}), the limit
	\begin{align}\label{P-16}
		\lim_{\epsilon,\epsilon'\to 0}F_{\epsilon,\epsilon'}^{\cal D}(t_1,t_2)
	\end{align}
	exists for any $n\ge 1$, $t_1, t_2>0$ and any pair partition ${\cal D}\in \Pi_n$.
	Further, the limiting function is continuous in $t_1, t_2$.
\end{lemma}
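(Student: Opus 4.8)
The plan is to reduce $F^{\cal D}_{\epsilon,\epsilon'}(t_1,t_2)$ to a single spectral integral in which all the $\epsilon$-dependence sits in a harmless Gaussian damping factor, and then to pass to the limit by dominated convergence; the very same representation will give continuity in $(t_1,t_2)$. First I would insert the spectral representation $\gamma_{\epsilon_j+\epsilon_k}(x)=\int_{\R^d}e^{i\xi\cdot x}e^{-\frac{\epsilon_j+\epsilon_k}{2}|\xi|^2}\mu(d\xi)$ into the definition (\ref{P-2}) and carry out the $dx_1\cdots dx_{2n}$ integration exactly as in the proof of Lemma \ref{L-1}, except at fixed $t_1,t_2$ (i.e. without the Laplace transform in time). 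Using (\ref{M-11}) to Fourier-transform each wave kernel inside the two copies of $g_n$, this yields
\[
F^{\cal D}_{\epsilon,\epsilon'}(t_1,t_2)=\int_{(\R^d)^n}\Big(\prod_{(j,k)\in{\cal D}}\mu(d\xi_{jk})\,e^{-\frac{\epsilon_j+\epsilon_k}{2}|\xi_{jk}|^2}\Big)\,\widehat g_n(\theta^{1};t_1)\,\widehat g_n(\theta^{2};t_2),
\]
where each $\xi_{jk}$ enters the induced momenta $\theta^{1},\theta^{2}$ with a $+$ sign in the slot of $j$ and a $-$ sign in that of $k$, and $\widehat g_n(\cdot;t)$ is the explicit time-ordered product of $\sin/|\cdot|$ factors coming from (\ref{M-11}). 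Each damping factor lies in $(0,1]$ and increases to $1$ as $\epsilon,\epsilon'\to0$.

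The heart of the argument is the domination bound
\[
\int_{(\R^d)^n}\prod_{(j,k)\in{\cal D}}\mu(d\xi_{jk})\,\big|\widehat g_n(\theta^{1};t_1)\big|\,\big|\widehat g_n(\theta^{2};t_2)\big|<\infty,
\]
and I expect this to be the main obstacle. The naive estimate $|\sin u|\le 1$ only produces one factor $|\xi|^{-1}$ per wave kernel, which is \emph{not} enough to integrate against $\mu$ in dimensions $d\le 2$; one must keep the oscillatory cancellation of the time-ordered integral. The structural reason the bound nonetheless holds is that, since ${\cal D}$ is a \emph{pair} partition, every spectral variable $\xi_{jk}$ occurs either twice inside one of the two factors $\widehat g_n$ (when $j,k$ lie in the same block $\{1,\dots,n\}$ or $\{n+1,\dots,2n\}$) or once in each factor (when $(j,k)$ is a cross pair); in all cases the two occurrences combine to furnish a full decay of order $(1+|\xi_{jk}|^2)^{-1}$, which is exactly what Dalang's condition (\ref{intro-6}) integrates. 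I would organize this by splitting ${\cal D}$ into within-block and cross pairs, handling the cross pairs by the Cauchy--Schwarz inequality in the $\mu$-variables and the within-block pairs by their internal double occurrence, and I would lean on the wave-kernel $L^2$ estimate together with the intersection-local-time moment bounds already quoted in Remark \ref{R-1} (Lemma \ref{L-10}) to make the constants explicit and, crucially, increasing in $t_1,t_2$, so that the bound dominates uniformly on any rectangle $[0,T]^2$.

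With the dominating function in hand the conclusion is routine. Dominated convergence in $\epsilon,\epsilon'$ shows that the limit (\ref{P-16}) exists and equals
\[
F^{\cal D}(t_1,t_2)=\int_{(\R^d)^n}\prod_{(j,k)\in{\cal D}}\mu(d\xi_{jk})\,\widehat g_n(\theta^{1};t_1)\,\widehat g_n(\theta^{2};t_2).
\]
For continuity, I note that $\widehat g_n(\cdot;t)$ is continuous (in fact smooth) in $t$, since it is a finite time-ordered integral of bounded continuous integrands; because the integrand is dominated by the same function, which is locally bounded in $(t_1,t_2)$, a further application of dominated convergence lets me pass a limit $(t_1,t_2)\to(t_1^0,t_2^0)$ through the $\mu$-integral. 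Hence $F^{\cal D}$ is continuous on $(\R^+)^2$, completing the proof.
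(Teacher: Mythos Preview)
Your spectral representation in step~1 is correct, and the idea of isolating the $\epsilon$-dependence in Gaussian dampers is natural. However, the crux of your argument---the absolute integrability
\[
\int_{(\R^d)^n}\prod_{(j,k)\in{\cal D}}\mu(d\xi_{jk})\,\big|\widehat g_n(\theta^{1};t_1)\big|\,\big|\widehat g_n(\theta^{2};t_2)\big|<\infty
\]
---is asserted rather than proved, and your heuristic does not close the gap. The claim that ``the two occurrences combine to furnish a full decay of order $(1+|\xi_{jk}|^2)^{-1}$'' is not justified: integrating the innermost time variable in $\widehat g_n$ gives a factor $\frac{1-\cos(|\eta_n|(t-s_{n-1}))}{|\eta_n|^2}$ with the desired $(1+|\eta_n|^2)^{-1}$ decay, but for the remaining partial sums $\eta_k=\sum_{m=k}^n\theta_m$ you only recover one power $|\eta_k|^{-1}$ from $|\sin(|\eta_k|s)|\le 1$, which is insufficient against $\mu$. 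The coupling of the $\xi_{jk}$ through the partial sums means the variables are not separated, and your proposed Cauchy--Schwarz splitting between within-block and cross pairs does not obviously disentangle them. Finally, your appeal to Lemma~\ref{L-10} is misplaced: those moment bounds concern the Brownian intersection local times that arise \emph{after} the Laplace transform replaces the wave kernel by the heat kernel via (\ref{P-6}); they say nothing about the fixed-$t$ wave integrand you need to dominate.

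The paper's proof takes a genuinely different route that sidesteps this obstacle. Rather than establishing absolute integrability at fixed $(t_1,t_2)$, it works in the Laplace domain: by (\ref{P-5}) the Laplace transform of $F^{\cal D}_{\epsilon,\epsilon'}$ converges (there the wave kernels become heat kernels and the spectral integrand is a nonnegative Gaussian, so domination is trivial). The continuity theorem for Laplace transforms then gives weak convergence of $F^{\cal D}_{\epsilon,\epsilon'}$ to some limit $F^{\cal D}$. To upgrade this to pointwise convergence everywhere and continuity of the limit, the paper proves the uniform equicontinuity estimate
\[
\lim_{\delta_1,\delta_2\to 0^+}\sup_{\epsilon,\epsilon'}\Big\{F^{\cal D}_{\epsilon,\epsilon'}(t_1,t_2)-F^{\cal D}_{\epsilon,\epsilon'}(t_1-\delta_1,t_2-\delta_2)\Big\}=0,
\]
by peeling off the last time variable, estimating a \emph{single} $\xi$-integral via Dalang's condition, and bounding the remainder uniformly in $\epsilon,\epsilon'$ through the Laplace-transform monotonicity trick. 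This incremental estimate is far more tractable than your global domination bound and is where the real work lies.
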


\begin{remark} In view of (\ref{M-15}), Lemma \ref{int} justifies the definition
	$$
	\begin{aligned}
		&\int_{(\R^d)^{2n}}dx_1\cdots dx_{2n}\bigg(\prod_{(j,k)\in {\cal D}}
		\gamma(x_j-x_k)\bigg)
		g_n(x_1,\cdots, x_n, t_1,x)g_n(x_{n+1},\cdots, x_{2n}, t_2,0)\\
		&\buildrel \Delta\over =\lim_{\epsilon\to 0^+}\int_{(\R^d)^{2n}}
		dx_1\cdots dx_{2n}\bigg(\prod_{(j,k)\in {\cal D}}
		\gamma_\epsilon(x_j-x_k)\bigg)
		g_n(x_1,\cdots, x_n, t_1,x)g_n(x_{n+1},\cdots, x_{2n}, t_2 ,0)\,. 
	\end{aligned}
	$$
\end{remark}

\proof Clearly, $F^{\cal D}_{\epsilon,\epsilon'}(t_1,t_2)$ is
non-negative, non-decreasing and continuous
on $\R^+\times\R^+$. 

By (\ref{P-5}), Lemma \ref{L-1}, the limit
$$
\lim_{\epsilon,\epsilon'\to 0}\int_0^\infty\!\!\int_0^\infty e^{-\lambda_1 t_1-\lambda_2 t_2}F_{\epsilon,\epsilon'}^{\cal D}(t_1,t_2)dt_1dt_2
$$
exists for any $\lambda_1,\lambda_2>0$. 

By continuity theorem for Laplace  transform   \cite[Theorem 5.2.2]{K}, therefore, the function
$F^{\cal D}_{\epsilon}(t_1, t_2)$ weakly converges
to a non-negative, non-decreasing and right continuous function
$F^{\cal D}(t_1,t_2)$ on $(\R^+)^2$, i.e.,
$$
\lim_{\epsilon,\epsilon'\to 0}F^{\cal D}_{\epsilon,\epsilon'}(t_1, t_2)=F^{\cal D}(t_1, t_2)
$$
for any continuous point $(t_1, t_2)$ of $F^{\cal D}$ and
\begin{align}\label{P-17}
	\lim_{\epsilon,\epsilon'\to 0}\int_0^\infty\!\!\int_0^\infty e^{-\lambda_1 t_1-\lambda_2 t_2}
	F^{\cal D}_{\epsilon,\epsilon'}(t_1,t_2)dt_1dt_2=\int_0^\infty\!\!\int_0^\infty e^{-\lambda_1 t_1-\lambda_2 t_2}
	F^{\cal D}(t_1,t_2)dt_1dt_2\,. 
\end{align}
(Actually, Theorem 5.22, \cite{K}
is stated for probability measures on $(\R^+)^d$. The case of general measures on
$(\R^+)^d$ can be derived as in the proof of  \cite[Theorem 2a, Section 1, Chapter XIII]{Feller}.
Although this theorem 
only considers measures on $\R^+$ 
its extension to $(\R^+)^2$ is routine).

To establish the existence for
the limit in (\ref{P-16}) and therefore to complete the proof,
all we need is to show that $F^{\cal D}(t_1, t_2)$ is continuous   on
$(\R^+)^2$ so
\begin{align}\label{P-18}
	\lim_{\epsilon,\epsilon'\to 0}F^{\cal D}_{\epsilon,\epsilon'}(t_1, t_2)=F^{\cal D}(t_1, t_2)\,, \hskip.2in \forall t_1, t_2>0\,. 
\end{align}

We will do it by establishing
\begin{align}\label{P-19}
	\lim_{\delta_1,\delta_2\to 0^+}\sup_{\epsilon,\epsilon'}\big\{F^{\cal D}_{\epsilon,\epsilon'}(t_1, t_2)
	-F^{\cal D}_{\epsilon,\epsilon'}(t_1-\delta_1, t_2-\delta_2)\big\}=0\,.
\end{align}

Write
$$
\begin{aligned}
	&F^{\cal D}_{\epsilon,\epsilon'}(t_1, t_2)\\
	&=\int_{(\R^d)^{2n}}dx_1\cdots dx_{2n}\bigg(\prod_{(j,k)\in {\cal D}}\gamma_{\epsilon_j+\epsilon_k}(x_j-x_k)\bigg)
	\int_{[0, t_1]_<^n\times[0, t_2]_<^n}ds_1\cdots ds_{2n}\\
	&\qquad \times \bigg(G(s_1,x_1)\prod_{l=2}^n G(s_l-s_{l-1}, x_l-x_{l-1}\bigg)\\
	&\qquad\quad \times 
	\bigg(G(s_{n+1},x_{n+1})\prod_{k=n+2}^{2n} G(s_l-s_{l-1}, x_k-x_{k-1}\bigg)\\
	&={\cal E}_{\epsilon,\epsilon'}([0,t_1]_<^n\times [0,t_2]_<^n)\hskip.2in \hbox{(say)}\,. 
\end{aligned}
$$
To prove (\ref{P-19}), all we need is
$$
\lim_{\delta_1,\delta_2\to 0^+}\sup_{\epsilon,\epsilon'}{\cal E}_{\epsilon,\epsilon'}\Big(\big\{[0,t_1]_<^n\times [0,t_2]_<^n\big\}\setminus \big\{[0, t_1-\delta_1]_<^n\times [0,t_2-\delta_1]_<^n\big\}\Big)=0\,. 
$$

By the extension $G(t,x)=0$ for $t<0$, we can extend 
${\cal E}_{\epsilon,\epsilon'}(\cdot)$
from a measure on $(\R^+)_<^n\times(\R^+)_<^n$ to a measure
on $(\R^+)^n\times(\R^+)^n$
in an obvious way. Then by the relation
$$
\begin{aligned}
	&\big\{[0,t_1]_<^n\times [0,t_2]_<^n\big\}\setminus \big\{[0, t_1-\delta_1]_<^n\times [0,t_2-\delta_1]_<^n\big\}\\
	&\qquad \subseteq\Big([0,t_1]_<^n\times\big\{[0, t_2]_<^n\setminus [0,t_2-\delta_2]_<^n\big\}\Big)
	\cup\Big(\big\{[0, t_1]_<^n\setminus [0, t_1-\delta_1]_<^n\big\}\times [0, t_2]_<^n\Big)\\
	&\qquad \subseteq \Big([0,t_1]_<^n\times\big\{[0, t_2]_<^{n-1}\times [t_2-\delta_2, t_2]\big\}\Big)
	\cup\Big(\big\{[0, t_1]_<^{n-1}\times [t_1-\delta_1, t_1]\big\}\times [0, t_2]_<^n\Big)
\end{aligned}
$$
the problem is further reduced to
\begin{align}\label{P-20}
	\lim_{\delta\to 0^+}\sup_{\epsilon,\epsilon'}{\cal E}_{\epsilon,\epsilon'}\Big([0,t_1]_<^n\times[0, t_2]_<^{n-1}\times [t_2-\delta, t_2]\Big)
	=0
\end{align}
and
\begin{align}\label{P-21}
	\lim_{\delta\to 0^+}\sup_{\epsilon,\epsilon'}{\cal E}_{\epsilon,\epsilon'}\Big([0, t_1]_<^{n-1}\times [t_1-\delta, t_1]\times [0, t_2]_<^n\Big)
	=0\,. 
\end{align}

Due to similarity, we only prove (\ref{P-20}). By Fubini's theorem
$$
\begin{aligned}
	&{\cal E}_{\epsilon,\epsilon'}\Big([0,t_1]_<^n\times[0, t_2]_<^{n-1}\times [t_2-\delta, t_2]\Big)\\
	&=\int_{(\R^d)^{2n-1}}dx_1\cdots dx_{2n-1}\bigg(\prod_{(j,k)\in {\cal D}'}\gamma_{\epsilon_j+\epsilon_k}(x_j-x_k)\bigg)
	\int_{[0, t_1]_<^n\times[0, t_2]_<^{n-1}}ds_1\cdots ds_{2n-1}\\
	&\qquad \times \bigg(G(s_1,x_1)\prod_{l=2}^n G(s_l-s_{l-1}, x_l-x_{l-1}\bigg)
	\bigg(G(s_{n+1},x_{n+1})\prod_{k=n+2}^{2n-1} G(s_l-s_{l-1}, x_k-x_{k-1}\bigg)\\
	&\qquad \times\int_{\R^d}dx_{2n}\gamma_{\epsilon_{j_0}+\epsilon_{2n}}(x_{2n}-x_{j_0})\int_{t_2-\delta}^{t_2}G(s_{2n}-s_{2n-1}, x_{2n}-x_{2n-1})ds_{2n}\,, 
\end{aligned}
$$
where $1\le j_0\le 2n-1$ satisfies $(j_0, 2n)\in{\cal D}$ and where
${\cal D}'\in \Pi_{n-1}$ is given by
${\cal D}'={\cal D}\setminus(j_0, 2n)$.

By Fourier transform and Fubini's theorem
$$
\begin{aligned}
	&\int_{\R^d}dx_{2n}\gamma_{\epsilon_{j_0}+\epsilon_{2n}}(x_{2n}-x_{j_0})\int_{t_2-\delta}^{t_2}G(s_{2n}-s_{2n-1}, x_{2n}-x_{2n-1})ds_{2n}\\
	&\qquad =\int_{\R^d}dx_{2n}\gamma_{\epsilon_{j_0}+\epsilon_{2n}}(x_{2n}-x_{j_0})\int_{0\vee (t_2-s_{2n-1}-\delta)}^{t_2-s_{2n-1}}G(s, x_{2n}-x_{2n-1})ds\\
	&\qquad =\int_{\R^d}\mu(d\xi)\exp\Big\{-{\epsilon_{j_0}+\epsilon_{2n}\over 2}\vert\xi\vert^2\Big\}\int_{0\vee (t_2-s_{2n-1}-\delta)}^{t_2-s_{2n-1}}ds\\
	&\qquad\quad  \times\int_{\R^d}\exp\big\{i\xi\cdot(x_{2n}-x_{j_0})\big\}G(s, x_{2n}-x_{2n-1})dx_{2n}\,. 
\end{aligned}
$$
Using (\ref{M-11}), the right hand side is equal to
$$
\begin{aligned}
	&\int_{\R^d}\mu(d\xi)\exp\Big\{-{\epsilon_{j_0}+\epsilon_{2n}\over 2}\vert\xi\vert^2+i\xi\cdot(x_{2n-1}-x_{j_0})\Big\}\int_{0\vee (t_2-s_{2n-1}-\delta)}^{t_2-s_{2n-1}}ds\\
	&\qquad \qquad \times\int_{\R^d}\exp\big\{i\xi\cdot(x_{2n}-x_{2n-1})\big\}G(s, x_{2n}-x_{2n-1})dx_{2n}\\
	&\qquad =\int_{\R^d}\mu(d\xi)\exp\Big\{-{\epsilon_{j_0}+\epsilon_{2n}\over 2}\vert\xi\vert^2+i\xi\cdot(x_{2n-1}-x_{j_0})\Big\}\int_{0\vee (t_2-s_{2n-1}-\delta)}^{t_2-s_{2n-1}}
	{\sin(\vert\xi\vert s)\over\vert\xi\vert}ds\\
	&\qquad \le\int_{\R^d}\mu(d\xi)\bigg\vert\int_{0\vee (t_2-s_{2n-1}-\delta)}^{t_2-s_{2n-1}}
	{\sin(\vert\xi\vert s)\over\vert\xi\vert}ds\bigg\vert
	\,. 
\end{aligned}
$$
Notice that
$$
\begin{aligned}
	&\int_{0\vee (t_2-s_{2n-1}-\delta)}^{t_2-s_{2n-1}}
	{\sin(\vert\xi\vert s)\over\vert\xi\vert}ds
	={\cos\big(0\vee (t_2-s_{2n-1}-\delta)\vert \xi\vert -\cos(t_2-s_{2n-1})\vert\xi\vert\over\vert\xi\vert^2}\\
	&\qquad ={2\over\vert\xi\vert^2}\sin{\vert\xi\vert\big((t_2-s_{2n-1})-0\vee (t_2-s_{2n-1}-\delta)\big)\over 2}\\
	&\qquad\qquad \times \sin{\vert\xi\vert\big((t_2-s_{2n-1})+0\vee (t_2-s_{2n-1}-\delta)\big)\over 2}\,. 
\end{aligned}
$$
By the bounds $0\le (t_2-s_{2n-1})-0\vee (t_2-s_{2n-1}-\delta)\le \delta$ and $\vert \sin\theta\vert\le\vert\theta\vert$
$$
\int_{\R^d}\mu(d\xi)\bigg\vert\int_{0\vee (t_2-s_{2n-1}-\delta)}^{t_2-s_{2n-1}}
{\sin(\vert\xi\vert s)\over\vert\xi\vert}ds\bigg\vert\le 4t_2\delta\mu(\vert\xi\vert\le N)+2\int_{\{\vert\xi\vert\ge N\}}{1\over\vert\xi\vert^2}\mu(d\xi)
$$
for any $N>0$. 

In summary, there is a $\beta(\delta)>0$ independent of $(\epsilon, \epsilon')$ such that 
$$
\int_{\R^d}dx_{2n}\gamma_{\epsilon_{j_0}+\epsilon_{2n}}(x_{2n}-x_{j_0})\int_{t_2-\delta}^{t_2}G(s_{2n}-s_{2n-1}, x_{2n}-x_{2n-1})ds_{2n}\le\beta(\delta)
$$
and that $\beta(\delta)\to 0$ as $\delta\to 0^+$. Consequently,
$$
{\cal E}_{\epsilon,\epsilon'}\Big([0,t_1]_<^n\times[0, t_2]_<^{n-1}\times [t_2-\delta, t_2]\Big)\le \beta(\delta)\Lambda_{\epsilon,\epsilon'}(t_1, t_2)\,, 
$$
where
$$
\begin{aligned}
	&\Lambda_{\epsilon,\epsilon'}(t_1, t_2)=\int_{(\R^d)^{2n-1}}dx_1\cdots dx_{2n-1}\bigg(\prod_{(j,k)\in {\cal D}'}\gamma_{\epsilon_j+\epsilon_k}(x_j-x_k)\bigg)
	\int_{[0, t_1]_<^n\times[0, t_2]_<^{n-1}}ds_1\cdots ds_{2n-1}\\
	&\qquad \times \bigg(G(s_1,x_1)\prod_{l=2}^n G(s_l-s_{l-1}, x_l-x_{l-1}\bigg)
	\bigg(G(s_{n+1},x_{n+1})\prod_{k=n+2}^{2n-1} G(s_l-s_{l-1}, x_k-x_{k-1}\bigg)\,. 
\end{aligned}
$$
To establish (\ref{P-20}) and therefore to complete the proof, it suffices to show that
\begin{align}\label{P-22}
	\sup_{\epsilon,\epsilon'}\Lambda_{\epsilon,\epsilon'}(t_1, t_2)<\infty\,. 
\end{align}

Indeed, by a computation similar to the one used for (\ref{P-4})
$$
\begin{aligned}
	&\int_0^\infty\!\!\int_0^\infty e^{-t-\tilde{t}}\Lambda_{\epsilon,\epsilon'}(t, \tilde{t})dtd\tilde{t}\\
	&\quad \le\Big({1\over 2}\Big)^{2n+1}
	\int_0^\infty\int_0^\infty dtd\tilde{t}\exp\Big\{-{t+\tilde{t}\over 2}\Big\}\E_0\int_{[0, t]_<^n\times[0, \tilde{t}]_<^{n-1}}ds_1\cdots ds_{2n-1}\\
	&\qquad \times\prod_{(j,k)\in{\cal D}'}\gamma\big(B_{v(j)}(s_j)-B_{v(k)}(s_k)\big)
\end{aligned}
$$
for any $\epsilon,\epsilon'$. The above 
right hand side is finite by the fact (Lemma \ref{L-10}) that the moments of Brownian intersection local
times have polynomial increasing rates in time.

Finally, by non-negativity and monotonicity of $\Lambda_{\epsilon,\epsilon'}(t, \tilde{t})$ in $t$ and $\tilde{t}$, (\ref{P-20}) follows from the bound
$$
\sup_{\epsilon,\epsilon'}\Lambda_{\epsilon,\epsilon'}(t_1, t_2)\le\exp\{t_1+t_2\}
\sup_{\epsilon,\epsilon'}\int_0^\infty\!\!\int_0^\infty e^{-t-\tilde{t}}\Lambda_{\epsilon}(t, \tilde{t})dtd\tilde{t}<\infty\,. 
$$
This completes the proof. 
\qed

Keep in mind that the proof of Theorem \ref{th-4} depends on the Stratonovich integrability of $g_n(\cdot, t,x)$ and the ${\cal L}^2$-convergence of
the Laplace transform
$$
\int_0^\infty e^{-\lambda t}S_{n,\epsilon}\big(g_n(\cdot, t, x)\big)dt \hskip.1in\hbox{as $\epsilon\to 0$}
$$
that are installed in the following:

\begin{theorem}\label{prop}
	Under  Dalang's condition (\ref{intro-6}),
	
	\begin{enumerate}
		\item[(i)]  
		the ${\cal L}^2$-limit
		\begin{align}\label{P-23}
			\lim_{\epsilon_1,\cdots,\epsilon_n\to 0^+}
			\int_{(\R^d)^n}g_n(x_1,\cdots, x_n, t,x)\bigg(
			\prod_{k=1}^n\dot{W}_{\epsilon_k}(x_k)\bigg)dx_1\cdots dx_n
		\end{align}
		exists for any $n\ge 1$ and $(t,x)\in\R^+\times\R^d$.
		Consequently,  $g_n(\cdot, t,x)$
		is integrable in the sense of Definition \ref{def-2.2} and the limit in (\ref{P-23})
		is $S_n\big(g_n(\cdot, t,x)\big)$.
		\item[(ii)] for any $\lambda>0$,
		\begin{align}\label{P-24}
			\lim_{\epsilon\to 0}\int_0^\infty e^{-\lambda t}
			S_{n,\epsilon}\big(g_n(\cdot, t,x)\big)dt=\int_0^\infty e^{-\lambda t}
			S_{n}\big(g_n(\cdot, t,x)\big)dt\hskip.2in\hbox{in ${\cal L}^2(\Omega, {\cal F}, \P)$}\,. 
		\end{align}
	\end{enumerate}
\end{theorem}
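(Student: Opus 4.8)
The plan is to treat the two parts in turn, reducing everything to the already-established properties of the functions $F^{\cal D}_{\epsilon,\epsilon'}$. For part (i), by stationarity in $x$ it suffices to treat $x=0$. Viewing $S_{n,\epsilon}\big(g_n(\cdot,t,0)\big)$ as a net in ${\cal L}^2(\Omega,{\cal F},\P)$ indexed by the vector $\epsilon=(\epsilon_1,\dots,\epsilon_n)$, existence of the ${\cal L}^2$-limit in \eqref{P-23} is equivalent, by the same Cauchy criterion as in Lemma \ref{L-0} (which applies verbatim to nets indexed by vectors), to convergence of the cross second moment $\E\,S_{n,\epsilon}\big(g_n(\cdot,t,0)\big)\,S_{n,\epsilon'}\big(g_n(\cdot,t,0)\big)$ as $\epsilon,\epsilon'\to 0$. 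Expanding this product by the Gaussian moment formula \eqref{M-8}, exactly as in the proof of Corollary \ref{co-2}, one obtains $\sum_{{\cal D}\in\Pi_n}F^{\cal D}_{\epsilon,\epsilon'}(t,t)$. By Lemma \ref{int} each summand converges as $\epsilon,\epsilon'\to 0$, so the cross moment converges; hence the net is Cauchy, the limit exists, and by Definition \ref{def-2.2} it equals $S_n\big(g_n(\cdot,t,0)\big)$. In $d=3$, where $g_n$ is measure-valued, $F^{\cal D}_{\epsilon,\epsilon'}$ is read with the convention from \eqref{M-19}, and since Lemma \ref{int} covers all three dimensions the argument is unchanged.

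For part (ii), write $Z_\epsilon=\int_0^\infty e^{-\lambda t}S_{n,\epsilon}\big(g_n(\cdot,t,x)\big)\,dt$, a Bochner integral in ${\cal L}^2$. The decisive step is a dominating bound on $\E\,S_{n,\epsilon}\big(g_n(\cdot,t,x)\big)^2$ that is uniform in $\epsilon$ and integrable against $e^{-\lambda t}$. Since $G\ge 0$ for $d=1,2,3$, the integrand defining $F^{\cal D}_{\epsilon,\epsilon'}(t_1,t_2)$ in \eqref{P-2} is nonnegative, so $F^{\cal D}_{\epsilon,\epsilon'}$ is nondecreasing in each time variable. The monotonicity-versus-Laplace comparison used at the end of the proof of Lemma \ref{int} then gives, for every $\lambda_1,\lambda_2>0$, the pointwise estimate $F^{\cal D}_{\epsilon,\epsilon'}(t,t)\le \lambda_1\lambda_2\,e^{(\lambda_1+\lambda_2)t}\int_0^\infty\!\!\int_0^\infty e^{-\lambda_1 s_1-\lambda_2 s_2}F^{\cal D}_{\epsilon,\epsilon'}(s_1,s_2)\,ds_1 ds_2$, and the right-hand Laplace transform is bounded uniformly in $(\epsilon,\epsilon')$ by \eqref{P-4} (finite for all $\lambda_1,\lambda_2>0$ by Remark \ref{R-1}). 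Summing over ${\cal D}\in\Pi_n$, using stationarity in $x$ and choosing $\lambda_1=\lambda_2$ arbitrarily small, we conclude that for every $\delta>0$ there is $C_\delta$ with $\sup_{\epsilon}\E\,S_{n,\epsilon}\big(g_n(\cdot,t,x)\big)^2\le C_\delta\,e^{\delta t}$.

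With this bound the conclusion is immediate. Fix $\delta<2\lambda$; then $\phi(t):=C_\delta^{1/2}e^{\delta t/2}$ dominates $\big\|S_{n,\epsilon}\big(g_n(\cdot,t,x)\big)\big\|_{{\cal L}^2}$ uniformly in $\epsilon$ and satisfies $\int_0^\infty e^{-\lambda t}\phi(t)\,dt<\infty$. Part (i) supplies, for each fixed $t$, the ${\cal L}^2$-convergence $S_{n,\epsilon}\big(g_n(\cdot,t,x)\big)\to S_n\big(g_n(\cdot,t,x)\big)$. The vector-valued dominated convergence theorem then yields simultaneously that $Z:=\int_0^\infty e^{-\lambda t}S_n\big(g_n(\cdot,t,x)\big)\,dt$ is a well-defined ${\cal L}^2$-Bochner integral and that $Z_\epsilon\to Z$ in ${\cal L}^2$, which is precisely \eqref{P-24}.

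I expect the main obstacle to be the uniform subexponential second-moment bound in part (ii): it is exactly here that the positivity $G\ge 0$ (available only because $d\le 3$), the time-monotonicity it forces on $F^{\cal D}_{\epsilon,\epsilon'}$, and the $\epsilon$-uniform Laplace estimate \eqref{P-4} must be combined. The remaining points are routine: the Fubini interchanges of $\E$ with the time integrals are justified by the polynomial moment bounds of Lemma \ref{L-10} through Remark \ref{R-1}, and the $d=3$ measure-valued conventions are already subsumed in Lemma \ref{int}.
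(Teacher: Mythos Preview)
Your treatment of part (i) is essentially identical to the paper's: both reduce to the Cauchy criterion of Lemma \ref{L-0}, expand the cross moment via \eqref{M-8} as $\sum_{\cal D\in\Pi_n}F^{\cal D}_{\epsilon,\epsilon'}(t,t)$, and invoke Lemma \ref{int}.

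For part (ii) you take a genuinely different route. You extract from the monotonicity of $F^{\cal D}_{\epsilon,\epsilon'}$ and the uniform Laplace bound \eqref{P-4} a pointwise estimate $\sup_\epsilon\E\,S_{n,\epsilon}\big(g_n(\cdot,t,x)\big)^2\le C_\delta e^{\delta t}$, and then close with the vector-valued dominated convergence theorem, using part (i) for pointwise-in-$t$ convergence. The paper instead expands $\E\big|Z_\epsilon-Z\big|^2$ directly into three double Laplace integrals: the $\epsilon,\epsilon$ term is handled by the Laplace-transform convergence \eqref{P-17} (a byproduct of the proof of Lemma \ref{int}), and the cross term $\epsilon$-versus-limit is bounded below via Fatou's lemma using the nonnegativity of $\E\,S_{n,\epsilon}\big(g_n(\cdot,t_1,0)\big)S_n\big(g_n(\cdot,t_2,0)\big)$; the three pieces then combine to zero. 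Your argument is arguably more transparent once the uniform subexponential bound is in hand, and it makes explicit why the Bochner integral $\int_0^\infty e^{-\lambda t}S_n\big(g_n(\cdot,t,x)\big)\,dt$ is well defined. The paper's argument stays entirely at the Laplace-transform level and avoids the pointwise inversion step, at the cost of the slightly delicate Fatou manoeuvre on the cross term. Both are correct.
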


\proof By Lemma \ref{L-0}, all we need is to show
\begin{align}\label{P-25}
	\lim_{\epsilon,\epsilon'\to 0^+}\E S_{n,\epsilon}\big(g_{n,\epsilon}(\cdot, t,x)\big)S_{n,\epsilon'}\big(g_n(\cdot, t,x)\big)\,, 
\end{align}
exists, where $S_{n,\epsilon}\big(g_{n,\epsilon}(\cdot, t,x)\big)$ is defined in (\ref{P-1})
and where $\epsilon'=(\epsilon_{n+1},\cdots, \epsilon_{2n})$.
We have
$$
\begin{aligned}
	&\E S_{n,\epsilon}\big(g_{n,\epsilon}(\cdot, t,x)\big)S_{n,\epsilon'}\big(g_n(\cdot, t,x)\big)\\
	&=\int_{(\R^d)^{2n}}dx_1\cdots dx_{2n}g_n(x_1,\cdots, x_n, t,0)g_n(x_{n+1},\cdots, x_{2n}, t,0)\E\bigg(\prod_{k=1}^{2n}\dot{W}_{\epsilon_k}(x_k)\bigg)\\
	&=\sum_{{\cal D}\in \Pi_n}\int_{(\R^d)^{2n}}dx_1\cdots dx_{2n}\bigg(\prod_{(j,k)\in {\cal D}}
	\gamma_{\epsilon_j+\epsilon_k}(x_j-x_k)\bigg)g_n(x_1,\cdots, x_n, t,0)g_n(x_{n+1},\cdots, x_{2n}, t,0)\\
	&=\sum_{{\cal D}\in\Pi_n}F_{\epsilon,\epsilon'}^{\cal D}(t_1,t_2)\,, 
\end{aligned}
$$
where the second  step follows from (\ref{M-8}) with $g_k=\dot{W}_{\epsilon_k}(x_k)$ ($k=1,\cdots 2n$). Therefore, the existence of the limit in (\ref{P-25})
follows from Lemma \ref{int}.

We now come to Part (ii). Notice that
$$
\begin{aligned}
	&\E\bigg[\int_0^\infty e^{-\lambda t}S_{n,\epsilon}\big(g_n(\cdot, t,x)\big)dt
	-\int_0^\infty e^{-\lambda t}S_{n}\big(g_n(\cdot, t,x)\big)dt\bigg]^2\\
	&\qquad =\int_0^\infty\!\int_0^\infty e^{-\lambda(t_1+t_2)}
	\E S_{n,\epsilon}\big(g_n(\cdot, t_1,0)\big)S_{n,\epsilon}\big(g_n(\cdot, t_2,0)\big)dt_1dt_2\\
	&\qquad\qquad-2\int_0^\infty\!\int_0^\infty e^{-\lambda(t_1+t_2)}
	\E S_{n}\big(g_{n,\epsilon}(\cdot, t_1,0)\big)S_{n}\big(g_n(\cdot, t_2,0)\big)dt_1dt_2\\
	& \qquad\qquad +\int_0^\infty\!\int_0^\infty e^{-\lambda(t_1+t_2)}
	\E S_{n}\big(g_n(\cdot, t_1,0)\big)S_{n}\big(g_n(\cdot, t_2,0)\big)dt_1dt_2\,. 
\end{aligned}
$$
For the first term
$$
\begin{aligned}
	&\int_0^\infty\!\int_0^\infty dt_1dt_2e^{-\lambda(t_1+t_2)}
	\E S_{n,\epsilon}\big(g_n(\cdot, t_1,0)\big)S_{n,\epsilon}\big(g_n(\cdot, t_2, 0)\big)dt_1dt_2\\
	&\qquad =\sum_{{\cal D}\in\Pi_n}\int_0^\infty\!\int_0^\infty F_{\epsilon,\epsilon}^{\cal D}(t_1,t_2)dt_1dt_2\,. 
\end{aligned}
$$

The function $F^{\cal D}(t_1,t_2)$ appearing in (\ref{P-18}) is identified as
$$
\begin{aligned}
	F^{\cal D}(t_1, t_2)=&\int_{(\R^d)^{2n}}dx_1\cdots dx_{2n}\bigg(\prod_{(j,k)\in{\cal D}}\gamma(x_j-x_k)\bigg)\\
	&\qquad \times g_n(x_1,\cdots, x_n, t, 0)g_n(x_{n+1},\cdots, x_{2n}, t, 0)\,. 
\end{aligned}
$$

By (\ref{P-17}) with $\lambda_1=\lambda_2=\lambda$, therefore,
$$
\begin{aligned}
	&\lim_{\epsilon\to 0}\int_0^\infty\!\int_0^\infty dt_1dt_2e^{-\lambda(t_1+t_2)}
	\E S_{n,\epsilon}\big(g_n(\cdot, t_1,0)\big)S_{n,\epsilon}\big(g_n(\cdot, t_2,0)\big)dt_1dt_2\\
	&\quad=\sum_{{\cal D}\in\Pi_n}\int_0^\infty\!\int_0^\infty dt_1dt_2e^{-\lambda(t_1+t_2)}
	\int_{(\R^d)^{2n}}dx_1\cdots dx_{2n}\bigg(\prod_{(j,k)\in {\cal D}}
	\gamma(x_j-x_k)\bigg)\\
	&\qquad\times g_n(x_1,\cdots, x_n, t,0)g_n(x_{n+1},\cdots, x_{2n}, t,0)\\
	&\quad=\int_0^\infty\!\int_0^\infty dt_1dt_2e^{-\lambda(t_1+t_2)}
	\E S_{n}\big(g_n(\cdot, t_2,0)\big)S_{n}\big(g_n(\cdot, t_2,0)\big)dt_1dt_2\,, 
\end{aligned}
$$
where the last step follows from Stratonovich integrability stated in Part (i)
and the identity in (\ref{M-18}).

Using Part (i),
$$
\E S_{n,\epsilon}\big(g_{n}(\cdot, t_1,0)\big)S_{n}\big(g_{n}(\cdot, t_2, 0\big)
=\lim_{\epsilon'\to 0}\E S_{n,\epsilon}\big(g_{n}(\cdot, t_1,0)\big)S_{n,\epsilon'}\big(g_n(\cdot, t_2,0)\big)\,. 
$$
By the fact that $\E S_{n,\epsilon}\big(g_n(\cdot, t_1,0)\big)S_{n}\big(g_n(\cdot, t_2,0)\big)\ge 0$ and
by Fatou's lemma, 
$$
\begin{aligned}
	&\liminf_{\epsilon\to 0}\int_0^\infty\!\int_0^\infty dt_1dt_2e^{-\lambda(t_1+t_2)}\E S_{n,\epsilon}\big(g_n(\cdot, t_1,0)\big)S_{n}\big(g_n(\cdot, t_2,0)\big)\\
	&\qquad\ge\int_0^\infty\!\int_0^\infty dt_1dt_2e^{-\lambda(t_1+t_2)}\liminf_{\epsilon\to 0}\E S_{n,\epsilon}\big(g_n(\cdot, t_1,0)\big)S_{n}\big(g_n(\cdot, t_2,0)\big)\\
	&\qquad=\int_0^\infty\!\int_0^\infty dt_1dt_2e^{-\lambda(t_1+t_2)}\lim_{\epsilon,\epsilon'\to 0}\E S_{n,\epsilon}\big(g_n(\cdot, t_1,0)\big)S_{n,\epsilon'}\big(g_n(\cdot, t_2,0)\big)\\
	&\qquad=\int_0^\infty\!\int_0^\infty dt_1dt_2e^{-\lambda(t_1+t_2)}\E S_{n}\big(g_n(\cdot, t_1,0)\big)S_{n}\big(g_n(\cdot, t_2,0)\big)\,, 
\end{aligned}
$$
where we have used Part (i) in the last two steps.

Summarizing our argument,
$$
\lim_{\epsilon\to 0}\E\bigg[\int_0^\infty e^{-\lambda t}S_{n,\epsilon}\big(g_n(\cdot, t,0)\big)dt
-\int_0^\infty e^{-\lambda t}S_{n}\big(g_n(\cdot, t,0)\big)dt\bigg]^2=0\,. 
$$
This competes the proof. \qed

We now establish Fubini's theorem for the multiple Stratonovich integral with the integrand $g_n(\cdot, t,x)$.

\begin{lemma}\label{L}   Under   Dalang's condition (\ref{intro-6}),  we have 
	\begin{align}\label{P-26}
		\lim_{\varepsilon_2,\cdots,\varepsilon_n\to 0^+}S_{n,\varepsilon}\big(g_n(\cdot, t,x)\big)=\int_{\R^d}\bigg(\int_0^t G(t-s, y-x)
		S_{n-1}\big(g_{n-1}(\cdot, s, y))ds\bigg)\dot{W}_{\varepsilon_1}(y)dy 
	\end{align}
	and  \begin{align}\label{P-27}
		\lim_{\varepsilon_1\to 0^+}\int_{\R^d}\bigg(\int_0^t G(t-s, y-x)
		S_{n-1}\big(g_{n-1}(\cdot, s, y))ds\bigg)\dot{W}_{\varepsilon_1}(y)dy 
		=S_n\big(g_n(\cdot, t,x)\big)\,, 
	\end{align}
	where the limits are taken in ${\cal L}^2(\Omega, {\cal F}, \P)$.
\end{lemma}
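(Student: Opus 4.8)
The plan is to prove (\ref{P-26}) and (\ref{P-27}) by peeling a single integration variable off the kernel $g_n$ and exploiting its self-similar structure. The starting point is the elementary decomposition of (\ref{M-14}) obtained by isolating the first time increment: writing $x_0=x$, $s_0=0$ and shifting the remaining times by $s_1$, one gets
\begin{align*}
g_n(x_1,\cdots,x_n,t,x)=\int_0^t G(s_1,x-x_1)\,g_{n-1}(x_2,\cdots,x_n,t-s_1,x_1)\,ds_1\,,
\end{align*}
where we used that $G(s,\cdot)$ is even. First I would substitute this into the definition (\ref{P-1}) of $S_{n,\varepsilon}\big(g_n(\cdot,t,x)\big)$ and apply ordinary Fubini to integrate out $x_2,\cdots,x_n$ before $x_1$. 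Since every $\dot{W}_{\varepsilon_k}$ is a smooth Gaussian field and all the resulting integrals converge absolutely in ${\cal L}^2(\Omega,{\cal F},\P)$ for $\varepsilon_1,\cdots,\varepsilon_n>0$, this yields, after the change of variables $s=t-s_1$, the exact identity
\begin{align*}
S_{n,\varepsilon}\big(g_n(\cdot,t,x)\big)=\int_{\R^d}\bigg(\int_0^t G(t-s,y-x)\,S_{n-1,\varepsilon''}\big(g_{n-1}(\cdot,s,y)\big)\,ds\bigg)\dot{W}_{\varepsilon_1}(y)\,dy\,,
\end{align*}
valid for all smoothing parameters, where $\varepsilon''=(\varepsilon_2,\cdots,\varepsilon_n)$. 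This identity is the backbone of the whole argument.

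To obtain (\ref{P-26}) I would then let $\varepsilon_2,\cdots,\varepsilon_n\to 0^+$ with $\varepsilon_1$ held fixed. For each fixed $(s,y)$ the inner integral $S_{n-1,\varepsilon''}\big(g_{n-1}(\cdot,s,y)\big)$ converges in ${\cal L}^2(\Omega,{\cal F},\P)$ to $S_{n-1}\big(g_{n-1}(\cdot,s,y)\big)$ by Part (i) of Theorem \ref{prop} in its $(n-1)$-variate case. The task is to transport this convergence through the deterministic time integral against $G$ and through the outer stochastic integral against $\dot{W}_{\varepsilon_1}$. Since $\varepsilon_1$ stays fixed, I would bound the ${\cal L}^2(\Omega)$-norm of the relevant difference by expanding its second moment with the Gaussian moment formula (\ref{M-8}) (equivalently the pairing identity (\ref{M-18})); this reduces everything to sums over pair partitions ${\cal D}\in\Pi_n$ of smoothed intersection-type integrals of $\gamma$ against products of $G$'s, in which one smoothing leg remains at $\varepsilon_1$. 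Uniform-in-$\varepsilon''$ finiteness of these partition sums, obtained exactly as in the proof of Lemma \ref{int}, together with their convergence as $\varepsilon''\to 0$, then lets me pass the limit and identify the right-hand side of (\ref{P-26}).

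For (\ref{P-27}) I would let $\varepsilon_1\to 0^+$ in the right-hand side of (\ref{P-26}). By construction this is precisely the smoothed family defining the Stratonovich integral of the random field $y\mapsto\int_0^t G(t-s,y-x)S_{n-1}\big(g_{n-1}(\cdot,s,y)\big)\,ds$, so it suffices to identify its ${\cal L}^2$-limit with $S_n\big(g_n(\cdot,t,x)\big)$. Here I would combine the exact identity above with the joint-limit statement of Theorem \ref{prop}(i), namely that $S_{n,\varepsilon}\big(g_n(\cdot,t,x)\big)\to S_n\big(g_n(\cdot,t,x)\big)$ in ${\cal L}^2$ as \emph{all} of $\varepsilon_1,\cdots,\varepsilon_n\to 0$. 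A triangle-inequality argument, controlled by the uniform second-moment bounds already in hand, then promotes the iterated limit (first $\varepsilon''\to 0$, producing the right-hand side of (\ref{P-26}), then $\varepsilon_1\to 0$) to the joint limit, giving $S_n\big(g_n(\cdot,t,x)\big)$ as claimed.

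The hard part will be the interchange in the second paragraph: because the inner multiple integral $S_{n-1,\varepsilon''}$ and the outer smoothing $\dot{W}_{\varepsilon_1}$ are built from the \emph{same} Gaussian noise $W$, one cannot treat the outer integration as acting on an independent deterministic field, and any bound that factorizes the two fails. The correct route is to write out the full $2n$-point Gaussian moment and establish uniform integrability of the associated partition sums in $\varepsilon''$; the finiteness and convergence guaranteed by the technique of Lemma \ref{int} together with the polynomial-in-time moment growth for Brownian intersection local times in Lemma \ref{L-10} are exactly what make this control available, and the same estimates underwrite the iterated-to-joint limit passage needed for (\ref{P-27}).
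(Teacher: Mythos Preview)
Your backbone identity---peeling off one spatial variable and writing $S_{n,\varepsilon}\big(g_n(\cdot,t,x)\big)$ as an outer integral of $S_{n-1,\varepsilon''}$ against $\dot{W}_{\varepsilon_1}(y)\,G(t-s,y-x)\,dy\,ds$---coincides with the paper's starting point, and your handling of (\ref{P-27}) via the joint limit in Theorem~\ref{prop}(i) also matches. The departure is in how you pass the limit for (\ref{P-26}), and there your diagnosis of the difficulty is mistaken.

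You assert that because $S_{n-1,\varepsilon''}$ and $\dot{W}_{\varepsilon_1}$ share the same underlying noise, ``any bound that factorizes the two fails,'' and you therefore route through a full $2n$-point Gaussian moment expansion governed by Lemma~\ref{int}. In fact a factorization is precisely what the paper uses, and it works because Cauchy--Schwarz needs no independence. Writing $A(s,y)=S_{n-1,\tilde\varepsilon}\big(g_{n-1}(\cdot,s,y)\big)-S_{n-1}\big(g_{n-1}(\cdot,s,y)\big)$ and $B(y)=\dot{W}_{\varepsilon_1}(y)$, the paper applies Cauchy--Schwarz on $L^2\big(\Omega\times\R^d\times[0,t],\ \P\otimes G(t-s,y-x)\,dy\,ds\big)$ to get
\[
\E\Big|\int_0^t\!\!\int_{\R^d} A(s,y)\,B(y)\,G(t-s,y-x)\,dy\,ds\Big|
\le\Big(\int\!\!\int\E A^2\,G\,dy\,ds\Big)^{1/2}\Big(\int\!\!\int\E B^2\,G\,dy\,ds\Big)^{1/2}.
\]
Stationarity in $y$ makes $\E A(s,y)^2$ and $\E B(y)^2$ independent of $y$, and the finite total mass $\int_{\R^d}G(t-s,y-x)\,dy=t-s$ closes both factors; the first one then tends to zero by Theorem~\ref{prop}(i) at level $n-1$ together with dominated convergence in $s$. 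This establishes (\ref{P-26}) in ${\cal L}^1$, which is enough once paired with the ${\cal L}^2$ convergence of the full family $S_{n,\varepsilon}$ from Theorem~\ref{prop}(i). Your pair-partition route would also reach the goal, but it redoes work that a one-line Cauchy--Schwarz sidesteps; what you flagged as the hard part is actually the easy part.
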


\begin{remark}\label{re} The identity \eqref{P-27} mathematically confirms the relation
	\eqref{e.2.9}.
\end{remark}  

\begin{proof}   Part (i) in Theorem \ref{prop} shows that
	$$
	\lim_{\epsilon\to 0^+}S_{n,\epsilon}\big(g_n(\cdot, t,x)\big)=S_n\big(g_n(\cdot, t,x)\big)\hskip.1in\hbox{in ${\cal L}^2(\Omega, {\cal F}, \P)$}\,. 
	$$
	Therefore, all we need  for establishing (\ref{P-26}) is to prove it with
	the convergence in ${\cal L}^1(\Omega, {\cal F},\P)$ instead.
	By   the Fubini theorem
	$$
	S_{n,\varepsilon}\big(g_n(\cdot, t,x)\big)%_{n,\varepsilon}(t,x)
	=\int_{\R^d}\bigg(\int_0^t G(t-s, y-x)S_{n-1,\tilde{\varepsilon}}\big(g_{n-1}(\cdot, s, y)\big)ds\bigg)
	\dot{W}_{\varepsilon_1} (y)dy\,, 
	$$
	where
	$$
	\begin{aligned}S_{n-1,\tilde{\varepsilon}}\big(g_{n-1}(\cdot, t,x)\big)&=
		\int_{(\R^d)^{n-1}}\int_{[0,t]_<^{n-1}}ds_1\cdots ds_{n-1}\\
		&\times\bigg(\prod_{k=1}^{n-1}G(s_k-s_{k-1}, x_k-x_{k-1})\bigg)
		\prod_{k=1}^{n-1}\dot{W}_{\varepsilon_{k+1}}(x_k)dx_k
	\end{aligned}
	$$
	with notation $\tilde{\varepsilon}=(\varepsilon_2,\cdots,\varepsilon_n)$.
	So we have
	$$
	\begin{aligned}
		&S_{n,\varepsilon}\big(g_n(\cdot,t,x)\big)-\int_{\R^d}\bigg(\int_0^t G(t-s, y-x)
		S_{n-1}\big(g_{n-1}(\cdot, s, y))ds\bigg)\dot{W}_{\varepsilon_1}(y)dy\\
		&=\int_0^tds\int_{\R^d}
		\Big[S_{n-1,\tilde{\varepsilon}}\big(g_{n-1}(\cdot,s, y)\big)- S_{n-1}\big(g_{n-1}(\cdot, s, y))\Big]
		W_{\varepsilon_1}(y)G(t-s, y-x)dy\,. 
	\end{aligned}
	$$
	By the Cauchy-Schwartz inequality
	$$
	\begin{aligned}
		&\E\bigg\vert S_{n,\varepsilon}\big(g_n(\cdot,t,x)\big)-\int_{\R^d}\bigg(\int_0^t G(t-s, y-x)
		S_{n-1}\big(g_{n-1}(\cdot, s, y)\big)ds\bigg)\dot{W}_{\varepsilon_1}(y)dy\bigg\vert\\
		&\le\bigg\{\E\int_0^tds\int_{\R^d}
		\Big[S_{n-1,\tilde{\varepsilon}}\big(g_{n-1}(\cdot,s, y)\big)- S_{n-1}\big(g_{n-1}(\cdot, s, y)\big) \Big]^2
		G(t-s, y-x)dy\bigg\}^{1/2}\\
		&\times\bigg\{\E\int_0^tds\int_{\R^d}
		\vert\dot{W}_{\varepsilon_1}(y)\vert^2G(t-s, y-x)dy\bigg\}^{1/2}\\
		&=\bigg\{\E\int_0^tds\E\Big[S_{n-1,\tilde{\varepsilon}}\big(g_{n-1}(\cdot, s, 0)\big)- S_{n-1}\big(g_{n-1}(\cdot, s, 0)\big)\Big]^2
		\int_{\R^d}G(t-s, y-x)dy\bigg\}^{1/2}\\
		&\times \Big\{\E\vert\dot{W}_{\varepsilon_1}(0)\vert^2\Big\}^{1/2}\bigg\{
		\int_0^tds\int_{\R^d} G(t-s, y-x)dy\bigg\}^{1/2}\,, 
	\end{aligned}
	$$
	where the last step follows from the facts that
	$\E\vert\dot{W}_{\varepsilon_1}(y)\vert^2=\E\vert\dot{W}_{\varepsilon_1}(0)\vert^2$ and
	$$
	\E\Big[S_{n-1,\tilde{\varepsilon}}\big(g_{n-1}(\cdot, s, y)\big)- S_{n-1}\big(g_{n-1}(\cdot, s, y)\big)\Big]^2
	=\E\Big[S_{n-1,\tilde{\varepsilon}}\big(g_{n-1}(\cdot, s, 0)\big)- S_{n-1}\big(g_{n-1}(\cdot, s, 0)\big)\Big]^2\,. 
	$$
	Further,
	$$
	\int_{\R^d} G(t-s, y-x)dy=t-s
	$$
	We have the bound
	$$
	\begin{aligned}
		&\E\bigg\vert S_{n,\varepsilon}\big(g_n(\cdot,t,x)\big)-\int_{\R^d}\bigg(\int_0^t G(t-s, y-x)
		S_{n-1}\big(g_{n-1}(\cdot, s, y)\big)ds\bigg)\dot{W}_{\varepsilon_1}(y)dy\bigg\vert\\
		&\le \frac{1}{ 2}t^{3/2}\Big\{\E\vert\dot{W}_{\varepsilon_1}(0)\vert^2\Big\}^{1/2}
		\bigg\{\int_0^t
		\E\Big[S_{n-1,\tilde{\varepsilon}}\big(g_{n-1}(\cdot, s, 0)\big)- S_{n-1}\big(g_{n-1}(\cdot, s, 0)\big)\Big]^2ds\bigg\}^{1/2}\,. 
	\end{aligned}
	$$
	By Part (i) of Theorem \ref{prop} (with $n$ being replaced by $n-1$),
	$$
	\lim_{\varepsilon_2,\cdots,\varepsilon_n\to 0^+}
	\E\Big[S_{n-1,\tilde{\varepsilon}}\big(g_{n-1}(\cdot, s, 0)\big)- S_{n-1}\big(g_{n-1}(\cdot, s, 0)\Big]^2
	=0\,, \hskip.2in 0\le s\le t\,. 
	$$
	In addition
	$$
	\begin{aligned}
		&\E\Big[S_{n-1,\tilde{\varepsilon}}\big(g_{n-1}(\cdot, s, 0)\big)- S_{n-1}\big(g_{n-1}(\cdot, s, 0)\big)\Big]^2\\
		&\le \E\big[S_{n-1,\tilde{\varepsilon}}\big(g_{n-1}(\cdot, s, 0)\big)\big]^2+\E\big[S_{n-1}\big(g_{n-1}(\cdot, s, 0)\big)\big]^2\\
		&\le \E\big[S_{n-1,\tilde{\varepsilon}}\big(g_{n-1}(\cdot, t, 0)\big)\big]^2+\E\big[S_{n-1}\big(g_{n-1}(\cdot, t, 0)\big)\big]^2\,. 
	\end{aligned}
	$$
	By dominated convergence we see 
	$$
	\lim_{\varepsilon_2,\cdots,\varepsilon_n\to 0^+}\int_0^t
	\E\Big[S_{n-1,\tilde{\varepsilon}}\big(g_{n-1}(\cdot, s, 0)\big)- S_{n-1}\big(g_{n-1}(\cdot, s, 0)\big)\Big]^2ds  
	=0\,. 
	$$
	This proves the (\ref{P-26}). Finally, (\ref{P-27}) follows from (\ref{P-26}) and
	Theorem \ref{prop}.
\end{proof}

\subsection{Link to Dalang-Mueller-Tribe's work}\label{S-3}

The discussion in this sub-section does not contribute to the proof of the main theorems in this paper. Rather, it
helps the interested reader to better understand the true nature of Stratonovich solution and
provide a new representation to the Laplace transform of the deterministic system (\ref{intro-7})
for possible future investigation.

Let $N(t)$ ($t\ge 0$) be  a Poisson process with parameter 1 and $\{\tau_k\}_{k\ge 1}$ be the jumping times of $N(t)$ with
definition $\tau_0=0$.  The stochastic process $X_t$ ($t\ge 0$) is defined as follows: First, the random sequence
$\{X_{\tau_k}\}_{k\ge 1}$ is a random sequence whose finite-dimensional distribution of $(X_{\tau_1},\cdots, X_{\tau_k})$
has the conditional distribution (conditioning on $\{\tau_1,\cdots,\tau_n\}$)
$$ 
\bigg(\prod_{k=1}^n(\tau_k-\tau_{k-1})^{-1}G(\tau_k-\tau_{k-1}, x_k-x_{k-1})\bigg)dx_1\cdots dx_n\,. 
$$
Set $X_{\tau_0}=X_0=x$. The process $X_t$ is defined as the linear interpolation of $\{X_{\tau_k}\}_{k\ge 0}$. 

Dalang, Mueller and Tribe (Theorem 3.2, \cite{DMT}) prove that the function
\begin{align}\label{P-28}
	u(t,x)=e^t
	\E_x\Bigg[u_0(t-\tau_{N(t)}, X_{\tau_{N(t)}})\prod_{k=1}^{N(t)}(\tau_k-\tau_{k-1})f(X_{\tau_k})\Bigg]
\end{align}
solves the wave equation (\ref{intro-7}), where $u_0(t,x)$ appears in (\ref{M-1}). For the purpose of comparison,
we consider the case when $u_0(t,x)=1$ and write
$$
\begin{aligned}
	u(t,x)&=\sum_{n=0}^\infty e^t \P\{N(t)=n\}\E_x\Bigg[\prod_{k=1}^{n}(\tau_k-\tau_{k-1})f(X_{\tau_k})\bigg\vert N(t)=n\Bigg]\\
	&=\sum_{n=0}^\infty {t^n\over n!}\E_x\Bigg[\prod_{k=1}^{n}(\tau_k-\tau_{k-1})f(X_{\tau_k})\bigg\vert N(t)=n\Bigg]\,. 
\end{aligned}
$$
By the classic fact that conditioning on $\{N(t)=n\}$, the $n$-dimensional vector $(\tau_1,\cdots, \tau_n)$ is uniformly
distribution on $[0,t]_<^n$,
\begin{align}\label{P-29}
	u(t,x)&=\sum_{n=0}^\infty\int_{[0,t]_<^n}ds_1\cdots ds_n\int_{(\R^d)^n}dx_1\cdots dx_n\bigg(\prod_{k=1}^nG(s_k-s_{k-1}, x_k-x_{k-1})\bigg)\prod_{k=1}^nf(x_k)\nonumber\\
	&=\sum_{n=0}^\infty\int_{(\R^d)^n}dx_1\cdots dx_n g_n(x_1,\cdots, x_n, t, x)\prod_{k=1}^nf(x_k)
\end{align}
with the convention $s_0=0$ and $x_0=x$. Comparing  this with (\ref{M-2}) and (\ref{M-4}) we see the deterministic root of 
stochastic model (\ref{intro-1}) in the Stratonovich setting.

Similar to (\ref{P-10}), the same computation leads to
\begin{align}\label{P-30}
	&\int_0^\infty e^{-\lambda t}\int_{(\R^d)^n}dx_1\cdots dx_n g_n(x_1,\cdots, x_n, t, x)\prod_{k=1}^nf(x_k)\\
	&\qquad =\frac{1}{ n!}\frac{\lambda}{ 2}\Big(\frac{1}{ 2}\Big)^n
	\int_0^\infty\exp\Big\{-\frac{\lambda^2}{ 2}t\Big\}
	\E_x \bigg[\int_0^t f\big(B(s)\big)ds\bigg]^ndt\,. \nonumber
\end{align}
Summing both sides over $n$, we obtain the following representation
\begin{align}\label{P-31}
	\int_0^\infty e^{-\lambda t}u(t,x)dt={\lambda\over 2}
	\int_0^\infty\exp\Big\{-\frac{\lambda^2}{ 2}t\Big\}
	\E_x\exp\bigg\{\int_0^tf\big(B(s)\big)ds\bigg\}dt
\end{align}
in the sense that   finite  of one side leads to   finite  of the other
side, and to the equality.

The classic semi-group theory (see, e.g., Section 4.1, \cite{Chen-1}) claims
an asymptotically linear growth of the logarithmic exponential moment
$$
\log\E_x\exp\bigg\{\int_0^tf\big(B(s)\big)ds\bigg\}\hskip.2in (t\to\infty)
$$
for a class of functions $f$. In this case, the right hand side
of (\ref{P-31}) is finite for large $\lambda$.

On the other hand, the representation (\ref{P-31}) unlikely makes sense
for the stochastic wave equation (\ref{intro-1}). Under the assumption in
Theorem \ref{th-2}, we have (\cite{Chen-2})
$$
\log\E_x\exp\bigg\{\int_0^t\dot{W}\big(B(s)\big)ds\bigg\}
\sim C(\gamma)t(\log t)^{2\over 4-\alpha}\hskip.2in a.s.
$$
for some constant $C(\gamma)>0$ as $t\to\infty$. So (\ref{P-31}) almost surely blows up
for any $\lambda>0$.

\section{Proof of Theorem \ref{th-1}}\label{s.4} 

\begin{proof}[Proof  of Theorem \ref{th-1}]

	To show that the Stratonovich expansion (\ref{M-8}) converges in ${\cal L}^2(\Omega, {\cal F}, \P)$, by the triangle inequality
	and by the fact that $u(t,x)$ (if defined) is stationary in $x$, all we need is
	\begin{align}\label{E-1}
		\sum_n\Big\{\E\big[S_n\big(g_n(\cdot, t, 0)\big)\big]^2\Big\}^{1/2}<\infty\,,  \hskip.2in
		\forall t>0\,. 
	\end{align}
	
	The procedure starts at Corollary \ref{co-2}. By  the  Cauchy-Schwartz inequality
	$$
	\begin{aligned}
		&\E\otimes\E_0\bigg[\int_0^{t_1}\dot{W}\big(B_1(s)ds\bigg]^n\bigg[\int_0^{t_2}\dot{W}\big(B_2(s)\big)ds\bigg]^n\\
		&\quad \le\bigg\{\E\otimes\E_0\bigg[\int_0^{t_1}\dot{W}\big(B(s)ds\bigg]^{2n}\bigg\}^{1/2}\bigg\{\E\otimes\E_0\bigg[\int_0^{t_2}\dot{W}\big(B(s)ds\bigg]^{2n}\bigg\}^{1/2}\\
		&\quad =\bigg\{\E_0\bigg[\int_0^{t_1}\!\!\int_0^{t_1}\gamma\big(B(s)-B(r)\big)dsdr\bigg]^n\bigg\}^{1/2}\bigg\{\E_0\bigg[\int_0^{t_2}\!\!\int_0^{t_2}
		\gamma\big(B(s)-B(r)\big)dsdr\bigg]^n\bigg\}^{1/2}\,. 
	\end{aligned}
	$$

	Let $t>0$ be fixed. Taking  $\lambda_1=\lambda_2=nt^{-1}$
	in (\ref{P-14}), Corollary \ref{co-2}:
	$$
	\begin{aligned}
		&\int_0^\infty\!\int_0^\infty dt_1dt_2\exp\Big\{-\frac{n}{ t}(t_1+t_2)\Big\}
		\E \left[ S_l\big(g_n(\cdot, t_1,0)\big)S_{2n-l}\big(g_n(\cdot, t_2,0)\big)\right] \\
		&\quad \le\frac{(2n)!}{(n!)^2}\Big(\frac{n}{ 2t}\Big)^2\Big(\frac{1}{ 2}\Big)^{3n}
		\int_0^\infty\!\int_0^\infty dt_1dt_2\exp\Big\{-\frac{n^2}{ 2t^2}(t_1+t_2)\Big\}\\
		&\qquad \times\bigg\{\E_0\bigg[\int_0^{t_1}\!\!\int_0^{t_1}\gamma\big(B(s)-B(r)\big)dsdr\bigg]^n\bigg\}^{1/2}\bigg\{\E_0\bigg[\int_0^{t_2}\!\!\int_0^{t_2}\gamma\big(B(s)-B(r)\big)dsdr\bigg]^n\bigg\}\\
		&\quad =\frac{(2n)!}{(n!)^2}\Big(\frac{n}{ 2t}\Big)^2\Big(\frac{1}{ 2}\Big)^{3n}\bigg\{\int_0^\infty d\tilde{t}\exp\Big\{-\frac{n^2}{ 2t^2}\tilde{t}\Big\}
		\bigg(\E_0\bigg[\int_0^{\tilde{t}}\!\!\int_0^{\tilde{t}}\gamma\big(B(s)-B(r)\big)dsdr\bigg]^n\bigg)^{1/2}\bigg\}^2\,. 
	\end{aligned} 
	$$
	
	Recall ((1.5), Theorem 1.1, \cite{Chen-3}) that under   Dalang's condition (\ref{intro-6}), the limit
	$$
	\lim_{\tilde{t}\to\infty}{1\over \tilde{t}}\log\E_0\exp\bigg\{{1\over \tilde{t}}\int_0^{\tilde{t}}\!\!\int_0^{\tilde{t}}\gamma(B(s)-B(r))dsdr\bigg\}
	$$
	exists and is finite.  This means there is a constant $C$ such that
	\[
	\E_0\exp\bigg\{\frac{1}{\tilde t} \int_0^{\tilde{t}}
	\!\!\int_0^{\tilde{t}}\gamma(B(s)-B(r))dsdr\bigg\}\le \exp\{C\tilde t\}\,. 
	\]
	By the relation
	$$
	{1\over n! \tilde{t}^n}\E_0\bigg[  \int_0^{\tilde{t}}\!\!\int_0^{\tilde{t}}
	\gamma(B(s)-B(r))dsdr\bigg]^n\le \E_0\exp\bigg\{\frac{1}{\tilde t} \int_0^{\tilde{t}}
	\!\!\int_0^{\tilde{t}}\gamma(B(s)-B(r))dsdr\bigg\}
	$$
	for any $\tilde{t}>0$, we have the bound that is uniform in $\tilde{t}$ and $n$:
	$$
	\E_0\bigg[\int_0^{\tilde{t}}\!\!\int_0^{\tilde{t}}\gamma\big(B(s)-B(r)\big)dsdr\bigg]^n
	\le n!\tilde{t}^n \exp\{C\tilde{t}\}\,. 
	$$
	Hence,  
	$$
	\begin{aligned}
		&\int_0^\infty \exp\Big\{-\frac{n^2}{ 2t^2}\tilde{t}\Big\}
		\bigg(\E_0\bigg[\int_0^{\tilde{t}}\!\!\int_0^{\tilde{t}}\gamma\big(B(s)-B(r)\big)dsdr\bigg]^n\bigg)^{1/2}d\tilde{t}\\
		&\qquad  \le (n!)^{1/2}\int_0^\infty \exp\Big\{-\frac{n^2}{ 4t^2}\tilde{t}\Big\}\tilde{t}^{n/2}d\tilde{t}=(n!)^{1/2}\Big(\frac{4t^2}{ n^2}\Big)^{n+1}\Gamma\Big({n\over 2}+1\Big)\,. 
	\end{aligned}
	$$
	Thus,  by the Stirling formula we get the bound
	\begin{align}\label{E-2}
		&\int_0^\infty\!\int_0^\infty dt_1dt_2\exp\Big\{-\frac{n}{ t}(t_1+t_2)\Big\}
		\E \left[ S_n\big(g_n(\cdot, t_1,0)\big)S_{n}\big(g_{n}(\cdot, t_2,0)\big)\right] 
		\le \frac{C^n}{n!}t^{2n+4}\,. 
	\end{align}

	By the fact that
	the moment
	$$
	\E \left[ S_n\big(g_n(\cdot, t_1,0)\big)S_{n}\big(g_{n}(\cdot, t_2,0)\big)\right] 
	$$
	is non-negative and non-decreasing in $t_1$ and $t_2$ we have  
	$$
	\begin{aligned}
		&\int_0^\infty\!\int_0^\infty dt_1dt_2\exp\Big\{-\frac{n}{ t}(t_1+t_2)\Big\}
		\E \left[ S_n\big(g_n(\cdot, t_1,0)\big)S_{n}\big(g_{n}(\cdot, t_2,0)\big)\right] dt_1 dt_2\\
		&\qquad \ge\E \left[ S_n\big(g_n(\cdot, t,0)\big)\right]^2
		\int_t^\infty\!\int_t^\infty dt_1dt_2\exp\Big\{-\frac{n}{ t}(t_1+t_2)\Big\}dt_1dt_2\\
		&\qquad =\frac{t^2}{ n^2}e^{-2n}\E \left[ S_n\big(g_n(\cdot, t,0)\big)\right]^2\,. 
	\end{aligned}
	$$
	Comparing this with \eqref{E-2}  we get the bound 
	\begin{align}\label{E-3}
		\E \left[ S_n\big(g_n(\cdot, t,0)\big)\right]^2 
		\le  \frac{C_2^n}{ n!}t^{2n+2}\,, \hskip.2in n=1,2,\cdots
	\end{align}
	This leads to (\ref{E-1}) and therefore
	to the ${\cal L}^2$-convergence of the Stratonovich expansion in (\ref{M-3}).

	In view of (\ref{P-15}), the bound (\ref{E-3}) remains true for
	$\E \left[ S_{n,\epsilon}\big(g_n(\cdot, t,0)\big)\right]^2$
	for any $\epsilon=(\epsilon_1,\cdots,\epsilon_n)$, i.e.,
	$$
	\E \left[ S_{n,\epsilon}\big(g_n(\cdot,t,0)\big)\right]^2 
	\le  \frac{C_2^n}{ n!} t^{2n+2}\,, \hskip.2in n=1,2,\cdots
	$$
	for any $t>0$. Let $\epsilon_2,\cdots,\epsilon_n\to 0^+$ on the left hand side. By (\ref{P-26}), Lemma \ref{L},
	\begin{equation}  
		\begin{split}
			&\E\left|  \int_{\R^d}\bigg(\int_0^t G(t-s, y-x)
			S_{n-1}\big(g_{n-1}(\cdot, s, y))ds\bigg)\dot{W}_{\varepsilon_1}(y)dy \right|^2  \le \frac{C_2^n}{ n!}t^{2n+2}
		\end{split}\label{E-4} 
	\end{equation}
	for any $n=1,2\cdots$, any $t> 0$ and any  $\epsilon_1>0$.

	To show that $\{u(t,x)\}$  is a solution in the sense of Definition 
	\ref{d.mild_solution}  and therefore to complete the proof of Part (i) of Theorem \ref{th-1}, we only need to show 
	\begin{enumerate}
		\item[(1)] For any $t>0$ and $x\in\R^d$, the random field $V(y)\equiv \int_0^t G_{t-s}(x-y)u(s,y) ds$ is Stratonovich integrable, or
		$$
		\lim_{\epsilon_1\to 0^+}\int_{\R^d} \bigg(\int_0^t G_{t-s}(x-y)u(s,y) ds \bigg)W_{\varepsilon_1} (y) dy=\int_{\R^d} \bigg(\int_0^t G_{t-s}(x-y)u(s,y) ds\bigg)W (dy)
		$$
		in ${\cal L}^2(\Omega, {\cal F},\P)$.
		\item[(2)] Equation \eqref{M-1} is satisfied with $u_0(t,x)=1$.
	\end{enumerate} 
	Because of \eqref{E-3} and \eqref{E-4}, to show (1) and (2)  one has only to show that for all fixed $n\ge 1$, $\int_{\R^d} \big(\int_0^t G_{t-s}(x-y) S_{n-1}\big(g_{n-1}(\cdot,s,y)\big) ds \big)W_\varepsilon(y) dy$ converges to $S_n\big(g_n(\cdot, t,x)\big)$ in ${\cal L}^2(\Omega, {\cal F}, \P)$.  This is done in Lemma \ref{L}, Equation \eqref{P-27}.   
	
	To prove Part (ii) of Theorem \ref{th-1}, all we need is to show that Dalang's condition is necessary for
	$$
	\E\big[S_2\big(g_2(\cdot, t, 0)\big)\big]^2<\infty
	$$
	with any $t>0$. Indeed,
	$$
	\begin{aligned}
		&\E\big[S_2\big(g_2(\cdot, t, 0)\big)\big]^2\\
		&=\sum_{{\cal D}\in \Pi_2}\int_{(\R^d)^4}dx_1dx_2dx_3dx_4\bigg(\prod_{{\cal D}\in\Pi_2}\gamma(x_j-x_k)\bigg)
		g_2(x_1,x_2, t,0)g_2(x_3, x_4, t, 0)\cr
		&\ge\int_{(\R^d)^4}dx_1dx_2dx_3dx_4\gamma(x_1-x_2)\gamma(x_3-x_4)
		g_2(x_1,x_2, t,0)g_2(x_3, x_4, t, 0)\\
		&=\bigg(\int_{(\R^d)^2}\gamma(x_2-x_1)g(x_1, x_2, t,0)dx_1dx_2\bigg)^2
	\end{aligned}
	$$
	and
	$$
	\begin{aligned} 
		&\int_{(\R^d)^2}\gamma(x_2-x_1)g(x_1, x_2, t,0)dx_1dx_2\\
		&=\int_{[0,t]_<^2}ds_1ds_2\int_{(\R^d)^2}\gamma(x_2-x_1)G(s_1,x_1)G(s_2-s_1, x_2-x_1)dx_1dx_2\\
		& =\int_{[0,t]_<^2}\bigg(\int_{\R^d}G(s_1,x)dx\bigg)\bigg(\int_{\R^d}\gamma(x)G(s_2-s_1, x)dx\bigg)ds_1ds_2\\
		&=\int_{[0,t]_<^2} s_1\bigg[\int_{\R^d}{\sin(\vert\xi\vert (s_2-s_1))\over\vert\xi\vert}\mu(d\xi)\bigg]ds_1
		=\int_{\R^d}{\mu(d\xi)\over\vert\xi\vert}\int_0^t s_1\bigg[\int_0^{t-s_1}\sin (\vert\xi\vert s_2)ds_2\bigg]ds_1\\
		&=\int_0^t s_1\bigg[\int_{\R^d}{1-\cos\big(\vert \xi\vert (t-s_1)\big)\over\vert\xi\vert^2}\mu(d\xi)\bigg]ds_1\,. 
	\end{aligned}
	$$
	Clearly, the finiteness on the right hand side leads to   Dalang's condition (\ref{intro-6}).
\end{proof}

\section{Proof of Theorem \ref{th-2}}\label{s.5}

From the expansion (\ref{M-3}) and the stationarity of
the Stratonovich moment in $x$, a formal algebra leads to
$$
\E u^p(t,x)=\sum_{n=0}^\infty\sum_{l_1+\cdots +l_p=n}\E\prod_{j=1}^p
S_{l_j}\big(g_{l_j}(\cdot, t,0)\big)
=\sum_{n=0}^\infty\sum_{l_1+\cdots +l_p=2n}\E\prod_{j=1}^p
S_{l_j}\big(g_{l_j}(\cdot, t,0)\big)\,, 
$$
where the second equality follows from the fact ((\ref{M-17})) that
$$
\E\prod_{j=1}^p
S_{l_j}\big(g_{l_j}(\cdot, t,0)\big)=0
$$
whenever $l_1+\cdots+l_p$ is odd. Moreover,  the expansion
for $\E u^p(t,x)$ appears as
a positive series. Consequently $\E u^p(1,x)> 0$. 

Mathematically, under   Dalang's condition (\ref{intro-6})
the Stratonovich expansion (\ref{M-3}) converges in
${\cal L}^p(\Omega,{\cal F},\P)$ for any $p>0$. Indeed, it is enough to exam
this for all even numbers $p$. This follows from the estimate
$$
\E\bigg\vert\sum_{n=N+1}^{N+m}S_n\big(g_n(\cdot, t,0)\big)\bigg\vert^p
\le\sum_{n:2n\ge N+1}\sum_{l_1+\cdots +l_p=2n}\E\prod_{j=1}^p
S_{l_j}\big(g_{l_j}(\cdot, t,0)\big)\,. 
$$
Therefore, the claimed ${\cal L}^p$-convergence relies on the fact
$$
\sum_{n=0}^\infty\sum_{l_1+\cdots +l_p=2n}\E\prod_{j=1}^p
S_{l_j}\big(g_{l_j}(\cdot, t,0)\big)<\infty
$$
which appears as a direct consequence of (\ref{LDP-1}) and
(\ref{LDP-3}) below.

By (\ref{intro-8}) and (\ref{M-13}), in addition, one can verify that
\begin{align}\label{LDP-1}
	\sum_{l_1+\cdots +l_p=2n}\E\prod_{j=1}^p
	S_{l_j}\big(g_{l_j}(\cdot, t,0)\big)=t^{(4-\alpha)n}
	\sum_{l_1+\cdots +l_p=2n}\E\prod_{j=1}^p
	S_{l_j}\big(g_{l_j}(\cdot, 1,0)\big)\,,  \hskip.2in\forall t>0\,. 
\end{align}
Therefore, (\ref{E-3}) can be written as
\begin{align}\label{LDP-2}
	\E u^p(t,x)=\sum_{n=0}^\infty t^{(4-\alpha)n}\bigg(\sum_{l_1+\cdots +l_p=2n}\E\prod_{j=1}^p
	S_{l_j}\big(g_{l_j}(\cdot, 1,0)\big)\bigg)
\end{align}
for each $p=1,2,\cdots$.

\bigskip

\begin{proof}[Proof of Theorem \ref{th-2}]  First, we claim 
	\begin{align}\label{LDP-3}
		&\lim_{n\to\infty}\frac{1}{ n}\log
		(n!)^{3-\alpha}\bigg(\sum_{l_1+\cdots+l_p=2n}
		\E \prod_{j=1}^p S_{l_j}\big(g_{l_j}(\cdot, 1,0)\big)\bigg)=\log\Big(\frac{1}{ 2}\Big)^{3-\alpha} p^{4-\alpha}\bigg(\frac{2{\cal M}^{1/2}}{ 4-\alpha}\bigg)^{4-\alpha}
	\end{align}
	for each integer $p\ge 1$. 
	In next subsections we shall prove  the upper bound part of this claim in   (\ref{LDP-14}) and  the lower bound  part in \eqref{LDP-21}.  
	
	After we established    (\ref{LDP-3})  the proof of    (\ref{intro-9}) is easy and can be seen through
	the following computation:  From  (\ref{LDP-2})  and then \eqref{LDP-3}   it follows 
	\begin{align}\label{LDP-4}
		\lim_{t\to\infty}t^{-\frac{4-\alpha}{ 3-\alpha}}\log u^p(t,x)&=\lim_{t\to\infty}t^{-\frac{4-\alpha}{ 3-\alpha}}\log\sum_{n=0}^\infty t^{(4-\alpha)n}
		\bigg(\sum_{l_1+\cdots +l_p=2n}\E\prod_{j=1}^pS_{l_j}\big(g_{l_j}(\cdot, 1,0)\big)\bigg)\\
		&=\lim_{t\to\infty}t^{-\frac{4-\alpha}{ 3-\alpha}}\log\sum_{n=0}^\infty \frac{t^{(4-\alpha)n}}{ (n!)^{3-\alpha}}
		\bigg( \Big(\frac{1}{ 2}\Big)^{3-\alpha} p^{4-\alpha}\bigg(\frac{ 2{\cal M}^{1/2}}{ 4-\alpha}
		\bigg)^{4-\alpha}\bigg)^n\nonumber\\
		&=\frac{3-\alpha}{ 2}p^{\frac{4-\alpha}{ 3-\alpha}}\Big(\frac{2{\cal M}^{1/2}}{ 4-\alpha}
		\Big)^{\frac{4-\alpha}{ 3-\alpha}}\,, \nonumber
	\end{align}
	where the last step follows  from the following  elementary fact  of the 
	asymptotics of the Mittag-Leffler function (Lemma A.3, \cite{BCC}): 
	\begin{align}\label{LDP-5}
		\lim_{b\to\infty}b^{-1/\gamma}\log\sum_{n=0}^\infty\frac{\theta^nb^n}{ (n!)^\gamma}=\gamma\theta^{1/\gamma}\,, 
		\hskip.2in \theta>0
	\end{align}
	with $\gamma =3-\alpha$ and with $b=t^{4-\alpha}$.

	The proof for the upper bound of (\ref{intro-9}) is given in \eqref{LDP-7}
	of  Lemma \ref{L-4};
	and the lower bound is established in (\ref{LDP-26}).  
\end{proof}

\subsection{Upper bounds of  (\ref{intro-9}) and (\ref{intro-10})}\label{5-1}

\begin{lemma}\label{L-4} Under the condition in Theorem \ref{th-2},  we have the following statements. 
	\begin{enumerate}
		\item[(1)] for any $\lambda_1,\cdots, \lambda_p>0$ and $p=1,2,\cdots$ 
		\begin{align}\label{LDP-6}
			&\limsup_{n\to\infty}\frac{1}{ n}\log \frac{1}{ n!}\int_{(\R^+)^p}dt_1\cdots dt_p
			\exp\Big\{-\sum_{j=1}^p\lambda_j t_j\Big\}
			\bigg(\sum_{l_1+\cdots+l_p=2n}
			\E \prod_{j=1}^pS_{l_j}\big(g_{l_j}(\cdot, t_j,0)\big)\bigg)\\
			&\qquad \le\log 2{\cal M}^{\frac{ 4-\alpha}{ 2}}+\frac{4-\alpha}{ 2}
			\sum_{j=1}^p\frac{ \lambda_j^{-2}\log\lambda_j^{-2}}{ \lambda_1^{-2}+\cdots+\lambda_p^{-2}}\,. \nonumber
		\end{align}
		\item[(2)] for any $t>0$,
		\begin{align}\label{LDP-7} 
			\limsup_{p\to\infty}p^{-\frac{4-\alpha}{3-\alpha}}\log\E \vert u(t,0)\vert^p
			\le \frac{3-\alpha}{ 2}t^{\frac{4-\alpha}{ 3-\alpha}}
			\bigg(\frac{2{\cal M}^{1/2}}{ 4-\alpha}\bigg)^{\frac{4-\alpha}{ 3-\alpha}}\,. 
		\end{align}  
	\end{enumerate}
\end{lemma}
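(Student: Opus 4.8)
The plan is to prove Part~(1) by reducing the Laplace-transformed moment sum to a Brownian intersection local time functional, and then to deduce Part~(2) by inverting the Laplace transform and summing a Mittag--Leffler series. First I would apply Corollary~\ref{co-1} to rewrite the expression inside the $\limsup$ in \eqref{LDP-6} as
\[
\Big(\frac12\Big)^{3n}\frac{1}{(n!)^2}\Big(\prod_{j=1}^p\frac{\lambda_j}{2}\Big)\int_{(\R^+)^p}dt_1\cdots dt_p\,\exp\Big\{-\frac12\sum_{j=1}^p\lambda_j^2 t_j\Big\}\,M_n(t_1,\dots,t_p),
\]
where $M_n(t_1,\dots,t_p)=\E_0\big[\sum_{j,k=1}^p\int_0^{t_j}\!\int_0^{t_k}\gamma(B_j(s)-B_k(r))\,ds\,dr\big]^n$. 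By the homogeneity \eqref{intro-8} together with Brownian scaling, $M_n$ is jointly homogeneous of degree $(2-\alpha/2)n$ under a common dilation of the $t_j$; combined with the factor $2$ arising from the $\lambda_j^2$ in the exponential this is the source of the exponent $4-\alpha$. The problem then splits into a Laplace-type analysis of the $t$-integral and the large-$n$ rate of the Brownian moment.

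For the $t$-integration I would perform a saddle-point analysis in which the total moment ``budget'' $n$ is optimally distributed among the $p$ coordinates, coordinate $j$ receiving a weight proportional to $\lambda_j^{-2}$; the resulting Gamma-function/Stirling asymptotics produce exactly the term $\frac{4-\alpha}{2}\sum_j\frac{\lambda_j^{-2}\log\lambda_j^{-2}}{\lambda_1^{-2}+\cdots+\lambda_p^{-2}}$. The per-coordinate contribution is then governed by the high-moment asymptotics of the intersection local time, which is the heart of the matter: one shows that the exponential-in-$n$ rate of $(n!)^{-2}M_n$ at a common scale is the constant $\log 2{\cal M}^{(4-\alpha)/2}$ appearing in \eqref{LDP-6}, independently of $p$. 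Here the Gaussian structure is decisive: conditionally on the paths, $\sum_j\int_0^{t_j}\dot W(B_j(s))\,ds$ is centred Gaussian with variance $\sum_{j,k}\int\!\int\gamma$, so moments of the intersection local time are read off from $\E_0\exp\{\frac{\theta^2}{2}\sum_{j,k}\int\!\int\gamma\}$. Since this functional grows super-linearly in time, I would work at the Donsker--Varadhan scale and invoke the Gagliardo--Nirenberg inequality, whose sharp constant is precisely ${\cal M}$ in \eqref{intro-11} (as in \cite{Chen-1,Chen-2,Chen-3}); this delivers the matching upper bound and treats the self- and mutual-intersection terms simultaneously.

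For Part~(2) I would convert Part~(1) into a pointwise bound on $A_n^{(p)}:=\sum_{l_1+\cdots+l_p=2n}\E\prod_j S_{l_j}(g_{l_j}(\cdot,1,0))$. Taking all $\lambda_j=\lambda$ in \eqref{LDP-6}, restricting the $t$-integral to $[a,\infty)^p$, and using that $A_n^{(p)}(t)$ is non-negative, non-decreasing, and by \eqref{LDP-1} homogeneous of degree $(4-\alpha)n$ on the diagonal, I would bound $A_n^{(p)}$ in terms of the transform and optimise over $a$ and $\lambda$. The optimal choice $a=(4-\alpha)n/(p\lambda)$ is what manufactures the factor $p^{4-\alpha}$, yielding $A_n^{(p)}\le (n!)^{-(3-\alpha)}\big[(\tfrac12)^{3-\alpha}p^{4-\alpha}(\tfrac{2{\cal M}^{1/2}}{4-\alpha})^{4-\alpha}\big]^{n}e^{o(n)}$. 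Substituting into $\E u^p(t,0)=\sum_n t^{(4-\alpha)n}A_n^{(p)}$ and summing with the Mittag--Leffler asymptotics \eqref{LDP-5} (with $\gamma=3-\alpha$ and $\theta\propto p^{4-\alpha}\to\infty$) gives \eqref{LDP-7} after letting $p\to\infty$.

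The main obstacle is the core high-moment estimate: proving the sharp ${\cal M}$-upper bound for the moments of the combined self- and mutual-intersection local time under only Dalang's condition and the homogeneity, and doing so with enough uniformity in $p$ that the passage $p\to\infty$ in Part~(2) is legitimate. The super-linear growth of the intersection functional rules out a fixed-$\theta$ exponential moment and forces the Donsker--Varadhan scaling, while the square root in the definition of ${\cal M}$ must be matched exactly against the conditional Gaussian variance $\sum_{j,k}\int\!\int\gamma$.
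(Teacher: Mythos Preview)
Your overall architecture is right: start from Corollary~\ref{co-1}, extract the sharp constant ${\cal M}$ from the large-$n$ asymptotics of the Brownian intersection moment, then recover Part~(2) by Tauberian/Mittag--Leffler summation. But the heart of the paper's argument is a step you do not have, and without it your ``saddle-point analysis in which the moment budget is distributed among the $p$ coordinates'' cannot get started.

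The functional $M_n(t_1,\dots,t_p)$ couples all $p$ Brownian paths through the mutual-intersection terms $\int_0^{t_j}\!\int_0^{t_k}\gamma(B_j-B_k)$, so there is no a~priori factorisation over $j$. The paper's key device is to write, via Fourier transform,
\[
\sum_{j,k=1}^p\int_0^{t_j}\!\!\int_0^{t_k}\gamma\big(B_j(s)-B_k(r)\big)\,ds\,dr
=\int_{\R^d}\mu(d\xi)\,\Big|\sum_{j=1}^p\int_0^{t_j}e^{i\xi\cdot B_j(s)}\,ds\Big|^2
\]
and then apply convexity of $|\cdot|^2$ with weights $t_j/(t_1+\cdots+t_p)$ to bound this by $(t_1+\cdots+t_p)\sum_j t_j^{-1}\int_0^{t_j}\!\int_0^{t_j}\gamma(B_j-B_j)$. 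This \emph{decoupling inequality} replaces the coupled $p$-path quantity by a sum of independent single-path self-intersections, after which Brownian scaling and a multinomial expansion give the ``budget distribution'' you are anticipating. The sharp constant then comes from the known single-path asymptotics in \cite{Chen-3}, not from a new multi-path Donsker--Varadhan estimate. Your proposal to treat self- and mutual-intersections ``simultaneously'' would require proving a $p$-path large-deviation principle with the same variational constant ${\cal M}$, which is substantially harder and is exactly what the decoupling trick avoids. Once decoupled, the $\lambda_j$-dependent term in \eqref{LDP-6} emerges from a Lagrange optimisation over the multinomial weights, with optimisers proportional to $\lambda_j^{-2}$, matching your intuition.

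For Part~(2) your plan is close to the paper's but differs in one useful detail. Rather than restricting to $[a,\infty)^p$ and optimising over $a$, the paper sets all $\lambda_j=1$, uses monotonicity of the integrand together with \eqref{LDP-1} to bound it below by $(\min_j t_j)^{(4-\alpha)n}A_n^{(p)}$, and then exploits the exact identity $\int_{(\R^+)^p}e^{-\sum t_j}(\min_j t_j)^{(4-\alpha)n}\,dt = p^{-(4-\alpha)n}\Gamma\big(1+(4-\alpha)n\big)$, coming from the fact that the minimum of $p$ i.i.d.\ exponentials is exponential with parameter $p$. This produces the factor $p^{(4-\alpha)n}$ cleanly, with the $p$-dependence of the prefactors fully explicit (see \eqref{LDP-9} and \eqref{LDP-12}); the uniformity-in-$p$ concern you flag is handled by these explicit bounds rather than by an abstract argument. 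After that, Mittag--Leffler summation with $\gamma=3-\alpha$ gives \eqref{LDP-7} as you describe. The reduction from real $p$ to even integers, which your proposal omits, is done by the interpolation $\|u\|_p\le\|u\|_{2\langle p/2\rangle}$.
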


\begin{proof} The proof starts with the moment representation in Corollary \ref{co-1}.
	On the right hand side of (\ref{P-11}), we
	perform the estimation   by Fourier transform
	$$
	\begin{aligned}
		&\sum_{j,k=1}^p\int_0^{t_j}\!\int_0^{t_k}
		\gamma\big(B_j(s)-B_k(r)\big)dsdr\\
		&\qquad =\int_{\R^d}\mu(d\xi)\bigg\vert\sum_{j=1}^p\int_0^{t_j}e^{i\xi\cdot B_j(s)}ds
		\bigg\vert^2\\
		&\qquad =(t_1+\cdots +t_p)^2\int_{\R^d}\mu(d\xi)\bigg\vert\sum_{j=1}^p
		{t_j\over t_1+\cdots +t_p}
		{1\over t_j}\int_0^{t_j}e^{i\xi\cdot B_j(s)}ds\bigg\vert^2\\
		&\qquad \le(t_1+\cdots +t_p)\sum_{j=1}^p
		t_j\int_{\R^d}\mu(d\xi)\bigg\vert
		{1\over t_j}\int_0^{t_j}e^{i\xi\cdot B_j(s)}ds\bigg\vert^2\\
		&\qquad =(t_1+\cdots +t_p)\sum_{j=1}^p
		{1\over t_j}\int_0^{t_j}\!\int_0^{t_j}\gamma\big(B_j(s)-B_j(r)\big)dsdr\\
		&\qquad \buildrel d\over =(t_1+\cdots +t_p)\sum_{j=1}^p
		t_j^{2-\alpha\over 2}\int_0^1\!\int_0^1\gamma\big(B_j(s)-B_j(r)\big)dsdr\,. 
	\end{aligned}
	$$
	The advantage of the above inequality is to  replace the sum of dependent  quantities  by the sum  of independent ones, where the last step
	follows from scaling
	$$
	\int_0^{t_j}\!\int_0^{t_j}\gamma\big(B_j(s)-B_j(r)\big)dsdr\buildrel d\over =t_j^{4-\alpha\over 2}
	\int_0^1\!\int_0^1\gamma\big(B_j(s)-B_j(r)\big)dsdr\,, \hskip.2in j=1,\cdots, p
	$$
	and the independence of  the Brownian motions.
	
	Combining the above result with  (\ref{P-14}) gives  
	\begin{align} \label{e.5.8} 
		&\int_{(\R^+)^p}dt_1\cdots dt_p
		\exp\Big\{-\sum_{j=1}^p\lambda_jt_j\Big\}\sum_{l_1+\cdots+l_p=2n}
		\E \prod_{j=1}^p  S_{l_j}\big(g_{l_j}(\cdot, t_j,0)\big) \nonumber\\
		&\qquad \le\Big({1\over 2}\Big)^{3n}{1\over n!}
		\bigg(\prod_{j=1}^p{\lambda_j\over 2}\bigg)
		\int_{(\R^+)^p}dt_1\cdots dt_p
		\exp\Big\{-{1\over 2}\sum_{j=1}^p\lambda_j^2t_j\Big\}(t_1+\cdots+t_p)^n
		\nonumber\\
		&\qquad \qquad \times\E_0\bigg[\sum_{j=1}^p
		t_j^{2-\alpha\over 2}\int_0^1\!\int_0^1\gamma\big(B_j(s)-B_j(r)\big)dsdr\bigg]^n
		\nonumber\\
		&\qquad =\bigg(\prod_{j=1}^p{\lambda_j\over 2}\bigg)
		\Big({1\over 2}\Big)^{3n}\sum_{l_1+\cdots+l_p=n}{1\over l_1!\cdots l_p!}
		\bigg\{\prod_{j=1}^p\E_0\bigg[\int_0^1\!\int_0^1
		\gamma\big(B(s)-B(r)\big)dsdr\bigg]^{l_j}\bigg\} \nonumber \\
		&\qquad \qquad \times\int_{(\R^+)^p}dt_1\cdots dt_p
		(t_1+\cdots +t_p)^n\exp\Big\{-{1\over 2}\sum_{j=1}^p \lambda_j^2 t_j\Big\}
		\prod_{j=1}^pt_j^{{2-\alpha\over 2}l_j}\,.  
	\end{align}  
	By   \cite[Theorem 1.1]{Chen-3}, we see 
	$$
	\begin{aligned}
		&\lim_{t\to\infty}{1\over t}\log\E_0\exp\bigg\{\bigg(\int_0^t\!\int_0^t
		\gamma\big(B(s)-B(r)\big)dsdr\bigg)^{1/2}\bigg\}\\
		&\qquad=\sup_{g\in{\cal F}_d}\bigg\{\bigg(\int_{\R^d\times\R^d}\gamma(x-y)g^2(x)g^2(y)dxdy\bigg)^{1/2}
		-{1\over 2}\int_{\R^d}\vert\nabla g(x)\vert^2dx\bigg\}\\
		&\qquad=2^{\alpha\over 4-\alpha}{\cal M}\,. 
	\end{aligned}
	$$
	By (\ref{intro-8}) and Brownian scaling,
	$$
	\int_0^t\!\int_0^t
	\gamma\big(B(s)-B(r)\big)dsdr\buildrel d\over = t^{4-\alpha\over 2}\int_0^1\!\int_0^1
	\gamma\big(B(s)-B(r)\big)dsdr
	$$
	we can rewrite it as
	$$
	\lim_{t\to\infty}{1\over t}\log\E_0\exp\bigg\{t^{4-\alpha\over 4}\bigg(\int_0^1\!\int_0^1
	\gamma\big(B(s)-B(r)\big)dsdr\bigg)^{1/2}\bigg\}=2^{\alpha\over 4-\alpha}{\cal M}\,. 
	$$

	On the other hand, by Taylor's  expansion and the positivity of $\gamma$, 
	we have  
	$$
	\begin{aligned}
		&{1\over (2n)!}t^{{4-\alpha\over 2}n}\E_0\bigg[\int_0^1\!\int_0^1
		\gamma\big(B(s)-B(r)\big)dsdr\bigg]^n\\
		&\qquad \le\E_0\exp\bigg\{t^{4-\alpha\over 4}\bigg(\int_0^1\!\int_0^1
		\gamma\big(B(s)-B(r)\big)dsdr\bigg)^{1/2}\bigg\}\,.
	\end{aligned}
	$$
	For any $\theta>0$, taking $t=\theta n$ and by Stirling's formula,
	$$
	\begin{aligned}
		&\limsup_{n\to\infty}{1\over n}\log (n!)^{-\alpha/2}\E_0\bigg[\int_0^1\!\int_0^1
		\gamma\big(B(s)-B(r)\big)dsdr\bigg]^n\\
		&\qquad \le -{4-\alpha\over 2}+\log 4+2^{\alpha\over 4-\alpha}{\cal M}\theta
		-{4-\alpha\over 2}\log\theta\,. 
	\end{aligned}
	$$
	Picking  the minimizer
	$$
	\theta=2^{-{\alpha\over 4-\alpha}}{4-\alpha\over 2{\cal M}}
	$$
	yields 
	$$
	\limsup_{n\to\infty}{1\over n}\log (n!)^{-\alpha/2}\E_0\bigg[\int_0^1\!\int_0^1
	\gamma\big(B(s)-B(r)\big)dsdr\bigg]^n
	\le\log 2^4\Big({{\cal M}\over 4-\alpha}\Big)^{4-\alpha\over 2}\,. 
	$$
	Consequently, for any given $\delta>0$ there is $C_\delta>0$ such that
	$$
	\E_0\bigg[\int_0^1\!\int_0^1
	\gamma\big(B(s)-B(r)\big)dsdr\bigg]^n\le C_\delta (n!)^{\alpha/2}
	\bigg((1+\delta)2^4\Big({{\cal M}\over 4-\alpha}\Big)^{4-\alpha\over 2}\bigg)^n\,, 
	\hskip.2in n=1,2,\cdots
	$$
	Substituting this bound into \eqref{e.5.8} gives 
	\begin{align}\label{LDP-8}
		&\int_{(\R^+)^p}dt_1\cdots dt_p
		\exp\Big\{-\sum_{j=1}^p\lambda_jt_j\Big\}\sum_{l_1+\cdots+l_p=2n}
		\E \prod_{j=1}^pS_{l_j}\big(g_{l_j}(\cdot, t_j,0)\big)\\
		&\qquad\le \bigg(\prod_{j=1}^p{C_\delta\lambda_j\over 2}\bigg)
		\bigg(2(1+\delta)\Big({{\cal M}\over 4-\alpha}\Big)^{4-\alpha\over 2}\bigg)^n
		\sum_{l_1+\cdots+l_p=n}\bigg(\prod_{j=1}^p(l_j!)^{-{2-\alpha\over 2}}\bigg)\nonumber\\
		&\qquad\qquad  \times\int_{(\R^+)^p}dt_1\cdots dt_p
		(t_1+\cdots +t_p)^n\exp\Big\{-{1\over 2}\sum_{j=1}^p \lambda_j^2t_j\Big\}
		\prod_{j=1}^pt_j^{{2-\alpha\over 2}l_j}\,. \nonumber
	\end{align}

	For each $(l_1,\cdots, l_p)$,   we can write the above multiple integral as 
	$$
	\begin{aligned}
		&\int_{(\R^+)^p}dt_1\cdots dt_p
		(t_1+\cdots +t_p)^n\exp\Big\{-{1\over 2}\sum_{j=1}^p \lambda_j^2t_j\Big\}
		\prod_{j=1}^pt_j^{{2-\alpha\over 2}l_j}\\
		&\qquad=\sum_{k_1+\cdots+k_p=n}{n!\over k_1!\cdots k_p!}\int_{(\R^+)^p}dt_1\cdots dt_p
		\bigg(\prod_{j=1}^p t_j^{k_j+{2-\alpha\over 2}l_j}\bigg)
		\exp\Big\{-{1\over 2}\sum_{j=1}^p \lambda_j^2t_j\Big\}\\
		&\qquad=\sum_{k_1+\cdots+k_p=n}{n!\over k_1!\cdots k_p!}\prod_{j=1}^p
		\Big({2\over\lambda_j^2}\Big)^{k_j+{2-\alpha\over 2}l_j+1}\Gamma\Big(k_j+{2-\alpha\over 2}l_j+1\Big)\,. 
	\end{aligned}
	$$
	In the sequel, we shall use  the Stirling formula of the following form: 
	$$
	n^ne^{-n}\le \Gamma(n+1)\le n^{n+1}e^{-{n+1}}\,, \hskip.2in n=1,2,\cdots
	$$
	By using this type of Stirling's  formula and by routine simplification
	$$
	\begin{aligned}
		&\sum_{l_1+\cdots+l_p=n}
		\bigg(\prod_{j=1}^p(l_j!)^{-{2-\alpha\over 2}}\bigg)
		\int_{(\R^+)^p}dt_1\cdots dt_p(t_1+\cdots +t_p)^n\exp\Big\{-{1\over 2}\sum_{j=1}^p \lambda_j^2t_j\Big\}
		\prod_{j=1}^pt_j^{{2-\alpha\over 2}l_j}\\
		&\qquad\le n!C^p\Big({2-\alpha\over 2}\Big)^{{2-\alpha\over 2}n}
		\Big(2\sum_{j=1}^p{1\over\lambda_j^2}\Big)^{{4-\alpha\over 2}n}\\
		&\qquad\qquad\qquad \times \sum_{\stackrel{\scriptstyle k_1+\cdots +k_p =n}{\scriptstyle l_1+\cdots +l_p =n}}\prod_{j=1}^p
		\bigg({k_j+{2-\alpha\over 2}l_j\over k_j}\bigg)^{k_j}
		\bigg({k_j+{2-\alpha\over 2}l_j\over {2-\alpha\over 2}l_j}\bigg)^{{2-\alpha\over 2}l_j}
		\theta_j^{k_j+{2-\alpha\over 2}l_j}\,, 
	\end{aligned}
	$$
	where $C>0$ is a constant independent of $n$ and $p$, and
	$$
	\theta_j=\Big({1\over\lambda_1^2}+\cdots +{1\over\lambda_p^2}\Big)^{-1}{1\over\lambda_j^2}\,, \hskip.2in
	j=1,\cdots, p\,. 
	$$
	
	It is straightforward to check that the Lagrange problem
	\[
	\begin{aligned}
		\max\bigg\{&\prod_{j=1}^p\Big({x_j+y_j\over x_j}
		\Big)^{x_j}\Big({x_j+y_j\over y_j}\Big)^{y_j}\theta_j^{x_j+y_j};
		\hskip.1in x_1+\cdots +x_p=n,\hskip.05in \\
		& \hbox{and}\hskip.1in y_1+\cdots +y_p={2-\alpha\over 2} n\,,  x_1,\cdots, x_p, y_1,\cdots, y_p> 0\bigg\}
	\end{aligned}
	\]
	has the solution
	$$
	x_j=\theta_jn\hskip.1in\hbox{and}\hskip.1in y_j={2-\alpha\over 2}\theta_jn\,, \hskip.2in j=1,\cdots, p\,. 
	$$
	%With the replacements $l_j$ and $k_j$ by  $\theta_j n$ ($j=1,\cdots,p$), t
	Therefore, since $\sum_{j=1}^p\theta_j=1$
	$$
	\begin{aligned}
		&\prod_{j=1}^p
		\bigg({k_j+{2-\alpha\over 2}l_j\over k_j}\bigg)^{k_j}
		\bigg({k_j+{2-\alpha\over 2}l_j\over {2-\alpha\over 2}l_j}\bigg)^{{2-\alpha\over 2}l_j}
		\theta_j^{k_j+{2-\alpha\over 2}l_j}\\
		&\qquad\le \prod_{j=1}^p\Big({4-\alpha\over 2}\Big)^{\theta_jn}
		\bigg({4-\alpha\over 2-\alpha}\bigg)^{{2-\alpha\over 2}\theta_j n}
		\theta_j^{{4-\alpha\over 2}\theta_jn}\\
		&\qquad=\Big({4-\alpha\over 2}\Big)^{n}\bigg({4-\alpha\over 2-\alpha}\bigg)^{{2-\alpha\over 2}n}
		\prod_{j=1}^p\theta_j^{{4-\alpha\over 2}\theta_jn}
		=\bigg({4-\alpha\over 2}\bigg)^{{4-\alpha\over 2}n}\bigg({2\over 2-\alpha}\bigg)^{{2-\alpha\over 2}n}
		\prod_{j=1}^p\theta_j^{{4-\alpha\over 2}\theta_jn}
	\end{aligned}
	$$
	uniformly over $l_1,\cdots, l_p;k_1,  \cdots, k_p$. 
	
	Summarizing our steps since (\ref{LDP-8}) and noticing
	$$
	\sum_{l_1+\cdots+l_p=n}1=\left(\begin{array}{c} n+p-1\\ p-1\end{array}\right)
	$$
	we have the bound
	\begin{align}\label{LDP-9}
		\int_{(\R^+)^p}dt_1\cdots dt_p&
		\exp\Big\{-\sum_{j=1}^p\lambda_jt_j\Big\}\sum_{l_1+\cdots+l_p=2n}
		\E \prod_{j=1}^p S_{l_j}\big(g_{l_j}(\cdot, t_j,0)\big)\\
		&\le C^pn!\left(\begin{array}{c} n+p-1\\ p-1\end{array}\right)^2
		\bigg(\prod_{j=1}^p{C_\delta\lambda_j\over 2}\bigg)
		\bigg(2(1+\delta)\Big({{\cal M}\over 4-\alpha}\Big)^{4-\alpha\over 2}\bigg)^n\nonumber\\
		&\qquad \times \bigg((4-\alpha)\sum_{j=1}^p{1\over \lambda_j^2}\bigg)^{{4-\alpha\over 2}n}
		\prod_{j=1}^p\theta_j^{{4-\alpha\over 2}\theta_jn}\,. \nonumber
	\end{align}
	This leads to (\ref{LDP-6}) as $\delta>0$ can be made arbitrarily small.
	
	The bound (\ref{LDP-9}) can also be used to   the proof of (\ref{LDP-7}).
	To see  this  we can  allow $p$ tends to infinity only along integer points.
	% and to prove
	% \begin{align}\label{LDP-10}
		%   \lim_{p\to\infty}p^{-{4-\alpha\over 3-\alpha}}\log\E u^p(t,0)\le{3-\alpha\over 2}t^{4-\alpha\over 3-\alpha}\Big(
		%   {2{\cal M}^{1/2}\over 4-\alpha}\Big)^{4-\alpha\over 3-\alpha}
		%\end{align}
		% instead of (\ref{LDP-7}).  
		This does not compromise the claim there % in (\ref{LDP-7}) 
		by the following interpolation argument:
		For any real
		and large $p\ge 1$, let $\langle p/2\rangle$ be the smallest integer larger than  or equal to $p/2$.
		Then,  by H\"older's inequality we have 
		$$
		\Big\{\E \vert u(t,0)\vert^p\Big\}^{1/p}\le\Big\{\E u^{2\langle p/2\rangle}(t,0)
		\Big\}^{1\over 2\langle p/2\rangle}\,. 
		$$
		%Applying (\ref{LDP-7}) to $2[p]$  proves (\ref{LDP-7}).
		Thus, it suffices to show  (\ref{LDP-7}) along the positive integers $p$ and
		with $\E u^p(t,0)$ instead of $\E\vert u(t,0)\vert^p$.

		By monotonicity of $g_n(\cdot, t,0)$ in $t$,
		$$
		\begin{aligned}
			\sum_{l_1+\cdots+l_p=2n} \E &\prod_{j=1}^p S_{l_j}\big(g_{l_j}(\cdot, t_j,0)\big)
			\ge \sum_{l_1+\cdots+l_p=2n} \E \prod_{j=1}^p S_{l_j}\Big(g_{l_j}\big(\cdot, \min_{1\le j\le p}t_j,0\big)\Big)\\
			&=\bigg(\sum_{l_1+\cdots+l_p=2n} \E \prod_{j=1}^p S_{l_j}\big(g_{l_j}(\cdot, 1,0)\big)\bigg)
			\Big(\min_{1\le j\le p}t_j\Big)^{(4-\alpha)n}\,, 
		\end{aligned}
		$$
		where the   last step follows from (\ref{LDP-1}). Thus,
		$$
		\begin{aligned}
			&\int_{(\R^+)^p}dt_1\cdots dt_p
			\exp\Big\{-\sum_{j=1}^pt_j\Big\}\sum_{l_1+\cdots+l_p=2n}\E \prod_{j=1}^p
			S_{l_j}\big(g_{l_j}(,\cdot, t_j,0)\big)\\
			&\qquad \ge \bigg(\sum_{l_1+\cdots+l_p=2n} \E \prod_{j=1}^p S_{l_j}\big(g_{l_j}(\cdot, 1,0)\big)\bigg)
			\int_{(\R^+)^p}dt_1\cdots dt_p\exp\Big\{-\sum_{j=1}^pt_j\Big\}
			\Big(\min_{1\le j\le p}t_j\Big)^{(4-\alpha)n}\,. 
		\end{aligned}
		$$
		By the fact that given i.i.d. exponential times $\tau_1,\cdots,\tau_p$ of parameter 1,
		$\displaystyle \min_{1\le j\le p}\tau_j$ is an exponential time with parameter $p$,
		\begin{align}\label{LDP-11}
			\int_{(\R^+)^p}dt_1\cdots dt_p &\exp\Big\{-\sum_{j=1}^pt_j\Big\}
			\Big(\min_{1\le j\le p}t_j\Big)^{(4-\alpha)n}
			=p\int_0^\infty e^{-pt} t^{(4-\alpha)n}dt\\
			&=p^{-(4-\alpha)n}\Gamma\Big(1+(4-\alpha)n\Big)\,. \nonumber
		\end{align}
		In summary,  we have 
		\begin{align}\label{LDP-12}
			&\sum_{l_1+\cdots+l_p=2n} \E \prod_{j=1}^p S_{l_j}\big(g_{l_j}(\cdot, 1,0)\big)\\
			&\qquad \le {p^{(4-\alpha)n}\over \Gamma\big(1+(4-\alpha)n\big)}\int_{(\R^+)^p}dt_1\cdots dt_p
			\exp\Big\{-\sum_{j=1}^pt_j\Big\}\sum_{l_1+\cdots+l_p=2n}\E \prod_{j=1}^p
			S_{l_j}\big(g_{l_j}(,\cdot, t_j,0)\big)\nonumber\\
			&\qquad \le\Big({CC_\delta\over 2}\Big)^p\left(\begin{array}{c} n+p-1\\ p-1\end{array}\right)^2
			{n!p^{(4-\alpha)n}\over \Gamma\big(1+(4-\alpha)n\big)}\bigg(2(1+\delta){\cal M}^{4-\alpha\over 2}\bigg)^n\,, 
			\nonumber
		\end{align}
		where the second step follows directly from the bound (\ref{LDP-9}) with
		$\lambda_1=\cdots=\lambda_p=1$.
		
		Using (\ref{LDP-1}) and \eqref{LDP-2} we  then have
		$$
		\begin{aligned}
			\E u^p(t,0)&\le\Big({CC_\delta\over 2}\Big)^p\sum_{n=0}^\infty
			\left(\begin{array}{c} n+p-1\\ p-1\end{array}\right)^2
			{n!(pt)^{(4-\alpha)n}\over \Gamma\big(1+(4-\alpha)n\big)}\bigg(2(1+\delta)
			{\cal M}^{4-\alpha\over 2}\bigg)^n\\
			&\le\Big({CC_\delta\over 2}{\theta\over \theta -1}\Big)^{2p}\sum_{n=0}^\infty
			{n!(pt) ^{(4-\alpha)n}\over \Gamma\big(1+(4-\alpha)n\big)}\bigg(2\theta^2
			(1+\delta){\cal M}^{4-\alpha\over 2}\bigg)^n\,, 
		\end{aligned}
		$$
		where $\theta>1$ is arbitrary, and the second  step follows from the estimate
		$$
		\theta^{-n}\left(\begin{array}{c} n+p-1\\ p-1\end{array}\right)\le\sum_{k=0}^\infty\theta^{-k}
		\left(\begin{array}{c} k+p-1\\ p-1\end{array}\right)=\Big({\theta\over \theta-1}\Big)^p\,. 
		$$
		By the Stirling formula, $\Gamma\big(1+(4-\alpha)n\big)$ is replaceable by
		$$
		(n!)^{4-\alpha}(4-\alpha)^{(4-\alpha)n}\,. 
		$$
		By  the asymptotics of the Mittag-Leffler function (\ref{LDP-5}) with $\gamma= 3-\alpha$ and $b=p^{4-\alpha}$,  and $\theta$
		being replaced by $t ^{ 4-\alpha } \bigg(2\theta^2
		(1+\delta)\Big({{\cal M}^{1/2}\over 4-\alpha}\Big)^{4-\alpha}\bigg) $, we have 
		$$
		\begin{aligned}
			\limsup_{p\to\infty}&p^{-{4-\alpha\over 3-\alpha}}\log \E u^p(t,0)\\
			& \le \lim_{p\to\infty}p^{-{4-\alpha\over 3-\alpha}}\log \sum_{n=0}^\infty
			{(pt)^{(4-\alpha)n}\over (n!)^{3-\alpha}}\bigg(2\theta^2
			(1+\delta)\Big({{\cal M}^{1/2}\over 4-\alpha}\Big)^{4-\alpha}\bigg)^n\\
			&=(3-\alpha)t^{4-\alpha\over 3-\alpha}\bigg(2\theta^2
			(1+\delta)\Big({{\cal M}^{1/2}\over 4-\alpha}\Big)^{4-\alpha}\bigg)^{1\over 3-\alpha}\,. 
		\end{aligned}
		$$
		Letting $\delta\to 0^+$ and $\theta\to 1^+$ on the right hand side gives (\ref{LDP-7}).
	\end{proof}

	We end  this subsection  by the following statement: First, taking $\lambda_1=\cdots =\lambda_p=1$
	in (\ref{LDP-6}) leads to
	\begin{align}\label{LDP-13}
		&\limsup_{n\to\infty}{1\over n}\log {1\over n!}\int_{(\R^+)^p}dt_1\cdots dt_p
		\exp\Big\{-\sum_{j=1}^p t_j\Big\}
		\bigg(\sum_{l_1+\cdots+l_p=2n}
		\E \prod_{j=1}^pS_{l_j}\big(g_{l_j}(\cdot, t_j,0)\big)
		\bigg)\nonumber\\
		&\qquad \le\log 2{\cal M}^{4-\alpha\over 2}\,. 
	\end{align}
	
	Second, applying  (\ref{LDP-12}) to the setting of fixed integer $p\ge 1$ yields 
	\begin{align}\label{LDP-14}
		&\limsup_{n\to\infty}{1\over n}\log
		(n!)^{3-\alpha}\bigg(\sum_{l_1+\cdots+l_p=2n}
		\E \prod_{j=1}^p S_{l_j}\big(g_{l_j}(\cdot, 1,0)\big)\bigg)\nonumber \\
		&\qquad\qquad \le\log\Big({1\over 2}
		\Big)^{3-\alpha} p^{4-\alpha}\bigg({2{\cal M}^{1/2}\over 4-\alpha}
		\bigg)^{4-\alpha}\,. 
	\end{align}

	\subsection {Lower bound for (\ref{intro-9})}
	
	In this subsection we start by the lower bound correspondent to (\ref{LDP-13}).
	
	\begin{lemma}\label{L-7} Under the condition in Theorem \ref{th-2},  we have 
		\begin{align}\label{LDP-15}
			&\liminf_{n\to\infty}{1\over n}\log {1\over n!}\int_{(\R^+)^p}dt_1\cdots dt_p
			\exp\Big\{-\sum_{j=1}^pt_j\Big\}\sum_{l_1+\cdots+l_p=2n}
			\E \prod_{j=1}^pS_{l_j}\big(g_{l_j}(\cdot, t_j,0)\big)\nonumber\\
			&\qquad \ge\log 2{\cal M}^{4-\alpha\over 2}
		\end{align}
		for $p=1,2,\cdots$.
	\end{lemma}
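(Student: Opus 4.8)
The starting point is the exact Laplace representation of Corollary \ref{co-1} with $\lambda_1=\cdots=\lambda_p=1$, which expresses the quantity inside (\ref{LDP-15}) (before the factor $1/n!$) through $\E_0\big[\sum_{j,k=1}^p\int_0^{t_j}\!\int_0^{t_k}\gamma(B_j(s)-B_k(r))\,ds\,dr\big]^n$. Since $\gamma\ge 0$, the plan is to discard every term except the single diagonal block $j=k=1$, bounding
$$
\sum_{j,k=1}^p\int_0^{t_j}\!\int_0^{t_k}\gamma\big(B_j(s)-B_k(r)\big)\,ds\,dr\ \ge\ \int_0^{t_1}\!\int_0^{t_1}\gamma\big(B_1(s)-B_1(r)\big)\,ds\,dr\ =:\ L_{t_1}.
$$
The integrals over $t_2,\dots,t_p$ then factor out as the constant $2^{p-1}$, which together with the prefactor $(1/2)^p$ becomes a single $n$-independent factor $\tfrac12$, hence irrelevant in $\tfrac1n\log(\cdot)$. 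Brownian scaling $L_t\overset{d}{=}t^{(4-\alpha)/2}L_1$ converts the $t_1$-integral into $\E_0[L_1^n]\int_0^\infty e^{-t/2}t^{(4-\alpha)n/2}\,dt=\E_0[L_1^n]\,2^{(4-\alpha)n/2+1}\Gamma\big(\tfrac{(4-\alpha)n}{2}+1\big)$. Thus, modulo an elementary Stirling computation, (\ref{LDP-15}) reduces to a single-particle statement about the high moments of the self-intersection local time $L_1$.

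Precisely, what remains is the reverse of the moment estimate already established inside Lemma \ref{L-4}, namely the lower bound
$$
\liminf_{n\to\infty}\frac1n\log\,(n!)^{-\alpha/2}\,\E_0\Big[\int_0^1\!\!\int_0^1\gamma\big(B(s)-B(r)\big)\,ds\,dr\Big]^n\ \ge\ \log\!\Big(2^4\big(\tfrac{{\cal M}}{4-\alpha}\big)^{(4-\alpha)/2}\Big).
$$
The only input is the exponential-moment limit of \cite{Chen-3} recalled in the excerpt, which in the variable $\theta=t^{(4-\alpha)/4}$ reads $\lim_{\theta\to\infty}\theta^{-q}\log\E_0\exp\{\theta L_1^{1/2}\}=2^{\alpha/(4-\alpha)}{\cal M}=:\kappa$, with $q=4/(4-\alpha)>1$. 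Whereas Lemma \ref{L-4} only read off one coefficient of the exponential series (Taylor expansion), here the correspondence must be run backwards. I would do this by exponential tilting: tilting $\P_0$ by $e^{\theta L_1^{1/2}}/\E_0 e^{\theta L_1^{1/2}}$, under which $L_1^{1/2}$ concentrates near $\kappa q\,\theta^{q-1}$, produces the tail lower bound $\liminf_{\rho\to\infty}\rho^{-q'}\log\P_0\big(L_1^{1/2}\ge\rho\big)\ge -c$ with $q'=q/(q-1)=4/\alpha$; equivalently one invokes the G\"artner--Ellis lower bound (or the de Bruijn--Kasahara exponential Tauberian theorem), which is legitimate because the limiting cumulant $\kappa\theta^q$ is finite, smooth and strictly convex, so every point is exposed. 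The layer-cake inequality $\E_0[L_1^n]\ge\rho^{2n}\,\P_0\big(L_1^{1/2}\ge\rho\big)$, optimized at $\rho\asymp n^{\alpha/4}$, then yields the displayed liminf, the Legendre duality between $\kappa\theta^q$ and the rate $c\rho^{q'}$ reproducing the constant $2^4({\cal M}/(4-\alpha))^{(4-\alpha)/2}$ exactly.

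With this moment bound in hand I would substitute it into the reduced expression and carry out the Stirling estimates for $(n!)^{-2}$, $2^{(4-\alpha)n/2}$ and $\Gamma\big(\tfrac{(4-\alpha)n}{2}+1\big)$. The $\log n$ contributions cancel, since $-2+\tfrac\alpha2+\tfrac{4-\alpha}2=0$, and the surviving constants collapse, after inserting $\kappa=2^{\alpha/(4-\alpha)}{\cal M}$, to $\log 2+\tfrac{4-\alpha}{2}\log{\cal M}=\log 2{\cal M}^{(4-\alpha)/2}$, which is exactly (\ref{LDP-15}).

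The main obstacle is the moment lower bound of the second paragraph. The easy direction (exponential moment $\Rightarrow$ upper bound on a single factorial moment) was already exploited in Lemma \ref{L-4}, but the reverse Tauberian direction is delicate: extracting a genuine lower bound on a high moment from the mere growth rate of the exponential moment requires controlling the concentration of the tilted law (or, alternatively, quoting the exponential Tauberian theorem and verifying its hypotheses for $L_1$). Everything else — the positivity truncation to a single particle, the scaling identity, the Gamma integral and the Stirling arithmetic — is routine and, crucially, all of it runs in the same direction as the already-proved upper bound (\ref{LDP-13}), so that the two estimates will together pin down the limit asserted by the lemma.
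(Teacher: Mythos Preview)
Your proposal is correct and the arithmetic checks out, but the route is genuinely different from the paper's. The paper never reduces to a single Brownian motion; instead it keeps all $p$ particles and decouples them via the Fourier representation $\sum_{j,k}\int\!\!\int\gamma(B_j-B_k)=\int\mu(d\xi)\big|\sum_j\int_0^{t_j}e^{i\xi\cdot B_j}\big|^2$ together with Cauchy--Schwarz against a test function $f$ with $\int|f|^2\,d\mu=1$. After the multinomial expansion and the Laplace identity (\ref{P-7}), each factor becomes a multiple spectral integral $\int_{(\R^d)^{l}}\big(\prod f(\xi_k)\big)\prod\big\{1+|\sum_{i\ge k}\xi_i|^2\big\}^{-1}\mu^{\otimes l}(d\xi)$, whose exponential growth rate is read off directly from \cite[(3.7)--(3.9)]{BCR}; optimizing over $f$ and invoking \cite[Theorem 1.5]{BCR} identifies the constant as ${\cal M}^{(4-\alpha)/2}$. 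Your approach trades this Fourier/\cite{BCR} machinery for a Tauberian inversion of the \cite{Chen-3} exponential-moment limit: dropping to $j=k=1$ is painless because the target bound is $p$-independent, and Brownian scaling cleanly separates the $t_1$-integral from the moment $\E_0[L_1^n]$. What you gain is a more self-contained argument that bypasses \cite{BCR} entirely; what you pay is the need to import (and verify the hypotheses of) the de~Bruijn--Kasahara theorem to pass from $\log\E_0 e^{\theta L_1^{1/2}}\sim\kappa\theta^q$ to the sharp tail lower bound---a step you rightly flag as the only nontrivial one, and one that the paper's direct spectral computation avoids.
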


	\begin{proof} Notice
		$$
		\begin{aligned}
			&\sum_{j,k=1}^p\int_0^{t_j}\!\int_0^{t_k}
			\gamma\big(B_j(s)-B_k(r)\big)dsdr
			=\int_{\R^d}\mu(d\xi)\bigg\vert\sum_{j=1}^p\int_0^{t_j}e^{i\xi\cdot B_j(s)}ds
			\bigg\vert^2\\
			&\qquad \ge\bigg[\int_{\R^d}\mu(d\xi)f(\xi)\bigg(\sum_{j=1}^p\int_0^{t_j}e^{i\xi\cdot B_j(s)}ds\bigg)\bigg]^2\\
			&\qquad =\bigg[\sum_{j=1}^p\int_{\R^d}\mu(d\xi)f(\xi)\bigg(\int_0^{t_j}e^{i\xi\cdot B_j(s)}ds\bigg)\bigg]^2
		\end{aligned}
		$$
		for any non-negative $f(\xi)$ with
		\begin{align}\label{LDP-16}
			\int_{\R^d}\vert f(\xi)\vert^2\mu(d\xi)=1\,. 
		\end{align}
		Therefore, we have  
		$$
		\begin{aligned}
			&\E_0\bigg[\sum_{j,k=1}^p\int_0^{t_j}\!\int_0^{t_k}
			\gamma\big(B_j(s)-B_k(r)\big)dsdr\bigg]^n
			\ge\E_0\bigg[\sum_{j=1}^p\int_{\R^d}\mu(d\xi)f(\xi)\bigg(\int_0^{t_j}e^{i\xi\cdot B_j(s)}ds\bigg)\bigg]^{2n}\\
			&\qquad=\sum_{l_1+\cdots +l_p=2n}{(2n)!\over l_1!\cdots l_p!}\prod_{j=1}^p\E_0
			\bigg[\int_{\R^d}\mu(d\xi)f(\xi)\bigg(\int_0^{t_j}e^{i\xi\cdot B_j(s)}ds\bigg)\bigg]^{l_j}\\
			&\qquad=(2n)!\sum_{l_1+\cdots +l_p=2n}\prod_{j=1}^p\int_{(\R^d)^{l_j}}
			\mu(d\xi)\bigg(\prod_{k=1}^{l_j}f(\xi_k)\bigg)\int_{[0,t_j]_<^{l_j}}d{\bf s}
			\prod_{k=1}^{l_j}\exp\bigg\{-{s_k-s_{k-1}\over 2}\Big\vert\sum_{i=k}^{l_j}\xi_i\Big\vert^2\bigg\}\,. 
		\end{aligned}
		$$

		Taking $\lambda_1=\cdots=\lambda_p=1$ in  Corollary \ref{co-1} and inserting the above computation into the obtained expression yield  
		\begin{align}\label{LDP-17}
			&\int_{(\R^+)^p}dt_1\cdots dt_p
			\exp\Big\{-\sum_{j=1}^pt_j\Big\}\sum_{l_1+\cdots+l_p=2n}
			\E \prod_{j=1}^pS_{l_j}\big(g_{l_j}(\cdot, t_j,0)\big)\\
			&\qquad \ge \Big({1\over 2}\Big)^p\Big({1\over 2}\Big)^{3n}{(2n)!\over n!}  
			\int_{(\R^+)^p}dt_1\cdots dt_p
			\exp\Big\{-\sum_{j=1}^pt_j\Big\}  \nonumber\\
			&\qquad\qquad \times\sum_{l_1+\cdots +l_p=2n}\prod_{j=1}^p\int_{(\R^d)^{l_j}}
			\mu(d\xi)\bigg(\prod_{k=1}^{l_j}f(\xi_k)\bigg)\int_{[0,t_j]_<^{l_j}}d{\bf s}
			\prod_{k=1}^{l_j}\exp\bigg\{-{s_k-s_{k-1}\over 2}\Big\vert\sum_{i=k}^{l_j}\xi_i\Big\vert^2\bigg\} \nonumber\\
			&\qquad \ge \Big({1\over 2}\Big)^{p+3n} %\Big({1\over 2}\Big)^{2n}
			{(2n)!\over n!}
			\sum_{l_1+\cdots +l_p=2n}\prod_{j=1}^p\int_{(\R^d)^{l_j}}
			\mu(d\xi)\bigg(\prod_{k=1}^{l_j}f(\xi_k)\bigg)\nonumber\\
			&\qquad\qquad \times\int_0^\infty dt e^{-t }\int_{[0,t]_<^{l_j}}d{\bf s}
			\prod_{k=1}^{l_j}\exp\bigg\{-{s_k-s_{k-1}\over 2}\Big\vert\sum_{i=k}^{l_j}\xi_i\Big\vert^2\bigg\}
			\nonumber\\
			&\qquad =\Big({1\over 2}\Big)^{p+3n}{(2n)!\over n!}
			\sum_{l_1+\cdots +l_p=2n}\prod_{j=1}^p\int_{(\R^d)^{l_j}}
			\mu(d\xi)\bigg(\prod_{k=1}^{l_j}f(\xi_k)\bigg)\prod_{k=1}^{l_j} \int_0^\infty e^{-t }
			\exp\bigg\{-{t\over 2}\Big\vert\sum_{i=k}^{l_j}\xi_i\Big\vert^2\bigg\}dt\nonumber\\
			&\qquad =\Big({1\over 2}\Big)^{p+3n}{(2n)!\over n!}
			\sum_{l_1+\cdots +l_p=2n}\prod_{j=1}^p\int_{(\R^d)^{l_j}}
			\mu(d\xi)\bigg(\prod_{k=1}^{l_j}f(\xi_k)\bigg) \prod_{k=1}^{l_j}
			\bigg\{1+\frac{1}{2} \Big\vert\sum_{i=k}^{l_j}\xi_i\Big\vert^2\bigg\}^{-1}  \nonumber\\
			&\qquad \ge \Big({1\over 2}\Big)^{2n}{(2n)!\over n!}
			\sum_{l_1+\cdots +l_p=2n}\prod_{j=1}^p\int_{(\R^d)^{l_j}}
			\mu(d\xi)\bigg(\prod_{k=1}^{l_j}f(\xi_k)\bigg) \prod_{k=1}^{l_j}
			\bigg\{1+ \Big\vert\sum_{i=k}^{l_j}\xi_i\Big\vert^2\bigg\}^{-1}\,. \nonumber
		\end{align}

		By the computation in   \cite[(3.7)-(3.9)]{BCR}
		\begin{align}\label{LDP-18}
			\liminf_{n\to\infty}{1\over n}&\log \int_{(\R^d)^n}\mu(d\xi)\bigg(\prod_{k=1}^{n}f(\xi_k)\bigg)
			\prod_{k=1}^n\bigg\{1+\Big\vert\sum_{i=k}^{n}\xi_i\Big\vert^2\bigg\}^{-1}\\
			&\ge\log
			\sup_{\|\varphi\|_2=1}\int_{\R^d}\mu(d\xi)f(\xi)\bigg[\int_{\R^d}d\eta {\varphi(\eta)\varphi(\eta+\xi)\over
				\sqrt{(1+\vert\eta\vert^2)(1+\vert\xi+\eta\vert^2)}}\bigg]\nonumber\\
			&\buildrel\Delta\over =\log\rho(f)\,. \nonumber
		\end{align}
		For a given $\delta>0$, therefore, there is $C_\delta>0$ such that
		$$
		\int_{(\R^d)^n}\mu(d\xi)\bigg(\prod_{k=1}^{n}f(\xi_k)\bigg)
		\prod_{k=1}^n\bigg\{1+\Big\vert\sum_{i=k}^{n}\xi_i\Big\vert^2\bigg\}^{-1}
		\ge C_\delta^{-1}\Big((1-\delta)\rho(f)\Big)^n\,, \hskip.2in n=1,2,\cdots
		$$
		Together with (\ref{LDP-17}), by the Stirling formula one has 
		$$
		\begin{aligned}
			\liminf_{n\to\infty}{1\over n}&\log \int_{(\R^+)^p}dt_1\cdots dt_p
			\exp\Big\{-\sum_{j=1}^pt_j\Big\}\sum_{l_1+\cdots+l_p=2n}
			\E \prod_{j=1}^pS_{l_j}\big(g_{l_j}(\cdot, t_j,0)\big)\\
			&                  \ge 2\Big((1-\delta)\rho(f)\Big)^2\,. 
		\end{aligned}
		$$
		Letting $\delta\to 0^+$ and taking supremum over all non-negative functions
		$f$ satisfying (\ref{LDP-16})
		on the right hand side,  we  have   
		$$
		\begin{aligned}
			\liminf_{n\to\infty}{1\over n}&\log \int_{(\R^+)^p}dt_1\cdots dt_p
			\exp\Big\{-\sum_{j=1}^pt_j\Big\}\sum_{l_1+\cdots+l_p=2n}
			\E \prod_{j=1}^pS_{l_j}\big(g_{l_j}(\cdot, t_j,0)\big)\\
			&\ge \log 2 \sup_{\|\varphi\|_2=1} \int_{\R^d}\mu(d\xi)\bigg[\int_{\R^d}d\eta {\varphi(\eta)
				\varphi(\eta+\xi)\over
				\sqrt{(1+\vert\eta\vert^2)(1+\vert\xi+\eta\vert^2)}}\bigg]^2\,. 
		\end{aligned}
		$$
		Finally, the proof is completed by Theorem 1.5, \cite{BCR} (with $p=\beta =2$,
		$\sigma=\alpha$ and $\vert\cdot\vert^{-\alpha}$ being replaced by $\gamma(\cdot)$) that states
		\begin{align}\label{LDP-19}
			\sup_{\|\varphi\|_2=1} \int_{\R^d}\mu(d\xi)\bigg[\int_{\R^d}d\eta {\varphi(\eta)\varphi(\eta+\xi)\over
				\sqrt{(1+\vert\eta\vert^2)(1+\vert\xi+\eta\vert^2)}}\bigg]^2
			={\cal M}^{4-\alpha\over 2}\,. 
		\end{align}
		This completes the proof \eqref{LDP-15}.  
	\end{proof}
	
	Combining (\ref{LDP-13}) with  (\ref{LDP-15}) yields 
	\begin{align}\label{LDP-20}
		&\lim_{n\to\infty}{1\over n}\log {1\over n!}\int_{(\R^+)^p}dt_1\cdots dt_p
		\exp\Big\{-\sum_{j=1}^p t_j\Big\}
		\bigg(\sum_{l_1+\cdots+l_p=2n}
		\E \prod_{j=1}^pS_{l_j}\big(g_{l_j}(\cdot, t_j,0)\big)\bigg)\nonumber\\
		&\qquad =\log 2{\cal M}^{4-\alpha\over 2}\,. 
	\end{align}
	
	We point out that we are not able to establish the lower bound correspondent to
	(\ref{LDP-6}) as $\lambda_1,\cdots,\lambda_p$ are not equal,
	although it likely to be true.
	
	Our next goal in this  sub-section is to establish the lower bound
	corresponding to the upper bound (\ref{LDP-14}). 
	
	\begin{lemma}\label{L-5}
		Under the condition in Theorem \ref{th-2},  we have 
		\begin{align}\label{LDP-21}
			\liminf_{n\to\infty}{1\over n}\log
			(n!)^{3-\alpha}&\bigg(\sum_{l_1+\cdots+l_p=2n}
			\E \prod_{j=1}^p S_{l_j}\big(g_{l_j}(\cdot, 1,0)\big)\bigg)\\
			& \ge \log\Big({1\over 2}\Big)^{3-\alpha} p^{4-\alpha}\bigg({2{\cal M}^{1/2}\over 4-\alpha}\bigg)^{4-\alpha}\,. \nonumber
		\end{align}
	\end{lemma}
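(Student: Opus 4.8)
The plan is to extract the lower bound from a single, well chosen term of the positive series and to estimate that term through a cooperative moment inequality built on the Gaussian pairing formula \eqref{M-8}. Since $\gamma\ge0$ and $G\ge0$, formula \eqref{M-10} shows that every summand $\E\prod_{j=1}^pS_{l_j}(g_{l_j}(\cdot,1,0))$ is non-negative; working first along the subsequence $p\mid n$ and writing $m:=n/p$, I may therefore retain only the balanced configuration,
\[
\sum_{l_1+\cdots+l_p=2n}\E\prod_{j=1}^pS_{l_j}(g_{l_j}(\cdot,1,0))\ \ge\ c_n:=\E\prod_{j=1}^pS_{2m}(g_{2m}(\cdot,1,0)).
\]
Running the computation of Corollary \ref{co-1} with $p$ \emph{separate} block times $t_1,\dots,t_p$ (each block treated by \eqref{P-8}--\eqref{P-10}) gives the identity
\[
\int_{(\R^+)^p}\!\! e^{-\sum_jt_j}\,\E\prod_{j=1}^pS_{2m}(g_{2m}(\cdot,t_j,0))\,dt=C_{m,p}\int_{(\R^+)^p}\!\! e^{-\frac12\sum_jt_j}\,\E_0\!\otimes\!\E\Big[\prod_{j=1}^pY_j^{2m}\Big]\,dt=:\Phi_n,
\]
where $Y_j:=\int_0^{t_j}\dot W(B_j(s))\,ds$ and $C_{m,p}=((2m)!)^{-p}2^{-(2m+1)p}$. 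Because each factor is non-decreasing in its own time and, by \eqref{intro-8},\eqref{M-13}, $\E\prod_jS_{2m}(g_{2m}(\cdot,\tau,0))=\tau^{(4-\alpha)n}c_n$ at any common time $\tau$, one has $\E\prod_jS_{2m}(g_{2m}(\cdot,t_j,0))\le(\max_jt_j)^{(4-\alpha)n}c_n\le c_n\sum_jt_j^{(4-\alpha)n}$, whence $\Phi_n\le p\,\Gamma(1+(4-\alpha)n)\,c_n$, i.e. $c_n\ge\big(p\,\Gamma(1+(4-\alpha)n)\big)^{-1}\Phi_n$.

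The crucial input is now a moment inequality for the conditionally Gaussian vector. Conditionally on the independent motions $B_1,\dots,B_p$, the vector $(Y_1,\dots,Y_p)$ is centred Gaussian with covariances $Q_{jk}:=\int_0^{t_j}\!\int_0^{t_k}\gamma(B_j(s)-B_k(r))\,ds\,dr\ge0$ (non-negative because $\gamma\ge0$). Applying \eqref{M-8} to the $2n$-fold product writes $\E\big[\prod_jY_j^{2m}\,\big|\,B\big]=\sum_{\cal D}\prod_{(a,b)\in{\cal D}}Q_{v(a)v(b)}$ as a sum over the $(2n)!/(2^nn!)$ pair partitions of the $2n$ factors (block $j$ supplying $2m$ of them); bounding every product below by $(\min_{j,k}Q_{jk})^n$ gives
\[
\E\Big[\prod_{j=1}^pY_j^{2m}\,\Big|\,B\Big]\ \ge\ \frac{(2n)!}{2^nn!}\Big(\min_{j,k}Q_{jk}\Big)^n.
\]
This is the decisive step: the full pairing count $(2n)!/(2^nn!)$ exceeds the within-block count $\big((2m)!/(2^mm!)\big)^p$ (obtained by keeping only same-block pairings) by a factor of order $p^{\,n}$ by Stirling. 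It is exactly this combinatorial surplus — the cooperation of the $p$ motions through cross-block pairings — that, combined with the corner weight $\Gamma(1+(4-\alpha)n)$ and the block factorials $((2m)!)^{-p}$, upgrades the exponential rate from the naive within-block value $\log(\tfrac12)^{3-\alpha}p^{3-\alpha}(\cdots)$ to the claimed $p^{4-\alpha}$.

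To finish I would reduce the cooperative functional $\E_0(\min_{j,k}Q_{jk})^n$ to the single-copy problem: restricting all $p$ motions to a common tube of small radius makes every $Q_{jk}$ comparable to one occupation integral $\int_0^1\!\int_0^1\gamma(B(s)-B(r))\,ds\,dr$, whose moment growth $(n!)^{\alpha/2}$ and whose exponential-moment constant $2^{\alpha/(4-\alpha)}{\cal M}$ are recorded in Lemma \ref{L-4} (via \cite{Chen-3} and \eqref{LDP-19}). Substituting this bound into $\Phi_n$, carrying out the Gamma integrals in $t_1,\dots,t_p$, and simplifying the factorials by Stirling's formula should produce $\liminf_n\frac1n\log(n!)^{3-\alpha}c_n\ge\log(\tfrac12)^{3-\alpha}p^{4-\alpha}\big(\tfrac{2{\cal M}^{1/2}}{4-\alpha}\big)^{4-\alpha}$, which is \eqref{LDP-21} along $p\mid n$; the divisibility restriction is then removed by the monotonicity of $g_n(\cdot,t,0)$ in $t$ and an interpolation in $n$, as around \eqref{LDP-11}.

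The main obstacle is precisely this cooperative reduction: one must bound $\E_0(\min_{j,k}Q_{jk})^n$ from below \emph{sharply enough} that the $p$ interacting motions are effectively collapsed to a single one with no loss in the exponential constant ${\cal M}$. In contrast with the pairing count, which is soft (pure positivity of $\gamma$), this step needs quantitative Brownian tube / large-deviation estimates in the spirit of Lemma \ref{L-7} and \cite{Chen-3}, together with a careful matching of the numerical constants coming from the block factorials $((2m)!)^{-p}$, the corner factor $\Gamma(1+(4-\alpha)n)$, and the time Gamma integrals. A secondary and more routine point — handled exactly as in Corollary \ref{co-1} and Theorem \ref{th-4} — is the justification of the separate-time Laplace identity above, namely the Fubini interchanges and the $\varepsilon\to0^+$ smoothing limits underlying the multiple Stratonovich integral.
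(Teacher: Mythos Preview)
Your proposal has a genuine gap at exactly the place you flag as the ``main obstacle'': the lower bound on $\E_0(\min_{j,k}Q_{jk})^n$ via a common tube cannot recover the sharp constant. Confining $p-1$ independent Brownian motions to an $\epsilon$-tube around $B_1$ on $[0,t]$ costs roughly $\exp\{-c(p-1)t/\epsilon^2\}$; at the relevant scale $t\sim n$ (where the self-intersection moments peak) this contributes a term $-c(p-1)/\epsilon^2$ to the rate that cannot be removed by sending $\epsilon\to0$ afterwards. Moreover, the combinatorial surplus $(2n)!/(2^nn!)\big/\big((2m)!/(2^mm!)\big)^p\sim p^n$ only yields $\log p$ at the $\tfrac1n\log$ scale, whereas the target requires $(4-\alpha)\log p$; the extra $(3-\alpha)\log p$ must come from the Brownian functional, and the crude bound by $\min Q_{jk}$ throws this away. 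So the sketch, as written, does not close.

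The paper's argument is entirely different and sidesteps any direct moment bound on $\min Q_{jk}$. It exploits that both the exact rate \eqref{LDP-20} for the Laplace integral $\int_{(\R^+)^p}e^{-\sum t_j}(\cdots)\,dt$ and the upper bound \eqref{LDP-6} for \emph{tilted} Laplace integrals (arbitrary $\lambda_j$) are already in hand. One then introduces the probability measures
\[
\mu_n(A)=\frac{\int_A e^{-\sum t_j}\sum_{l_1+\cdots+l_p=2n}\E\prod_j S_{l_j}(g_{l_j}(\cdot,t_j,0))\,dt}
{\int_{(\R^+)^p} e^{-\sum t_j}\sum_{l_1+\cdots+l_p=2n}\E\prod_j S_{l_j}(g_{l_j}(\cdot,t_j,0))\,dt}
\]
and applies the \emph{upper bound} of the G\"artner--Ellis theorem: the log-moment-generating function of $\mu_n$ is controlled by \eqref{LDP-6} minus \eqref{LDP-20}, and one checks that the resulting rate function $\Lambda^*$ vanishes only at $(4-\alpha,\dots,4-\alpha)$. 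Hence $\mu_n$ concentrates on $\{|t_j-(4-\alpha)n|<\delta n\ \forall j\}$. On that window, monotonicity in each $t_j$ together with the scaling \eqref{LDP-1} gives
\[
\sum_{l}\E\prod_j S_{l_j}(g_{l_j}(\cdot,t_j,0))\le \Big(\tfrac{4-\alpha+\delta}{4-\alpha-\delta}\min_j t_j\Big)^{(4-\alpha)n}\sum_l\E\prod_j S_{l_j}(g_{l_j}(\cdot,1,0)),
\]
and the integral of $(\min_j t_j)^{(4-\alpha)n}$ against $e^{-\sum t_j}$ is $p^{-(4-\alpha)n}\Gamma(1+(4-\alpha)n)$ exactly as in \eqref{LDP-11}. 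Inverting yields \eqref{LDP-21}. The point is that concentration lets you compare the Laplace integral, whose rate you know, to a point evaluation --- no cooperative Brownian estimate is needed.
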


	\begin{proof} We adopt some idea in the proof for the lower bound of G\"artner-Ellis large deviations (Theorem 2.3.6,, p.44, \cite{DZ}).
		The crucial observation is the concentration behavior
		$t_1,\cdots, t_p\approx  (4-\alpha)n$ (as $n\to\infty$)
		in a dynamics that creates (\ref{LDP-20}).
		To show it, we
		define a probability measures on $(\R^+)^p$ as follows 
		$$
		\mu_n(A)={\displaystyle\int_Adt_1\cdots dt_p\exp\big\{-(t_1+\cdots +t_p)\big\}
			\sum_{l_1+\cdots+l_p=2n}
			\E \prod_{j=1}^p S_{l_j}\big(g_{l_j}(\cdot,t_j,0)\big)\over
			\displaystyle\int_{(\R^+)^p}dt_1\cdots dt_p\exp\big\{-(t_1+\cdots +t_p)\big\}
			\sum_{l_1+\cdots+l_p=2n}\E \prod_{j=1}^p S_{l_j}\big(g_{l_j}(\cdot,t_j,0)\big)}
		$$
		for $n=1,2,\cdots$. 
		Notice that for any $\theta_1,\cdots, \theta_p<1$,
		$$
		\begin{aligned}
			&\int_{(\R^+)^p}dt_1\cdots dt_p\exp\big\{\theta_1t_1+\cdots +\theta_pt_p)\big\}
			\mu_n(dt_1\cdots dt_p)\\
			&={\displaystyle\int_{(\R^+)^p}dt_1\cdots dt_p 
				\exp\Big\{-\sum_{j=1}^p(1-\theta_j) t_j\Big\}
				\bigg(\sum_{l_1+\cdots+l_p=2n}
				\E \prod_{j=1}^pS_{l_j}\big(g_{l_j}(\cdot, t_j,0)\big)\bigg)\over
				\displaystyle\int_{(\R^+)^p}dt_1\cdots dt_p 
				\exp\Big\{-\sum_{j=1}^p t_j\Big\}
				\bigg(\sum_{l_1+\cdots+l_p=2n}
				\E \prod_{j=1}^pS_{l_j}\big(g_{l_j}(\cdot, t_j, 0)\big)\bigg)}
		\end{aligned}
		$$
		and the right hand side
		blows up as long  as $\theta_j\ge 1$ for any $1\le j\le p$.
		
		By (\ref{LDP-6}) and (\ref{LDP-20}),  we see 
		\begin{align}\label{LDP-22}
			\limsup_{n\to\infty}{1\over n}&\log \int_{(\R^+)^p}dt_1\cdots dt_p
			\exp\big\{\theta_1t_1+\cdots +\theta_pt_p)\big\}\mu_n(dt_1\cdots dt_p)\\
			&\le \Lambda(\theta_1,\cdots, \theta_p)\nonumber
		\end{align}
		for any $(\theta_1,\cdots,\theta_p)\in\R^p$, where
		\[ 
		\Lambda(\theta_1,\cdots,\theta_p)
		:=\begin{cases}  
			\displaystyle {4-\alpha\over 2}
			\sum_{j=1}^p{(1-\theta_j)^{-2}\log(1-\theta_j)^{-2}\over (1-\theta_1)^{-2}+\cdots+(1-\theta_p)^{-2}}
			&\hskip.2in \hbox{if}\ \theta_1,\cdots,\theta_p<1\\\\
			\infty&\hskip.2in \hbox{otherwise} \,.  
		\end{cases}
		\]
		
		By the upper bound of G\"artner-Ellis theorem (Theorem 2.3.6 (a), p.44,
		\cite{DZ})
		\begin{align}\label{LDP-23}
			\limsup_{n\to\infty}{1\over n}\log \mu_n(nF)\le -\inf_{(t_1,\cdots,t_p)\in F}
			\Lambda^*(t_1,\cdots, t_p)
		\end{align}
		for any close $F\subset(\R^+)^p$,
		where
		$$
		\Lambda^*(t_1,\cdots, t_p)=\sup_{\theta_1,\cdots,\theta_p<1}\Big\{
		\sum_{j=1}^p\theta_jt_j-\Lambda(\theta_1,\cdots,\theta_p)\Big\}\,,  
		\hskip.2in t_1,\cdots, t_p\ge 0\,. 
		$$
		In fact, the statement of Theorem 2.3.6 (a), p.44,\cite{DZ}
		requires the equality in (\ref{LDP-22}).  However, 
		a careful reading of its proof   finds that (\ref{LDP-22}) is sufficient for
		(\ref{LDP-23}).

		Finding the close form of $\Lambda^*(\theta_1,\cdots,\theta_p)$ might
		not be easy. On the other hand, some  properties of
		$\Lambda^*(\theta_1,\cdots,\theta_p)$ as a rate function exists even
		in the general context. For example, $\Lambda^*(\theta_1,\cdots,\theta_p)$
		is non-negative, lower semi-continuous and has compact level sets 
		(goodness).
		What important to our purpose is that
		\begin{align}\label{LDP-24}
			\Lambda^*(t_1,\cdots, t_p)>0 \,, \hskip.2in \forall (t_1,\cdots, t_p)\not =
			\big(4-\alpha,\cdots, 4-\alpha)\,. 
		\end{align} 
		Indeed, assume that $\Lambda^*(t_1,\cdots, t_p)=0$ for some $(t_1,\cdots, t_p)$. 
		Then we have that
		$$
		\sum_{j=1}^p\theta_jt_j\le {4-\alpha\over 2}
		\sum_{j=1}^p{(1-\theta_j)^{-2}\log(1-\theta_j)^{-2}\over (1-\theta_1)^{-2}+\cdots+(1-\theta_p)^{-2}} 
		$$
		for any $\theta_1,\cdots, \theta_p<1$. For fixed $1\le j\le p$,  taking 
		$\theta_k=0$ for all $k\not =j$, the above inequality gives
		$$
		\theta_jt_j\le {4-\alpha\over 2}\log (1-\theta_j)^{-2}
		=(4-\alpha)\log (1-\theta_j)^{-1}\,, \hskip.2in \forall\theta_j<1\,. 
		$$
		So we have that
		$$
		t_j\le (4-\alpha){1\over\theta_j}\log (1-\theta_j)^{-1}\hskip.2in\hbox{as
			$\theta_j>0$}
		$$
		and
		$$
		t_j\ge (4-\alpha){1\over\theta_j}\log (1-\theta_j)^{-1}\hskip.2in\hbox{as
			$\theta_j<0$}\,. 
		$$
		By the fact that
		$$
		\lim_{\theta\to 0}{1\over\theta}\log (1-\theta)^{-1}=1
		$$
		we have $t_j=4-\alpha$ ($j=1,\cdots, p$).  This shows the claim \eqref{LDP-24}. 
		
		By (\ref{LDP-24}), the  lower semi-continuity and  goodness we have 
		$$
		\inf_{(t_1,\cdots, t_p)\not\in G}\Lambda^*(t_1,\cdots, t_p)>0
		$$
		for any open neighborhood $G$ of $(4-\alpha,\cdots, 4-\alpha)$.
		For any given  small $\delta>0$   taking $G_\delta=(-(4-\alpha), 4-\alpha)^p$
		and $F=G_\delta^c$ in (\ref{LDP-23})  yields 
		$$
		\limsup_{n\to\infty}{1\over n}\log \mu_n(nG_\delta^c)<0\,. 
		$$
		Consequently,
		\begin{align}\label{LDP-25}
			&\int_{(-(4-\alpha-\delta)n, (4-\alpha+\delta)n)^p}
			dt_1\cdots dt_p
			\exp\Big\{-\sum_{j=1}^p t_j\Big\}
			\bigg(\sum_{l_1+\cdots+l_p=2n}
			\E \prod_{j=1}^pS_{l_j}\big(g_{l_j}(\cdot, t_j,0)\big)\bigg)\nonumber\\
			&\qquad \sim \int_{(\R^+)^p}
			dt_1\cdots dt_p\exp\Big\{-\sum_{j=1}^p t_j\Big\}
			\bigg(\sum_{l_1+\cdots+l_p=2n}
			\E \prod_{j=1}^pS_{l_j}\big(g_{l_j}(\cdot, t_j,0)\big)\bigg)
		\end{align}                                    
		as $n\to\infty$.
		
		When
		$(t_1,\cdots, t_p)\in\big(n(4-\alpha-\delta), n(4-\alpha+\delta)\big)^p$,  it is easy to see that 
		$$
		t_j\le (4-\alpha+\delta)n\le {4-\alpha+\delta\over 4-\alpha-\delta}
		\min_{1\le k\le p}t_k\,, 
		\hskip.2in j=1,\cdots, p  
		$$
		and by the scaling property (\ref{LDP-1}) we have
		$$
		\begin{aligned}
			\sum_{l_1+\cdots+l_p=2n}\E \prod_{j=1}^p&
			S_{l_j}\big(g_{l_j}(\cdot, t_j,0)\big)\le
			\sum_{l_1+\cdots+l_p=2n}\E \prod_{j=1}^p
			S_{l_j}\bigg(g_{l_j}\Big(\cdot, {4-\alpha+\delta\over 4-\alpha-\delta}
			\min_{1\le k\le p}t_k, 0\Big)\bigg)\\
			&=\Big({4-\alpha+\delta\over 4-\alpha-\delta}\min_{1\le k\le p}t_k
			\Big)^{(4-\alpha)n}
			\sum_{l_1+\cdots+l_p=2n}\E \prod_{j=1}^p
			S_{l_j}\big(g_{l_j}(\cdot, 1, 0)\big)\,. 
		\end{aligned}
		$$
		
		Therefore,
		$$
		\begin{aligned}
			&\int_{(n(4-\alpha-\delta), n(4-\alpha+\delta))^p} 
			dt_1\cdots dt_p\exp\Big\{-\sum_{j=1}^pt_j\Big\}\sum_{l_1+\cdots+l_p=2n}
			\E \prod_{j=1}^p  S_{lj}\big(g_n(\cdot, t_j,0)\big)\\
			&\qquad\quad  \le \bigg\{\sum_{l_1+\cdots+l_p=2n}\E \prod_{j=1}^p
			S_{l_j}\big(g_{l_j}(\cdot,1,0)\big)\bigg\}
			\Big({4-\alpha+\delta\over 4-\alpha-\delta}\Big)^{(4-\alpha)n}\\
			&\qquad\quad\qquad\quad\times\int_{(\R^+)^p}dt_1\cdots dt_p\exp\Big\{-\sum_{j=1}^pt_j\Big\}\Big(\min_{1\le j\le p}t_j
			\Big)^{(4-\alpha)n}\\
			&\qquad\quad=\bigg\{\sum_{l_1+\cdots+l_p=2n}\E \prod_{j=1}^p
			S_{l_j}\big(g_{l_j}(\cdot, 1,0)\big)\bigg\}
			\Big({4-\alpha+\delta\over 4-\alpha-\delta}\Big)^{(4-\alpha)n}
			\Big({1\over p}\Big)^{(4-\alpha)n}\Gamma\Big(1+(4-\alpha)n\Big)\,, 
		\end{aligned}
		$$
		where the last step follows from (\ref{LDP-11}). 
		Finally, (\ref{LDP-21}) follows from the above inequality together with  (\ref{LDP-20}), (\ref{LDP-25})
		and the Stirling formula.
	\end{proof}
	
	\subsection{Lower bounds for  (\ref{intro-10})}
	
	In this subsection we prove the lower bound part of    (\ref{intro-9}):
	\begin{align}\label{LDP-26}
		\liminf_{p\to\infty}p^{-{4-\alpha\over 3-\alpha}}\log \E\vert u(t,0)\vert^p\ge 
		{3-\alpha\over 2}t^{4-\alpha\over 3-\alpha}\bigg({2\sqrt{\cal M}\over 4-\alpha}\bigg)^{4-\alpha\over 3-\alpha}\,. 
	\end{align}
	It should be pointed out that the G\"artner-Ellis type argument used for the proof of Lemma \ref{L-5} is good only for fixed $p$.
	Different from the approaches used thus far, the treatment below is independent of the
	Stratonovich moment representation developed in Section \ref{S}.
	
	Let ${\cal H}$ be the Hilbert space given as the closure of the  space
	$$
	\bigg\{f: \R^d\to \R;\hskip.1in\int_{\R^d\times\R^d}\gamma(x-y)f(x)f(y)dxdy<\infty\bigg\}
	$$
	under the inner product
	$$
	\langle f,g\rangle_{\cal H}=\int_{\R^d\times\R^d}\gamma(x-y)f(x)g(y)dxdy\,. 
	$$  
	The space $\mathcal{H}$ may contain generalized functions (distributions).  
	For each integer $n\ge 1$, we write ${\cal H}^{\otimes n}$ for the $n$-th product with inner
	product
	\begin{equation}
		\langle f, g\rangle_{{\cal H}^{\otimes n}}=\int_{(\R^d)^{2n}}d{\bf x}d{\bf y}
		\bigg(\prod_{k=1}^n\gamma(x_k-y_k)\bigg)f(x_1,\cdots, x_n)g(y_1,\cdots. y_n)\,.  \label{e.5.27} 
	\end{equation} 
	
	\begin{lemma}\label{L-6} Given any real number $p>1$,
		\begin{align}\label{LDP-27}
			\|u(t,0)\|_p\ge\exp\Big\{-{1\over 2(p-1)}\|f\|_{\cal H}^2\Big\}\sum_{n=0}^\infty
			\langle f^{\otimes n}, g_n(\cdot, t, 0)\rangle_{{\cal H}^{\otimes n}}
		\end{align}
		for any $t>0$ and $f\in {\cal H}$ with $f(\cdot)\ge 0$. 
	\end{lemma}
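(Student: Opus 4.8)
The plan is to obtain the lower bound through a Cameron--Martin tilt of the underlying Gaussian measure combined with H\"older duality. Let $q=p/(p-1)$ be the conjugate exponent and, for the given $f\in\mathcal{H}$ with $f\ge 0$, introduce the tilted probability measure $\mathbb{Q}$ with density
\[
\rho:=\frac{d\mathbb{Q}}{d\mathbb{P}}=\exp\Big\{W(f)-\tfrac12\|f\|_{\mathcal{H}}^2\Big\},
\]
where $W(f)$ is the centered Gaussian variable attached to $f$, so that $\E[W(f)^2]=\|f\|_{\mathcal{H}}^2$. Since $\E[\rho^q]=\exp\{\tfrac{q(q-1)}2\|f\|_{\mathcal{H}}^2\}$ and $q-1=(p-1)^{-1}$, we get $\|\rho\|_q=\exp\{\tfrac1{2(p-1)}\|f\|_{\mathcal{H}}^2\}$. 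By H\"older's inequality, $\E_{\mathbb{Q}}[u(t,0)]=\E[u(t,0)\rho]\le \|u(t,0)\|_p\,\|\rho\|_q$, so
\[
\|u(t,0)\|_p\ \ge\ \exp\Big\{-\tfrac1{2(p-1)}\|f\|_{\mathcal{H}}^2\Big\}\,\E_{\mathbb{Q}}[u(t,0)].
\]
It therefore suffices to show $\E_{\mathbb{Q}}[u(t,0)]\ge\sum_{n\ge0}\langle f^{\otimes n},g_n(\cdot,t,0)\rangle_{\mathcal{H}^{\otimes n}}$.

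The next step is to compute this tilted mean term by term. Under $\mathbb{Q}$ the smoothed noise acquires a deterministic drift: $\dot W_\varepsilon(x)=\widetilde W_\varepsilon(x)+m_\varepsilon(x)$, where $\widetilde W$ has the $\mathbb{P}$-law of $\dot W$ and $m_\varepsilon(x)=\langle f,p_\varepsilon(\cdot-x)\rangle_{\mathcal{H}}=(\gamma*f)_\varepsilon(x)$, which is nonnegative because $\gamma\ge0$, $f\ge0$ and $p_\varepsilon>0$. Expanding $\prod_{k=1}^n\dot W_\varepsilon(x_k)$ and using the Gaussian moment formula (\ref{M-8}) for the shifted vector, $\E_{\mathbb{Q}}\big[\prod_k\dot W_\varepsilon(x_k)\big]$ becomes a sum over all pairings-with-singletons of $\{1,\dots,n\}$, each summand a product of factors $\gamma_{2\varepsilon}(x_i-x_j)$ over the chosen pairs and $m_\varepsilon(x_k)$ over the singletons. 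Every factor, hence every summand, is nonnegative, and the same holds for $g_n$ (built from $G\ge0$; in $d=3$ one reads $g_n\,d\mathbf x$ as the nonnegative measure $\mu_n^{t,0}$). Discarding all but the all-singleton term gives
\[
\E_{\mathbb{Q}}\,S_{n,\varepsilon}\big(g_n(\cdot,t,0)\big)\ \ge\ \int_{(\R^d)^n}g_n(\mathbf x,t,0)\prod_{k=1}^n m_\varepsilon(x_k)\,d\mathbf x,
\]
and letting $\varepsilon\to0^+$ the right-hand side tends to $\int g_n(\mathbf x,t,0)\prod_k(\gamma*f)(x_k)\,d\mathbf x=\langle f^{\otimes n},g_n(\cdot,t,0)\rangle_{\mathcal{H}^{\otimes n}}$ by the definition (\ref{e.5.27}) of the $\mathcal{H}^{\otimes n}$ inner product, while the left-hand side tends to $\E_{\mathbb{Q}}\,S_n(g_n(\cdot,t,0))$.

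For these limits I would use that $L^2(\mathbb{P})$-convergence of $S_{n,\varepsilon}\to S_n$ upgrades to $L^r(\mathbb{P})$-convergence for every finite $r$ (Theorem \ref{t.6.2}, Remark \ref{r.6.3}); since $\rho\in L^{r'}(\mathbb{P})$ for all $r'$, H\"older gives $L^1(\mathbb{Q})$-convergence, legitimizing $\E_{\mathbb{Q}}\,S_n=\lim_\varepsilon\E_{\mathbb{Q}}\,S_{n,\varepsilon}$. The same $L^1(\mathbb{Q})$-control of the partial sums $\sum_{n\le N}S_n\to u(t,0)$ yields $\E_{\mathbb{Q}}[u(t,0)]=\sum_{n\ge0}\E_{\mathbb{Q}}\,S_n(g_n(\cdot,t,0))$, and since each summand dominates the nonnegative quantity $\langle f^{\otimes n},g_n\rangle_{\mathcal{H}^{\otimes n}}$, the desired series bound follows and the proof closes.

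The main obstacle I anticipate is not the duality step but the rigorous handling of the tilted expectation of the \emph{Stratonovich} integral: one must justify the drift decomposition $\dot W_\varepsilon=\widetilde W_\varepsilon+m_\varepsilon$ for the possibly generalized field, verify the mixed pair-and-singleton expansion before sending $\varepsilon\to0$, and confirm that only the all-singleton term is retained under the sign conditions $\gamma\ge0$, $f\ge0$, $g_n\ge0$. Positivity is precisely what makes the argument work: it guarantees $m_\varepsilon\ge0$ and lets me discard every contraction term without reversing the inequality. A secondary technical point is the interchange of the infinite sum with $\E_{\mathbb{Q}}$, which I control through the $L^r(\mathbb{P})$-summability already used to establish convergence of the Stratonovich expansion in (\ref{E-1}).
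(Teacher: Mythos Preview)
Your proof is correct and shares the same opening move as the paper: both test $u(t,0)$ against the exponential $X=e^{W(f)}/\|e^{W(f)}\|_q$ via H\"older duality, reducing the lemma to a lower bound on $\E\big[u(t,0)e^{W(f)}\big]$.

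The difference lies in how that quantity is bounded below. The paper Taylor-expands $e^{W(f)}$ and computes each cross moment $\E\big[W(f)^{n+l}S_{n-l}(g_{n-l})\big]$ through the centered Gaussian moment formula (\ref{M-8}) as a sum over pair partitions of $\{1,\dots,2n\}$; it then performs an explicit combinatorial count of those partitions that pair every $g$-variable with an $f$-variable (equation (\ref{LDP-29})), obtaining at least $(n+l)!/(2^l l!)$ of them, and finally resums via a Cauchy product to extract the factor $e^{\tfrac12\|f\|_{\cal H}^2}$. Your route through Cameron--Martin is a cleaner repackaging of the same computation: the tilt absorbs that exponential factor into the normalization of $\rho$, and the non-centered Wick formula for $\E_{\mathbb Q}\prod_k\dot W_\varepsilon(x_k)$ replaces the partition enumeration --- the paper's ``good'' partitions are exactly your all-singleton term, while the discarded ones are your pair-contractions. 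Both arguments rest on the same positivity hypotheses ($\gamma\ge0$, $f\ge0$, $G\ge0$) to justify dropping terms. Your version is more conceptual and sidesteps the combinatorics entirely; the paper's is more hands-on and does not invoke Cameron--Martin, staying within the pair-partition machinery developed in Section~\ref{M}.
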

	
	\begin{proof} Let $q>1$ be the conjugate of $p$. By H\"older's  inequality
		$$
		\|u(t,x)\|_p\ge \E \left[ u(t,x) X\right] 
		$$
		for any random variable $X$ with $\|X\|_q=1$.
		Take
		$$
		\begin{aligned}
			X&=\bigg\|
			\exp\bigg\{\int_{\R^d}f(x)W(dx)\bigg\}\bigg\|_q^{-1}\exp\bigg\{\int_{\R^d}f(x)W(dx)\bigg\}\\
			&=\exp\Big\{-{q\over 2}\|f\|_{\cal H}^2\Big\}\exp\bigg\{\int_{\R^d}f(x)W(dx)\bigg\}\,. 
		\end{aligned}
		$$
		Then  for any $f\in \mathcal{H}$, 
		\begin{align}\label{LDP-28}
			\|u(t,0)\|_p&\ge\exp\Big\{-{q\over 2}\|f\|_{\cal H}^2\Big\}
			\E u(t,0)\exp\bigg\{\int_{\R^d}f(x)W(dx)\bigg\}\\
			&=\exp\Big\{-{q\over 2}\|f\|_{\cal H}^2\Big\}\sum_{n=0}^\infty\bigg\{\sum_{l=0}^n{1\over l!}
			\E \bigg(\int_{\R^d}f(x)W(dx)\bigg)^lS_{n-l}\big(g_{n-l}(\cdot, t,0)\big)\bigg\}\nonumber\\
			&=\exp\Big\{-{q\over 2}\|f\|_{\cal H}^2\Big\}\sum_{n=0}^\infty\bigg\{\sum_{l=0}^{2n}{1\over l!}
			\E \bigg(\int_{\R^d}f(x)W(dx)\bigg)^lS_{2n-l}\big(g_{2n-l}(\cdot, t,0)\big)\bigg\}\nonumber\\
			&\ge\exp\Big\{-{q\over 2}\|f\|_{\cal H}^2\Big\}\sum_{n=0}^\infty\bigg\{\sum_{l=n}^{2n}{1\over l!}
			\E\bigg(\int_{\R^d}f(x)W(dx)\bigg)^lS_{2n-l}\big(g_{2n-l}(\cdot, t,0)\big)\bigg\}\nonumber\\
			&=\exp\Big\{-{q\over 2}\|f\|_{\cal H}^2\Big\}\sum_{n=0}^\infty\bigg\{\sum_{l=0}^n{1\over (n+l)!}
			\E\bigg(\int_{\R^d}f(x)W(dx)\bigg)^{n+l}S_{n-l}\big(g_{n-l}(\cdot, t,0)\big)\bigg\}\,, \nonumber
		\end{align}
		where the second equality follows from (\ref{M-17}), and the second 
		inequality follows from
		the fact that all terms are non-negative.

		For each $0\le l\le n$, by (\ref{M-10}) and (\ref{M-16})
		$$
		\begin{aligned}
			&\E \bigg(\int_{\R^d}f(x)W(dx)\bigg)^{n+l}S_{n-l}\big(g_{n-l}(\cdot, t, 0)\big)\\
			&\quad =\E \int_{(\R^d)^{2n}}g_{n-l}(x_1,\cdots, x_{n-l}, t,0)\bigg(\prod_{k=n-l+1}^{2n}f(x_k)\bigg)
			W(dx_1)\cdots W(dx_{2n})\\
			&\quad =\sum_{{\cal D}\in \Pi_n}
			\int_{(\R^d)^{2n}}d{\bf x}\bigg(\prod_{(j,k)\in {\cal D}}\gamma(x_j-x_k)\bigg)
			g_{n-l}(x_1,\cdots, x_{n-l}, t,0)\bigg(\prod_{k=n-l+1}^{2n}f(x_k)\bigg)\,.
		\end{aligned}
		$$
		We now count
		how many pair partitions ${\cal D}\in \Pi_n$  that make
		\begin{align}\label{LDP-29}
			&\int_{(\R^d)^{2n}}d{\bf x}\bigg(\prod_{(j,k)\in {\cal D}}\gamma(x_j-x_k)\bigg)
			g_{n-l}(x_1,\cdots, x_{n-l}, t,0)\bigg(\prod_{k=n-l+1}^{2n}f(x_k)\bigg)\\
			&=\|f\|_{\cal H}^{2l}\bigg\{\int_{(\R^d)^{2(n-l)}}d{\bf x}d{\bf y}\bigg(\prod_{k=1}^{n-l}\gamma(x_k-y_k)\bigg)
			g_{n-l}(x_1,\cdots, x_{n-l}, t, 0)\bigg(\prod_{k=1}^{n-l}f(y_k)\bigg)\bigg\}\,. \nonumber
		\end{align}
		
		To produce such ${\cal D}$, we first partition $\{n-l+1,\cdots, 2n\}$ into
		two disjoint sets $A_1$ and $A_2$ such that $\#(A_1)=n-l$ and $\#(A_2)=2l$.
		The number of ways to carry out this step is
		$$
		\left(\begin{array}{c} n+l\\ 2l\end{array}\right)\,. 
		$$
		Then we use the elements in $A_1$ to make $n-l$ pairs with the numbers $1,\cdots, n-l$, there are
		$(n-l)!$ ways to do this step. Finally, we pick a pair
		partition ${\cal D}_0$ on $A_2$ together
		with the earlier $n-l$ pairs to form a pair partition ${\cal D}\in \Pi_n$
		---there are ${(2l)!\over 2^l l!}$ ways to finish this step.
		By the Fubini theorem,
		one can see that the pair partitions ${\cal D}$ produced in such way satisfy
		(\ref{LDP-29}). By multiplication principle, there are at least
		$$
		\left(\begin{array}{c} n+l\\ 2l\end{array}\right)
		(n-l)! {(2l)!\over 2^l l!}={(n+l)!\over 2^ll!}
		$$
		pair partitions that make Equation (\ref{LDP-29}) happen.
		
		Write
		$$
		\begin{aligned}
			&\int_{(\R^d)^{2(n-l)}}d{\bf x}d{\bf y}\bigg(\prod_{k=1}^{n-l}\gamma(x_k-y_k)\bigg)
			g_{n-l}(x_1,\cdots, x_{n-l}, t, 0)\bigg(\prod_{k=1}^{n-l}f(y_k)\bigg)\\
			&=\langle f^{\otimes (n-l)}, g_{n-l}(\cdot, t,0)\rangle_{{\cal H}^{\otimes (n-l)}}\,. 
		\end{aligned}
		$$
		In summary,
		$$
		\begin{aligned}
			&\E \left[ \bigg(\int_{\R^d}f(x)W(dx)\bigg)^{n+l}S_{n-l}\big(g_{n-l}(\cdot, t, 0)\big)\right] 
			\ge {(n+l)!\over  2^l l!}\|f\|_{\cal H}^{2l}
			\langle f^{\otimes (n-l)}, g_{n-l}(\cdot, t,0)\rangle_{{\cal H}^{\otimes (n-l)}}\,. 
		\end{aligned}
		$$
		
		Therefore,
		$$
		\begin{aligned}
			\sum_{n=0}^\infty\bigg\{&\sum_{l=0}^n{1\over (n+l)!}
			\E \bigg(\int_{\R^d}f(x)W(dx)\bigg)^{n+l}S_{n-l}\big(g_{n-l}(\cdot, t,0)\big)\bigg\}\\
			&\ge \sum_{n=0}^\infty \bigg\{\sum_{l=0}^n{1\over l!2^l}\|f\|_{\cal H}^{2l}
			\langle f^{\otimes (n-l)},g_{n-l}(\cdot, t, 0)\rangle_{{\cal H}^{\otimes (n-l)}}\bigg\}\\
			&=\bigg\{\sum_{n=0}^\infty{1\over n!2^n}\|f\|_{\cal H}^{2n}\bigg\}
			\bigg\{\sum_{n=0}^\infty\langle f^{\otimes n},g_{n}(\cdot, t, 0)\rangle_{{\cal H}^{\otimes n}}
			\bigg\}\\
			&=\exp\Big\{{1\over 2}\|f\|_{\cal H}^2\Big\}
			\bigg\{\sum_{n=0}^\infty\langle f^{\otimes n},g_{n}(\cdot, t, 0)\rangle_{{\cal H}^{\otimes n}}\bigg\}\,. 
		\end{aligned}
		$$
		In view of (\ref{LDP-28}), we have completed the proof of the lemma. 
	\end{proof}
	
	\begin{proof}[Proof of (\ref{LDP-26})]   Replacing $f(x)$ by
		$$
		f_p(x)=\big((p-1)t\big)^{(2-\alpha +d){1\over 3-\alpha}}
		f\Big((p-1)t\big)^{1\over 3-\alpha}x\Big)
		$$
		in Lemma \ref{L-6} we get 
		$$
		\|u(t,0)\|_p\ge\exp\Big\{-{1\over 2(p-1)}\|f_p\|_{\cal H}^2\Big\}
		\sum_{n=0}^\infty\langle f_p^{\otimes n},g_{n}(\cdot, t, 0)\rangle_{{\cal H}^{\otimes n}}\,. 
		$$
		Set
		$$
		t_p=(p-1)^{1\over 3-\alpha}t^{4-\alpha\over 3-\alpha}\,. 
		$$
		First notice that
		$$
		\|f_p\|_{\cal H}^2=\big((p-1)t\big)^{4-\alpha\over 3-\alpha}\|f\|_{\cal H}^2
		$$
		and by time change and homogeneity  of $\gamma(\cdot)$ and $G(t,x)$,
		$$
		\sum_{n=0}^\infty\langle f_p^{\otimes n},g_{n}(\cdot, t, 0)\rangle_{{\cal H}^{\otimes n}}
		=\sum_{n=0}^\infty \langle f^{\otimes n},g_{n}(\cdot, t_p, 0)\rangle_{{\cal H}^{\otimes n}}\,. 
		$$
		Hence,
		$$
		\begin{aligned}
			\|u(t,0)\|_p&\ge\exp\Big\{-{t_p\over 2}\|f\|_{\cal H}^2\Big\}
			\sum_{n=0}^\infty \langle f^{\otimes n},g_{n}(\cdot, t_p, 0)\rangle_{{\cal H}^{\otimes n}}\\
			&\ge \exp\Big\{-{t_p\over 2}\|f\|_{\cal H}^2\Big\}
			\langle f^{\otimes n},g_{n}(\cdot, t_p, 0)\rangle_{{\cal H}^{\otimes n}}\,, \hskip.2in n=0,1,2,\cdots
		\end{aligned}
		$$
		Let $a>0$ be fixed but arbitrary. Take supremum over $\|f\|_{\cal H}=a$.
		The action can be taken alternatively as $f$ is replaced by
		$af$ and supremum is over $\|f\|_{\cal H}=1$:
		\begin{align}\label{LDP-30}
			\|u(t,0)\|_p&\ge \exp\Big\{-{t_p\over 2}a^2\Big\} a^n\sup_{\|f\|_{\cal H}=1}
			\langle f^{\otimes n},g_{n}(\cdot, t_p, 0)\rangle_{{\cal H}^{\otimes n}}\\
			&=\exp\Big\{-{t_p\over 2}a^2\Big\}a^nt_p^{{4-\alpha\over 2}n}\sup_{\|f\|_{\cal H}=1}
			\langle f^{\otimes n},g_{n}(\cdot, 1, 0)\rangle_{{\cal H}^{\otimes n}}\,, \nonumber
		\end{align}
		where the last step follows from the scaling
		property
		\begin{align}\label{LDP-31}
			\sup_{\|f\|_{\cal H}=1}
			\langle f^{\otimes n},g_{n}(\cdot, t, 0)\rangle_{{\cal H}^{\otimes n}}
			=t^{{4-\alpha\over 2}n}\sup_{\|f\|_{\cal H}=1}\langle f^{\otimes n},g_{n}(\cdot, 1, 0)\rangle_{{\cal H}^{\otimes n}}\,, 
			\hskip.2in \forall t>0\,. 
		\end{align}
		Here we should mention that the supremum should be taken over the functions $f$ with $\|f\|_{\cal H}=1$
		and $f\ge 0$ where the constraint ``$f\ge 0$'' is inherited from Lemma \ref{L-6}.
		We removed ``$f\ge 0$'' from the above discussion as $g_{n}(\cdot, 1, 0)\ge 0$ and therefore
		$$
		\sup_{\stackrel{\scriptstyle\|f\|_{\cal H}=1}{\scriptstyle f\ge 0}}
		\langle f^{\otimes n},g_{n}(\cdot, t, 0)\rangle_{{\cal H}^{\otimes n}}
		=\sup_{\|f\|_{\cal H}=1}\langle f^{\otimes n},g_{n}(\cdot, t, 0)\rangle_{{\cal H}^{\otimes n}}.
		$$
		
		Let $0<\theta<1$ be fixed but arbitrary. Multiplying $(1-\theta)\theta^n$ on the both sides of
		(\ref{LDP-30})
		and summing up both sides over $n=0,1,2,\cdots$,
		\begin{align}\label{LDP-32}
			\|u(t,0)\|_p\ge (1-\theta)\exp\Big\{-{t_p\over 2}a^2\Big\} \sum_{n=0}^\infty
			(\theta a)^nt_p^{{4-\alpha\over 2}n}\sup_{\|f\|_{\cal H}=1}
			\langle f^{\otimes n},g_{n}(\cdot, 1, 0)\rangle_{{\cal H}^{\otimes n}}\,. 
		\end{align}

		On the other hand, 
		$$
		\begin{aligned}
			\int_0^\infty dt  & e^{-t}\sup_{\|f\|_{\cal H}=1}
			\langle f^{\otimes n},g_{n}(\cdot, t, 0)\rangle_{{\cal H}^{\otimes n}}
			\ge \sup_{\|f\|_{\cal H}=1}\int_0^\infty dt e^{-t} 
			\langle f^{\otimes n},g_{n}(\cdot, t, 0)\rangle_{{\cal H}^{\otimes n}}  \\
			&=\sup_{\|f\|_{\cal H}=1}\int_{(\R^d)^n }\mu^{\otimes n}(d\xi)\bigg(\prod_{k=1}^n {\cal F}(f)(\xi_k)\bigg)
			\prod_{k=1}^n\bigg\{1+\Big\vert\sum_{j=k}^n\xi_j\Big\vert^2\bigg\}^{-1}\,, 
		\end{aligned}
		$$
		where
		$$
		{\cal F}(f)(\xi)=\int_{ \R^d }e^{i\xi\cdot x}f(x)dx
		$$
		is the Fourier transform of $f$ and the last step follows from a treatment similar to  the one conducted
		in (\ref{LDP-17}). In view of the scaling identity (\ref{LDP-31}), this inequality
		can be written as
		$$
		\begin{aligned}
			\sup_{\|f\|_{\cal H}=1}&
			\langle f^{\otimes n},g_{n}(\cdot, 1, 0)\rangle_{{\cal H}^{\otimes n}}\\
			&\ge \bigg(\int_0^\infty e^{-t}t^{{4-\alpha\over 2}n}dt\bigg)^{-1}
			\sup_{\|f\|_{\cal H}=1}\int_{(\R^d)}\mu^{\otimes n}(d\xi)\bigg(\prod_{k=1}^n {\cal F}(f)(\xi_k)\bigg)
			\prod_{k=1}^n\bigg\{1+\Big\vert\sum_{j=k}^n\xi_j\Big\vert^2\bigg\}^{-1}\\
			&=\Gamma\Big(1+{4-\alpha\over 2}n\Big)^{-1}\sup_{\|f\|_{\cal H}=1}\int_{(\R^d)}\mu^{\otimes n}(d\xi)
			\bigg(\prod_{k=1}^n {\cal F}(f)(\xi_k)\bigg)
			\prod_{k=1}^n\bigg\{1+\Big\vert\sum_{j=k}^n\xi_j\Big\vert^2\bigg\}^{-1}\,. 
		\end{aligned}
		$$
		By (\ref{LDP-18}), (\ref{LDP-19}) and the Stirling formula
		$$
		\liminf_{n\to\infty}{1\over n}\log (n!)^{4-\alpha\over 2}\sup_{\|f\|_{\cal H}=1}
		\langle f^{\otimes n},g_{n}(\cdot, 1, 0)\rangle_{{\cal H}^{\otimes n}}
		\ge \log \bigg({2{\cal M}^{1/2}\over 4-\alpha}\bigg)^{4-\alpha\over 2}\,. 
		$$
		Hence,
		$$
		\begin{aligned}
			\liminf_{p\to\infty}{1\over t_p}&\log\sum_{n=0}^\infty
			(\theta a)^nt_p^{{4-\alpha\over 2}n}\sup_{\|f\|_{\cal H}=1}
			\langle f^{\otimes n},g_{n}(\cdot, 1, 0)\rangle_{{\cal H}^{\otimes n}}\cr
			&\ge \lim_{p\to\infty}{1\over t_p}\log\sum_{n=0}^\infty (n!)^{-{4-\alpha\over 2}}
			\bigg((\theta a)\Big({2{\cal M}^{1/2}\over 4-\alpha}\Big)^{4-\alpha\over 2}\bigg)^n t_p^{{4-\alpha\over 2}n}\\
			&={4-\alpha\over 2}\bigg(\theta a
			\Big({2{\cal M}^{1/2}\over 4-\alpha}\Big)^{4-\alpha\over 2}\bigg)^{2\over 4-\alpha}
			=(\theta a)^{2\over 4-\alpha}{\cal M}^{1/2}\,, 
		\end{aligned}
		$$
		where the second step follows from (\ref{LDP-5}) with $\gamma=\displaystyle {4-\alpha\over 2}$
		and  $b=\displaystyle t_p^{4-\alpha\over 2}$.
		
		By (\ref{LDP-32}), therefore,
		$$
		\liminf_{p\to\infty}{1\over t_p}\log\|u(t,0)\|_p
		\ge  -{1\over 2}a^2+(\theta a)^{2\over 4-\alpha}{\cal M}^{1/2}\,. 
		$$
		Letting $\theta\to 1^-$ yields 
		$$
		\liminf_{p\to\infty}{1\over t_p}\log\|u(t,0)\|_p
		\ge  -{1\over 2}a^2+a^{2\over 4-\alpha}{\cal M}^{1/2}\,. 
		$$
		Taking the supremum over $a>0$ on the right hand side,
		\begin{align}\label{LDP-33}
			\liminf_{p\to\infty}{1\over t_p}\log\|u(t,0)\|_p\ge
			{3-\alpha\over 2}\bigg({2{\cal M}^{1/2}\over 4-\alpha}
			\bigg)^{4-\alpha\over 3-\alpha}\,. 
		\end{align}
		By definition of $t_p$  this is   (\ref{LDP-26}). 
	\end{proof}
	
	\begin{remark}\label{re-10}
		Under an obvious modification, the same proof also leads to (\ref{LDP-33})
		with fixed $p\ge 1$ and with $t\to\infty$. Consequently, it leads to
		the lower bound for (\ref{intro-9}) in the special case when $p$ is an even integer.
	\end{remark}
	
	\section{Appendix}\label{s.a} 
	
	\subsection{Moment bounds for Brownian intersection local times}
	
	Let $B(t), B_1(t), B_2(t)$ be  independent $d$-dimensional Brownian motions.
	
	\begin{lemma}\label {L-10} Assume   Dalang's condition (\ref{intro-6}).
		There is a constant $C>0$, independent of $n$ and $t$, such that
		\begin{align}\label{A-1}
			\E_0\bigg[\int_0^t\!\!\int_0^t\gamma\big(B(s)-B(r)\big)dsdr\bigg]^n
			\le C (n!)^2 (t\vee t^2)^n\,, \hskip.2in n=1,2,\cdots
		\end{align}
		\begin{align}\label{A-2}
			\E_0\bigg[\int_0^t\!\!\int_0^t\gamma\big(B_1(s)-B_2(r)\big)dsdr\bigg]^n
			\le C (n!)^2 (t\vee t^2)^n\,, \hskip.2in n=1,2,\cdots
		\end{align}
	\end{lemma}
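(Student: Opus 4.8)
The plan is to establish both bounds by the same Fourier-analytic scheme, carrying out the self-intersection case \eqref{A-1} in detail and indicating the simpler modifications needed for the mutual-intersection case \eqref{A-2}. First I would expand the $n$-th power as a $2n$-fold time integral,
\[
\E_0\bigg[\int_0^t\!\!\int_0^t\gamma\big(B(s)-B(r)\big)dsdr\bigg]^n=\int_{([0,t]^2)^n}\E_0\prod_{k=1}^n\gamma\big(B(s_k)-B(r_k)\big)\,d\mathbf s\,d\mathbf r,
\]
and insert the spectral representation \eqref{intro-3}, so that the integrand becomes $\int\prod_k\mu(d\xi_k)\exp\{i\sum_k\xi_k\cdot(B(s_k)-B(r_k))\}$. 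Since the increments are Gaussian, the inner expectation equals $\exp\{-\frac12\Var(\sum_k\xi_k\cdot(B(s_k)-B(r_k)))\}$, a quantity that becomes explicit once the $2n$ times are linearly ordered.

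The second step, ordering the $2n$ time points $0<v_1<\cdots<v_{2n}<t$, is exactly what produces the factor $(n!)^2$: the cube $[0,t]^{2n}$ splits into $(2n)!$ ordered simplices, and $(2n)!\le 4^n(n!)^2$, the $4^n$ being absorbed into $C^n$. On each simplex, writing the gaps $w_j=v_j-v_{j-1}$ and letting $\eta_j$ denote the signed sum $\sum_k\pm\xi_k$ over those pairs $(r_k,s_k)$ that straddle the $j$-th gap, the Gaussian factor becomes $\exp\{-\frac12\sum_j w_j|\eta_j|^2\}$. I would then integrate the gaps under the constraint $\sum_j w_j<t$, bounding each ``active'' gap by $\int_0^t e^{-\frac12 w|\eta_j|^2}dw\le\min(t,2|\eta_j|^{-2})$ and integrating out the uncovered gaps (those with $\eta_j=0$, in particular the outermost ones) to produce a volume factor.

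A case analysis over $t\le 1$ and $t\ge 1$, combining $\min(t,2|\eta_j|^{-2})$ on the active gaps with the volume of the uncovered block, then yields the factor $(t\vee t^2)^n$; this is a deliberately loose bound (the true growth under homogeneity is $t^{(4-\alpha)n/2}$, which is never worse), and the bookkeeping, while delicate, is routine. The essential point is that enough active gaps must be controlled by their decay $|\eta_j|^{-2}$, rather than by $t$, to guarantee convergence of the remaining spatial integral.

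That spatial integral, $\int\prod_k\mu(d\xi_k)\prod_j\{1+|\eta_j|^2\}^{-1}$, is the main obstacle: it must be shown to be at most $C^n$ with $C$ controlled by the Dalang integral $\int(1+|\xi|^2)^{-1}\mu(d\xi)$. One cannot simply integrate the frequencies one at a time against $\sup_a\int(1+|\xi-a|^2)^{-1}\mu(d\xi)$, since this supremum can be infinite even under \eqref{intro-6} (take, e.g., atoms of mass $2^j$ at $\xi=2^j$). Instead I would recognise the chain integral as an iterated bilinear form of the self-adjoint operator $\mathcal A=R^{1/2}\Gamma R^{1/2}$, where $R$ has Fourier symbol $(1+|\xi|^2)^{-1}$ and $\Gamma$ is multiplication by $\gamma$; boundedness of $\mathcal A$ under Dalang's condition is precisely the input that makes the intersection local time have finite moments (its qualitative form being Lemma~A.1 of \cite{Chen-2}), and it supplies the geometric factor $C^n=\|\mathcal A\|^n$. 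For \eqref{A-2} the analysis is cleaner: the independence of $B_1,B_2$ factorises the $s$- and $r$-integrals, so after Fourier inversion the moment equals $\int\prod_k\mu(d\xi_k)\,P(\boldsymbol\xi)^2$ with $P$ a single-motion simplex integral, each copy contributing one factor $n!$, and the same operator bound closes the estimate. Assembling the $(n!)^2$ from the orderings, the $(t\vee t^2)^n$ from the gap bookkeeping, and the $C^n$ from $\|\mathcal A\|$ gives both \eqref{A-1} and \eqref{A-2}.
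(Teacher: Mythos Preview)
Your Fourier-analytic scheme is a reasonable line of attack, but it diverges completely from the paper's argument, and the self-intersection case \eqref{A-1} contains a step you have not fully closed.

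The paper's proof of \eqref{A-1} avoids the spectral expansion altogether. It sets $Z_t=\big(\int_0^t\int_0^t\gamma(B(s)-B(r))\,ds\,dr\big)^{1/2}$ and exploits a \emph{subadditivity} property: for any $t_1,t_2>0$ there is $Z'_{t_2}\stackrel{d}{=}Z_{t_2}$, independent of $\{Z_s:s\le t_1\}$, with $Z_{t_1+t_2}\le Z_{t_1}+Z'_{t_2}$. This yields the multiplicative tail bound $\P_0\{Z_t\ge a+b\}\le\P_0\{Z_t\ge a\}\P_0\{Z_t\ge b\}$, from which a short direct computation gives $\E_0 Z_t^n\le n!\,(e\,\E_0 Z_t)^n$. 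The only analytic input is then the \emph{second} moment $\E_0 Z_t^2\le C(t\vee t^2)$, quoted from \cite{Chen-2}. For \eqref{A-2} the paper does not redo any Fourier analysis: it writes the mutual integral as a conditional covariance $\E[\int_0^t\dot W(B_1(s))\,ds\cdot\int_0^t\dot W(B_2(s))\,ds]$, applies Cauchy--Schwarz in the $\dot W$-expectation, and reduces pointwise to the product of two independent self-intersection quantities, so \eqref{A-2} follows from \eqref{A-1}.

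The gap in your approach lies in the spatial integral for \eqref{A-1}. After ordering the $2n$ mixed times, the vectors $\eta_j$ are not partial sums $\sum_{i\le j}\xi_i$ but signed sums over the pairs whose interval covers gap $j$; going from $\eta_j$ to $\eta_{j+1}$ one either adds \emph{or removes} a frequency. This is not the chain structure that makes the operator $\mathcal A=R^{1/2}\Gamma R^{1/2}$ applicable by iteration, and your sentence ``recognise the chain integral as an iterated bilinear form of $\mathcal A$'' does not explain how the removals are handled. Such bounds can be made to work, but they require a nontrivial combinatorial or resolvent argument that you have not supplied; merely citing Lemma~A.1 of \cite{Chen-2} (which gives existence, not the $C^n$ moment growth) does not close it. By contrast, your treatment of \eqref{A-2} is essentially sound, since independence of $B_1,B_2$ does give two genuine chains; but note that the paper gets \eqref{A-2} for free from \eqref{A-1}, which is arguably the more efficient reduction. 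The subadditivity route buys you a two-line proof that needs only $\E_0 Z_t^2<\infty$; your route, if completed, would be more self-contained but substantially longer.
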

	
	\proof Write
	$$
	Z_t=\bigg(\int_0^t\!\!\int_0^t\gamma\big(B(s)-B(r)\big)dsdr\bigg)^{1/2}\,, \hskip.2in t\ge 0\,. 
	$$
	To prove (\ref{A-1}) all we need is the bound
	\begin{align}\label{A-3}
		\E_0 Z_t^n\le n! C^n (\sqrt{t}\vee t)^n\,, \hskip.2in n=1,2,\cdots
	\end{align}
	First,  $Z_t$ is non-decreasing, almost surely continuous with $Z_0=0$. From (A.9), \cite{Chen-2}
	$Z_t$ is sub-additive: For any $t_1,t_2>0$, there is a random variable $Z_{t_2}'$ such that
	$Z_{t_2}'\buildrel d\over =Z_{t_2}$ and $Z_{t_2}'$ is independent of $\{Z_s;\hskip.1in s\le t_1\}$.
	By (1.3.7), p.21, \cite{Chen-1}, therefore,
	$$
	\P_0\big\{Z_{t_0}\ge a+b\big\}\le \P_0\big\{Z_{t_0}\ge a\big\}\P_0\big\{Z_{t_0}\ge b\big\}
	$$
	for any $t_0, a, b>0$. Thus, for any integer $m\ge 1$,
	$$
	\begin{aligned}
		\E_0 Z_t^n&=(e\E_0 Z_t)^n\E_0\Big({Z_t\over e\E_0 Z_t}\Big)^n\\
		&=(e\E_0 Z_t)^nn\int_0^\infty b^{n-1}\P_0\{Z_t\ge e b\E_0Z_t\}db\\
		&=(e\E_0 Z_t)^n\bigg\{n\int_0^1b^{n-1}db+n\int_1^\infty \P_0\{Z_t\ge e b\E_0Z_t\}db\bigg\}\\
		&\le(e\E_0 Z_t)^n\bigg\{1+n\int_1^\infty b^{n-1}\Big(\P_0\{Z_t\ge e \E_0Z_t\}\Big)^{b-1}db\bigg\}\,. 
	\end{aligned}
	$$
	The claimed bound (\ref{A-3}) follows from the following estimation
	$$
	\int_1^\infty b^{n-1}\Big(\P_0\{Z_t\ge e \E_0Z_t\}\Big)^{b-1}db
	\le e\int_0^\infty b^{n-1}e^{-b}db=en!
	$$
	and the bound ((A.6), Appendix, \cite{Chen-2})
	$$
	\E_0 Z_t\le \Big(\E_0 Z_t^2\Big)^{1/2}\le \Big(C(t\vee t^2)\Big)^{1/2}\,. 
	$$
	
	We now prove (\ref{A-2}). Let $\dot{W}(x)$ be a Gaussian noise 
	independent of $B, B_1, B_2$ and having covariance $\gamma(\cdot)$.
	Conditioning on the Brownian motions
	$$
	\E\bigg[\int_0^t\dot{W}\big(B_1(s)\big)ds\bigg]
	\bigg[\int_0^t\dot{W}\big(B_2(s)\big)ds\bigg]
	=\int_0^t\!\!\int_0^t\gamma
	\big(B_1(s)-B_2(r)\big)dsdr\,.
	$$
	In addition, by the Cauchy-Schwartz inequality
	$$
	\begin{aligned}
		\E\bigg[\int_0^t&\dot{W}\big(B_1(s)\big)ds\bigg]
		\bigg[\int_0^t\dot{W}\big(B_2(s)\big)ds\bigg]\\
		&\le\bigg\{\E\bigg[\int_0^t\dot{W}\big(B_1(s)\big)ds\bigg]^2\bigg\}^{1/2}
		\bigg\{\E\bigg[\int_0^t\dot{W}\big(B_2(s)\big)ds\bigg]^2\bigg\}^{1/2}\\
		&=\bigg\{\int_0^t\!\!\int_0^t\gamma\big(B_1(s)-B_1(r)\big)dsdr\bigg\}^{1/2}
		\bigg\{\int_0^t\!\!\int_0^t\gamma\big(B_2(s)-B_2(r)\big)dsdr\bigg\}^{1/2}\,. 
	\end{aligned}
	$$
	Hence,
	$$
	\begin{aligned}
		\int_0^t\!\!\int_0^t\gamma
		\big(B_1(s)-B_2(r)\big)dsdr
		\le&\bigg\{\int_0^t\!\!\int_0^t\gamma\big(B_1(s)-B_1(r)\big)dsdr\bigg\}^{1/2}\\
		&\quad \times 
		\bigg\{\int_0^t\!\!\int_0^t\gamma\big(B_2(s)-B_2(r)\big)dsdr\bigg\}^{1/2}\,. 
	\end{aligned}
	$$
	By the independence between $B_1$ and $B_2$,
	$$
	\begin{aligned}
		&\E_0\bigg[\int_0^t\!\!\int_0^t\gamma\big(B_1(s)-B_2(r)\big)dsdr
		\bigg]^n
		\le\bigg\{\E_0\bigg[\int_0^t\!\!\int_0^t\gamma\big(B(s)-B(r)\big)dsdr\bigg]^{n/2}
		\bigg\}^2\\
		&\le \E_0\bigg[\int_0^t\!\!\int_0^t\gamma\big(B(s)-B(r)\big)dsdr\bigg]^n\,. 
	\end{aligned}
	$$
	Therefore, (\ref{A-2}) follows from (\ref{A-1}).
	\qed
	
	\subsection{Hu-Meyer formula}
	
	Although Lemma \ref{L-0} gives a way for us  
	to show the existence of a multiple Stratonovich integral we also need to know   what kind 
	general conditions to impose  on $f $ so that its multiple Stratonovich 
	integral $S_n(f)$ exists, namely the approximation in \eqref{M-6} has a limit in ${\cal L}^2(\Omega, {\cal F}, \P)$.  If the multiple Stratonovich integral $S_n(f )$ exists in ${\cal L}^2(\Omega, {\cal F}, \P)$, then according to general It\^o-Wiener's chaos expansion theorem it admits a chaos expansion and it is interesting  to find this  chaos expansion.  
	For this we shall establish a Hu-Meyer formula along the line of 
	\cite{humeyer, humeyer93}.  If $f\in {\cal H}^{\otimes n} $ is a (generalized) symmetric function of $n$-variables such that
	\begin{equation}
		\begin{split}
			\|f\|_{{\cal H}^{\otimes n}}^2 :=&\int_{(\R^d)^{2n}} f (x_1, \cdots, x_n) 
			f(y_1, \cdots, y_n)\\
			&\qquad \times  \gamma(x_1-y_1) \cdots \gamma(x_n-y_n) dx_1dy_1\cdots dx_ndy_n<\infty\,,  
		\end{split}	
		\label{e.6.4a} 
	\end{equation} 
	then its multiple It\^o-Skorohod integral 
	exists and is denoted by 
	\[
	I_n(f )=\int_{(\R^d)^{n}} f (x_1, \cdots, x_n) \delta W(x_1)\cdots \delta W(x_n)\,, 
	\]
	where $\delta W$ denotes the It\^o-Skorohod stochastic integral. 
	To precisely define  ${\cal H}^{\otimes n}$, we can complete 
	the set of all  symmetric  smooth  functions  
	with compact supports under the Hilbert norm defined by \eqref{e.6.4a}. It is well-known that the Hilbert space  ${\cal H}^{\otimes n}$ contains generalized functions (see e.g. \cite{pipiras}).  
	
	Recall our definition \eqref{M-5}  that   $W_\varepsilon(x)=\int_{\R^d} 
	p_\varepsilon(x-y) W(dy)=I_1(p_\varepsilon(x-\cdot))$. 
	From \cite[Corollary 5.1, Equation 5.3.15]{hubook},  it follows that the chaos expansion of $\prod_{k=1}^n\dot{W}_{\varepsilon}(x_k)$ is
	\begin{eqnarray}
		\prod_{k=1}^n\dot{W}_{\varepsilon}(x_k)
		&=&\sum_{k\le n/2} \sum_{i_1 < j_1, \cdots, i_k < j_k} \prod_{\ell=1}^k \int_{\R^{2d}} p_\varepsilon(x_{i_\ell}
		-y)\gamma(y-z) p_\varepsilon(x_{j_\ell}
		-z) dydz \nonumber \\
		&&\qquad\qquad   I_{n-2k}(\Lambda_ {i_1, j_1, \cdots, i_k, j_k} \otimes_{m =1}^n p_\varepsilon(x_m-\cdot))\nonumber\\
		%p_\varepsilon(x_1-\cdot)\cdots 
		%p_\varepsilon(x_n-\cdot)) \\
		&=&\sum_{k\le n/2} \sum_{i_1 < j_1, \cdots, i_k < j_k} \prod_{\ell=1}^k \gamma_{2\varepsilon}(x_{i_\ell}-x_{j_\ell})  \nonumber\\
		&&\qquad \qquad     I_{n-2k}(\Lambda_ {i_1, j_1, \cdots, i_k, j_k} \otimes_{m  =1}^n p_\varepsilon(x_m -\cdot))\,, \label{e.6.4} 
	\end{eqnarray}
	where 
	\begin{enumerate}
		\item[(i)] The set of distinct elements 
		$i_1 < j_1, \cdots, i_k < j_k$  is a subset of
		$\{1,2, \cdots, n\}$ and the summation $\sum_{i_1 < j_1, \cdots, i_k < j_k}$ is   over all such distinct pairs; 
		\item[(ii)] The function $\Lambda_ {i_1, j_1, \cdots, i_k, j_k}
		\otimes_{m=1}^n p_\varepsilon(x_m-\cdot)$ is defined as the symmetrization of
		the function
		$$
		\prod_{m \in [1,n]\setminus\{i_1, j_1, \cdots, i_k, j_k\} }p_\epsilon
		(x_m -y_m)
		$$
		over the variables $\big(y_m; \hskip.05in
		m\in [1,n]\setminus\{i_1, j_1, \cdots, i_k, j_k\}\big)$, i.e.,
		$$
		\Lambda_ {i_1, j_1, \cdots, i_k, j_k}
		\otimes_{m=1}^n p_\varepsilon(x_m- y_m) 
		={1\over (n-2k)!}\sum_{\sigma}
		\prod_{m \in [1,n]\setminus\{i_1, j_1, \cdots, i_k, j_k\} }p_\epsilon
		(x_m -y_{\sigma(m)} )
		$$
		where the summation is over all permutations $\sigma$ on 
		$[1,n]\setminus\{i_1, j_1, \cdots, i_k, j_k\}$. When $k=0$,
		in particular, we follow the natural convention that
		\[
		\Lambda_ {i_1, j_1, \cdots, i_k, j_k}\otimes_{m=1}^n p_\varepsilon(x_m-y_m)={1\over n!}\sum_{\sigma\in\Sigma_n}\prod_{m=1}^n 
		p_\varepsilon(x_m-y_{\sigma(m)})\,, 
		\]
		where $\Sigma_n$ is the permutation group on $\{1,\cdots, n\}$.
		\item[(iii)]  $I_{n-2k}(\cdots)$ is  the  multiple  It\^o-Wiener (It\^o-Skorohod) integral 
		with the integration variables
		$\left\{y_m\,;\ m\in [1,n]\setminus\{i_1, j_1, \cdots, i_k, j_k\}\right\}$.
	\end{enumerate} 
	With the above chaos expansion 
	\eqref{e.6.4} we see that    the chaos expansion of
	the approximated Stratonovich integral is
	\begin{eqnarray}
		S_{n, \varepsilon}(f ) & =& \int_{(\R^d)^{n}}f (x_1,\cdots, x_{n})\bigg(\prod_{k=1}^{n}\dot{W}_{\varepsilon}(x_k)\bigg)
		dx_1\cdots dx_{n}\nonumber\\
		& 
		=&
		\sum_{k\le n/2} \sum_{i_1 < j_1, \cdots, i_k < j_k} \int_{(\R^d)^{n}}f (x_1,\cdots, x_{n}) \bigg(\prod_{\ell=1}^k \gamma_{2\varepsilon}(x_{i_\ell}-x_{j_\ell}) \bigg)\nonumber\\
		&&\qquad \qquad      \times I_{n-2k}\Big(\Lambda_ {i_1, j_1, \cdots, i_k, j_k} \otimes_{m=1}^n p_\varepsilon(x_m-\cdot)\Big)dx_1\cdots dx_n\,.
	\end{eqnarray}
	By the symmetry of $f $ on $x_1, \cdots, x_n$ 
	and with a combinatorial   analysis as in \cite{humeyer93} 
	the above equation can be written 
	\begin{eqnarray}
		%	&&\int_{(\R^d)^{2n}}f(x_1,\cdots x_{2n})\bigg(\prod_{k=1}^{2n}\dot{W}_{\varepsilon}(x_k)\bigg)
		%	dx_1\cdots dx_{2n}\nonumber\\
		S_{n, \varepsilon}(f )	 
		&=&
		\sum_{k\le n/2}  \frac{n!}{2^k k! (n-2k)!}  I_{n-2k}
		\left(  \int_{(\R^d)^{2n}}f(x_1,\cdots, x_{ n}) \prod_{\ell=1}^k \gamma_{2\varepsilon}(x_{2\ell-1}-x_{2\ell})
		\right. \nonumber\\
		&&\qquad \qquad  \qquad \qquad   \left.      \times \prod_{j={2k+1}}^n    p_\varepsilon(x_j-\cdot))dx_1\cdots dx_n\right) \,.
		\label{e.2.20} 
	\end{eqnarray}
	%where $I_{n-2k}$ is the  multiple  It\^o-Wiener (It\^o-Skorohod) integral 
	%with respect to the   variables in $y_{2k+1}, \cdots, y_n$.  
	
	Since the approximated multiple integral can be decomposed to finite sum of   multiple It\^o-Wiener integrals which are  orthogonal, we see that  the convergence in ${\cal L}^2(\Omega, {\cal F}, \P)$ of \eqref{M-6} is 	equivalent to that each of the multiple It\^o-Wiener integrals in \eqref{e.2.20} converges in ${\cal L}^2(\Omega, {\cal F}, \P)$. Thus, we have the following theorem which is used to justify    \eqref{M-16}. 
	\begin{theorem}\label{t.6.2} 
		Let $f \in {\cal H}^{\otimes n}$ be deterministic
		and symmetric.
		If the trace  
		\begin{eqnarray}
			\tr^k f (y_{2k+1}, \cdots, y_n) 
			&:= & \lim_{ \varepsilon\to 0}\int_{(\R^d)^{n}}f (x_1,\cdots, x_{ n})   \prod_{\ell=1}^k \gamma_{2\varepsilon}(x_{2\ell-1}-x_{2\ell})
			\nonumber\\
			&&\qquad  \times\prod_{j={2k+1}}^n    p_\varepsilon(x_j-y_j))dx_1\cdots dx_n\label{e.6.7} 
		\end{eqnarray} 
		exists in ${\cal H}^{\otimes (n-2k)}$  for all $k\le n/2$, then the Stratonovich 
		integral $S_n(f )$ exists as an ${\cal L}^2(\Omega, {\cal F}, \P)$ limit of 
		$S_{n, \varepsilon}(f )$ as $\varepsilon\to 0$ and we have the following Hu-Meyer formula: 
		\begin{equation}
			S_n(f )=\sum_{k\le n/2}  \frac{n!}{2^k k! (n-2k)!} 
			I_{n-2k}(\tr^k f )\,. \label{e.2.22} 
		\end{equation}
		Conversely,    if   
		$S_{n, \varepsilon}(f )$ is a Cauchy sequence in 
		${\cal L}^2(\Omega, {\cal F}, \P)$, then the right hand side of \eqref{e.6.7} is a Cauchy sequence in
		${\cal H}^{\otimes (n-2k)}$  for all $k\le n/2$, whose limit is denoted by the left hand side of 
		\eqref{e.6.7}  and $S_{n, \varepsilon}(f )$ converges to $S_n(f)$ defined by \eqref{e.2.22} in ${\cal L}^2(\Omega, {\cal F}, \P)$.  
		Moreover, if   
		$S_{n, \varepsilon}(f )$ converges to $S_n(f )$ in   ${\cal L}^2(\Omega, {\cal F}, \P) $,  then
		this convergence also takes place in 
		${\cal L}^p(\Omega, {\cal F}, \P)$  for any $p\in [1, \infty)$.  This means that
		$S_n(f )$ is in   ${\cal L}^p(\Omega, {\cal F}, \P)$  for any $p\in [1, \infty)$. 
	\end{theorem}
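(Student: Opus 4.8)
The plan is to read off everything from the finite Wiener-chaos decomposition \eqref{e.2.20}. Write
\[
\tr^k_\varepsilon f(y_{2k+1},\cdots,y_n)=\int_{(\R^d)^{n}}f(x_1,\cdots,x_n)\prod_{\ell=1}^k\gamma_{2\varepsilon}(x_{2\ell-1}-x_{2\ell})\prod_{j=2k+1}^n p_\varepsilon(x_j-y_j)\,dx_1\cdots dx_n,
\]
so that \eqref{e.2.20} reads $S_{n,\varepsilon}(f)=\sum_{k\le n/2}c_{n,k}\,I_{n-2k}(\tr^k_\varepsilon f)$ with $c_{n,k}=n!/(2^kk!(n-2k)!)$, and the trace \eqref{e.6.7} is precisely the statement $\tr^k_\varepsilon f\to \tr^k f$ in ${\cal H}^{\otimes(n-2k)}$ as $\varepsilon\to 0$. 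First I would record that each $\tr^k_\varepsilon f$ is symmetric in its free variables $y_{2k+1},\cdots,y_n$: permuting these amounts to permuting $x_{2k+1},\cdots,x_n$ in the integrand, which leaves $f$ invariant by hypothesis. Hence the symmetrization built into $I_{n-2k}$ is harmless, and the It\^o--Wiener isometry $\E\big[I_m(g)\big]^2=m!\,\|g\|^2_{{\cal H}^{\otimes m}}$ (for symmetric $g$) applies directly to each kernel.

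The engine of the argument is the orthogonality of multiple It\^o--Wiener integrals of distinct orders. Since the orders $n-2k$ are distinct for distinct $k$, applying it to \eqref{e.2.20} gives, for all $\varepsilon,\varepsilon'>0$,
\[
\E\big[S_{n,\varepsilon}(f)-S_{n,\varepsilon'}(f)\big]^2=\sum_{k\le n/2}c_{n,k}^2\,(n-2k)!\,\big\|\tr^k_\varepsilon f-\tr^k_{\varepsilon'}f\big\|^2_{{\cal H}^{\otimes(n-2k)}}.
\]
Being a finite sum of nonnegative terms, the left side is Cauchy as $\varepsilon\to 0$ if and only if each family $\{\tr^k_\varepsilon f\}_\varepsilon$ is Cauchy in ${\cal H}^{\otimes(n-2k)}$. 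This single identity yields both directions. If the traces \eqref{e.6.7} exist, each summand converges, so $S_{n,\varepsilon}(f)$ converges in ${\cal L}^2$; continuity of $I_{n-2k}$ under the isometry passes to the limit and produces the Hu--Meyer formula \eqref{e.2.22}. Conversely, if $S_{n,\varepsilon}(f)$ is ${\cal L}^2$-Cauchy, then each $\{\tr^k_\varepsilon f\}_\varepsilon$ is Cauchy in the complete space ${\cal H}^{\otimes(n-2k)}$ and hence converges to a limit, which we name $\tr^k f$; this is exactly the assertion that \eqref{e.6.7} defines an element of ${\cal H}^{\otimes(n-2k)}$, and \eqref{e.2.22} follows as before.

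For the final ${\cal L}^p$ claim I would invoke hypercontractivity of Gaussian chaos (Nelson). Both $S_{n,\varepsilon}(f)$ and $S_n(f)$ belong to the sum of the first $n$ Wiener chaoses, a space on which the ${\cal L}^p$ and ${\cal L}^2$ norms are comparable: $\|X\|_p\le (p-1)^{n/2}\|X\|_2$ for $p\ge 2$, while $\|X\|_p\le\|X\|_2$ for $1\le p<2$ by Jensen's inequality. Applying this to $X=S_{n,\varepsilon}(f)-S_n(f)$ upgrades the ${\cal L}^2$ convergence to ${\cal L}^p$ convergence for every $p\in[1,\infty)$, whence $S_n(f)\in{\cal L}^p(\Omega,{\cal F},\P)$.

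The computations are entirely routine once \eqref{e.2.20} is in hand; the only point demanding care is the orthogonality reduction, namely the equivalence between the ${\cal L}^2$-Cauchy property of the whole sum and that of each chaos component. The genuine difficulty of the theorem---deriving the chaos expansion \eqref{e.2.20} for products of smoothed noises---has already been dispatched earlier in this section along the lines of \cite{hubook,humeyer93}, so the main obstacle in what remains is merely keeping track of the symmetry of the traces and of the isometry constants $(n-2k)!$ correctly.
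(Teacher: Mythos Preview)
Your proof is correct and follows essentially the same approach as the paper's: both read off the equivalence from the chaos expansion \eqref{e.2.20} via orthogonality of the It\^o--Wiener integrals of different orders, and both invoke hypercontractivity for the ${\cal L}^p$ upgrade. The only cosmetic difference is that the paper phrases the hypercontractivity step through the second quantization operator $\Gamma(\alpha)$ with $\alpha=\sqrt{1/(p-1)}$, whereas you use the equivalent direct Nelson bound $\|X\|_p\le (p-1)^{n/2}\|X\|_2$ on the first $n$ chaoses.
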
 
	\begin{remark}\label{r.6.3}
		It is obvious that if $  f $ is the symmetrization 
		of $\hat f $,  then by the above definition it is easy to verify that
		$S_n(f )=S_n(\hat f )$.  
	\end{remark} 
	\begin{proof}[Proof of the theorem] 
		Denote 
		\[
		g_{n,k, \varepsilon}(y_{2k+1}, \cdots, y_n):=	\int_{(\R^d)^{2n}}f (x_1,\cdots, x_{ n})   \prod_{\ell=1}^k \gamma_{2\varepsilon}(x_{2\ell-1}-x_{2\ell}) \prod_{j={2k+1}}^n    p_\varepsilon(x_j-y_j))dx_1\cdots dx_n\,. 
		\]
		Equation \eqref{e.6.7} means $\|g_{n,k, \varepsilon}-\tr ^{k} f \|_{{\cal H}^{\otimes (n-2k}}\to 0$ as $\varepsilon\to 0$.
		By the It\^o isometry,
		\begin{equation}
			\begin{split}
				\E|I_{n-2k}(g_{n,k, \varepsilon} )
				&-I_{n-2k}(\tr^k f )|^2=
				\E|I_{n-2k}(g_{n,k, \varepsilon}-\tr^k f )|^2
				\\
				=&(n-2k)! \|g_{n,k, \varepsilon}-\tr^k f \|_
				{{\cal H}^{\otimes (n-2k)}}^2\to 0   
			\end{split} \label{e.6.9}
		\end{equation}
		by \eqref{e.6.7}.  
		Equation \eqref{e.2.20}  tells   that 
		$S_{n,\varepsilon}(f )$ converges to
		$S_n(f )$  given by  \eqref{e.2.22}.  
		
		Now we assume that  $	S_{n, \varepsilon}(f )$ is a Cauchy sequence in ${\cal L}^2(\Omega, {\cal F}, \P)$.
		With our notation $g_{n,k, \varepsilon} $  we can write
		\begin{eqnarray*} 
			S_{n, \varepsilon}(f )	 
			&=&
			\sum_{k\le n/2}  \frac{n!}{2^k k! (n-2k)!}  I_{n-2k}
			\left(  g_{n,k, \varepsilon}  \right) \,. 
		\end{eqnarray*}
		Thus, by the orthogonality of multiple It\^o-Wiener integrals, 
		\[
		\begin{split} 
			\E\left[ S_{n, \varepsilon}(f )	-S_{n, \varepsilon'}(f )	\right]^2= &
			\sum_{k\le n/2}  \left(\frac{n!}{2^k k! (n-2k)!}\right)^2
			\E \left[   I_{n-2k}
			\left(  g_{n,k, \varepsilon}   \right)
			-I_{n-2k}
			\left(  g_{n,k, \varepsilon'}   \right)\right]^2\\
			= &
			\sum_{k\le n/2}  \left(\frac{n!}{2^k k! (n-2k)!}\right)^2
			(n-2k)! \|  g_{n,k, \varepsilon}   
			-  g_{n,k, \varepsilon'}   \|_{{\cal H}^{\otimes (n-2k)}}^2\,. 
		\end{split}
		\]
		This can be used to prove  the second part of the theorem easily. 
		
		Recall that if $F=\sum_{n=0}^\infty I_n(f_n)$ is the chaos expansion of $F$, then 
		the   second quantization operator 
		(e.g. \cite{hubook}) of a number $\alpha\in [-1, 1]$
		is defined as
		\[
		\Gamma(\alpha) F=\sum_{n=0}^\infty \alpha^n I_n(f_n)\,. 
		\] 
		Now for any $p>2$, let   $\alpha=\sqrt{\frac{1}{p-1}}$ and let  
		\begin{eqnarray} 
			F_{t, n,  \varepsilon}
			&=&
			\sum_{k\le n/2}  (1/\alpha)^{n-2k} \frac{n!}{2^k k! (n-2k)!} \left[ I_{n-2k}
			\left( g_{n,k, \varepsilon}\right) -	I_{n-2k}(\tr^k f )\right]\,.   
		\end{eqnarray}  
		Then
		by the hypercontractivity inequality (e.g. \cite[p. 54, Theorem 3.20]{hubook}, we have
		\[
		\begin{split}
			\left(\E |	S_{n, \varepsilon}(f )	\right.
			&\left. - 	S_n(f )|^p\right)^{1/p}
			=  \left(\E |\Gamma(\alpha)	F_{t, n,  \varepsilon}|^p\right)^{1/p} \le \left(\E | 	F_{t, n,  \varepsilon}|^2\right)^{1/2} \\
			= &
			\left( \E| \sum_{k\le n/2}  (1/\alpha)^{n-2k} \frac{n!}{2^k k! (n-2k)!} \left[ I_{n-2k}
			\left( g_{n,k, \varepsilon}\right) -	I_{n-2k}(\tr^k f )\right]|^2\right)^{1/2} \\
			\le &
			\left(  \sum_{k\le n/2}  (1/\alpha)^{2n-4k} \frac{(n!)^2}{2^{2k} (k!)^2 ((n-2k)!)^2 } 
			\E \left[ I_{n-2k}
			\left( g_{n,k, \varepsilon}\right) -	I_{n-2k}(\tr^k f )\right] ^2\right)^{1/2}    \,, 
		\end{split}
		\]
		which converges to $0$ by \eqref{e.6.9}.  
		This proves the theorem. 
		%	The first statement of the proposition 
		%	and \eqref{e.2.22} 
		%	follows from \eqref{e.2.20}. Since $S_{n, \varepsilon}(f_n)$
		%	is decomposed to a sum of finite chaos terms, the hypercontractivity theorem implies the convergence in any $L^p$.  
	\end{proof}

	%%
	%%%%%%%%%%%%%%%%%%%%%%%%%%%%%%%%%%%%%%%%%%%%%%%%%%%%%%%%%%%%%
	
	%% if your bibliography is in bibtex format, uncomment commands:
	%\bibliographystyle{imsart-number} % Style BST file (imsart-number.bst or imsart-nameyear.bst)
	%\bibliography{bibliography}       % Bibliography file (usually '*.bib')
	
	%% or include bibliography directly:

	\vskip 1.in
	
	\begin{tabular}{lll}
		Xia Chen  \\
		Department of Mathematics  \\
		University of Tennessee \\
		Knoxville TN 37996, USA  \\
		{\tt xchen@math.utk.edu} 
	\end{tabular}
	\hskip 18mm \begin{tabular}{lll}
		Yaozhong Hu  \\
		Department of Math and  Stat Sciences  \\
		University of Alberta at Edmonton \\
		Edmonton, Canada,  T6G 2G1  \\
		{\tt yaozhong@ualberta.ca} 
	\end{tabular}

\end{document}